\newtheorem{theorem}{Theorem}[section]
\newtheorem{lemma}[theorem]{Lemma}
\newtheorem{proposition}[theorem]{Proposition}
\newtheorem{corollary}[theorem]{Corollary}
\theoremstyle{definition}
\newtheorem{definition}[theorem]{Definition}
\newtheorem{remark}[theorem]{Remark}
\numberwithin{equation}{section}
\newcommand{\eps}{{\varepsilon}}
\newcommand{\ra}{\right\rangle}
\newcommand{\la}{\left\langle}
\newcommand{\de}{\partial}
\newcommand{\ph}{\varphi}
\newcommand{\Lip}{{\rm {Lip}}}
\newcommand{\Id}{{\rm {Id}}}
\newcommand{\dist}{{\rm {dist}}}
\newcommand{\diam}{{\rm {diam}}}
\newcommand{\cA}{{\mathcal{A}}}
\newcommand{\cI}{{\mathcal{I}}}
\newcommand{\cF}{{\mathcal{F}}}
\newcommand{\cG}{{\mathcal{G}}}
\newcommand{\cH}{{\mathcal{H}}}
\newcommand{\cL}{{\mathcal{L}}}
\newcommand{\cN}{{\mathcal{N}}}
\newcommand{\cS}{{\mathcal{S}}}
\newcommand{\cX}{{\textup{Lip}_{1}}}
\newcommand\R{{\mathbb R}}
\newcommand\N{{\mathbb N}}
\newcommand\Z{{\mathbb Z}}
\newcommand\Q{{\mathbb Q}}
\DeclareMathOperator*{\esssup}{ess\,sup}
\newcommand\sS{\mathbb{S}}
\title{Equidimensional isometric maps}
\author[B. Kirchheim, E. Spadaro, L. Sz\'ekelyhidi Jr.]{Bernd Kirchheim, Emanuele Spadaro, L\'aszl\'o Sz\'ekelyhidi Jr.}
\date{}
\begin{document}
\begin{abstract}
In Gromov's treatise (\textit{Partial differential relations}, volume 9 of \textit{Ergebnisse der Mathematik und ihrer Grenzgebiete (3)}, 1986),
a continuous map between Riemannian manifolds is called isometric if it preserves the length of rectifiable curves.
In this note we develop a method using the Baire category theorem for constructing such isometries. We show that a typical $1$-Lipschitz map is isometric in canonically formulated extension and restriction problems.
\end{abstract}

\maketitle

\tableofcontents

\section{Introduction}
Since the fundamental works of Nash \cite{Nash} and Kuiper \cite{Kuiper} it is well known that isometric maps with low regularity can be surprisingly flexible objects. In particular, any short immersion of an $n$-dimensional Riemannian manifold with continuous metric into $\R^{n+1}$ can be uniformly approximated by isometric immersions of class $C^1$.
One of the main ideas introduced by Nash, and revisited by Kuiper, is an iterative scheme, whereby in each stage the short map is perturbed by a rapidly oscillating ``corrugation'' (or ``spiral'' in higher codimensions) such that the resulting maps converge in $C^1$ to an isometric immersion.

On the contrary, in the equidimensional case, that is, for maps from a $n$-dimensional manifold into $\R^n$, isometries of class $C^1$ are \textit{rigid}.
Namely, if $f:\R^n\to\R^n$ is a $C^1$ map with $Df \in O(n)$ for every $x \in \R^n$, then $f$ is globally orientation preserving or reversing and, by a classical Liouville theorem, is an affine map, i.e.~a rigid motion.

Therefore, in order to see some flexibility, one needs to relax the $C^1$ condition. A natural choice is to consider 
Lipschitz maps instead. To fix ideas consider maps
$f:\R^n\to\R^n$.
There are several ways in which one can define what it means to be an isometry: either look at changes in the metric under $f$ (a local condition), or look at the effect on the length of curves (a global condition). For $f\in C^1$ 
the two conditions lead to the same notion - this can be seen as a simple example of the local-to-global principle in geometry. If $f$ is merely Lipschitz, by Rademacher's theorem the derivative $Df(x)$ exists for almost every $x\in\R^n$, hence a weak preservation of the metric amounts to the condition 
 \begin{equation}\label{e:weakisometry}
(Df)^T \,Df=\Id\quad\cL^n\textrm{- a.e.~in } \R^n.
\end{equation}
Here we denote by $\cL^n$ the Lebesgue measure on $\R^n$. We will call such maps \emph{weak isometries}. As pointed out by Gromov on p.~218 of his treatise \cite{GromovBook}, such maps might collapse whole submanifolds to a single point and thus are very far from a truly geometric notion of isometry. For instance, it is possible to solve the Dirichlet problem $Df^T \,Df=\Id$ a.e.~in $\Omega=[0,1]^n$ and $f\vert_{\de \Omega} =0$ -- see e.g.~\cite{Cellina:1995uv,DM97}. By extending $f$ periodically on the whole $\R^n$, one can then find a solutions to \eqref{e:weakisometry} such that $f(\R^{n-1}\times\{0\})=\{0\}$.

The more geometric definition of isometry therefore is the following: a Lipschitz map between Riemannian manifolds $f:M\to N$ is \emph{isometric} if it preserves the length of any rectifiable curve (c.f. \cite[\S 2.4.10]{GromovBook}):
\begin{equation}\label{e:isometry}
\ell_M(\gamma) = \ell_N(f\circ\,\gamma) \quad \text{for every }\;\gamma:[0,1] \to M\; \text{rectifiable}.
\end{equation}
It is not difficult to see that any isometry is a weak isometry, but the converse is in general false. To compare with \eqref{e:weakisometry}, notice that an isometric map $f:\R^n\to \R^n$ satisfies 
\begin{equation}\label{e:diff-inclusion}
(D^M f)^T \, D^Mf = \Id \quad \cH^m\text{-a.e.~on } M,
\end{equation}
for every $m$-dimensional submanifold $M\subset\R^n$, $m=1,\ldots, n$, where $D^M$ denotes the tangential derivative and $\cH^m$ is the $m$-dimensional Hausdorff measure. Actually, it is not difficult to see that in condition \eqref{e:diff-inclusion} it suffices to check the lowest dimensional case $m=1$, i.e.
\begin{equation}\label{e:diff-inclusion1}
|\nabla_\tau f| = 1 \quad \cH^1\text{-a.e.~on }\gamma
\end{equation}
for every rectifiable curve $\gamma\subset \R^n$, where $\nabla_\tau f$ denotes the tangential derivative.

\medskip

For constructing isometries one might imagine a ``folding up'' pattern as the analogous perturbations to corrugations in an iterative scheme \textit{\`a la} Nash and prove results similar in spirit to the Nash-Kuiper theorem.
Indeed, in \cite{GromovBook} Gromov shows that every strictly short map between Riemannian manifolds admits an arbitrarily close uniform approximation by isometries. 
More generally, Gromov's convex integration is a powerful generalization of the Nash technique, that applies to a large class of differential relations. A version for differential inclusions of Lipschitz maps has been developed in 
\cite{MS96,MS99}, where also the system \eqref{e:weakisometry} is treated as a particular case. 

On the other hand it was noticed by several authors \cite{Cellina:2005tx, DM97,Kirchheim}, that the Baire category method, introduced in \cite{DeBlasi:1982tk,BressanFlores} for ordinary differential inclusions, can be applied to problems 
such as \eqref{e:weakisometry} (which can be written as the differential inclusion $Du(x)\in O(n)$ a.e.~$x$). 
This approach leads not only to the density of weak isometries but also to genericity in the sense of Baire category. 

\medskip

Our contribution in this paper is twofold. First of all we develop a version of the Baire category method for isometric maps satisfying \eqref{e:isometry} in the sense considered by Gromov and prove several residuality results. Our method allows one to reduce the problem of Baire-residuality to the density of certain approximate isometries, see \S~\ref{s.approx} below. 

Secondly, we give a self-contained proof of the density of (approximate) isometries that follows the general philosophy of Baire category techniques for differential inclusions. To explain this, recall that the density of Lipschitz isometries between Riemannian manifolds follows from Gromov's result \cite[\S 2.4.11]{GromovBook} concerning the fine approximability of isometries. Alternatively, in $\R^n$ one can use the following result of Brehm \cite{Brehm:1981dg} concerning the extension of isometries:

\begin{theorem}[Brehm \cite{Brehm:1981dg}]\label{t.brehm}
Let $H\subset\R^n$ be a finite set and $f:H\to \R^m$ be a short map, with $n\leq m$. Then, there exists an extension of $f$ to a piecewise affine isometric map of the whole $\R^n$.
\end{theorem}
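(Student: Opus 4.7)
The plan is to argue by induction on the cardinality $|H|$. The base cases $|H| \le 1$ are handled trivially by taking $F$ to be an affine isometry (a translation in the case $|H|=1$).

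For the inductive step, fix $x_0 \in H$ and set $H_0 := H \setminus \{x_0\}$. By the inductive hypothesis, there exists a piecewise affine isometric extension $F_0 \colon \R^n \to \R^m$ of $f|_{H_0}$. Writing $p := F_0(x_0)$ and $q := f(x_0)$, the task reduces to modifying $F_0$ into a new piecewise affine isometric map $F$ which still extends $f|_{H_0}$ but additionally satisfies $F(x_0) = q$.

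The natural mechanism is to post-compose $F_0$ with a piecewise affine isometric self-map $\Phi \colon \R^m \to \R^m$ that fixes the finite set $F_0(H_0)$ pointwise and sends $p$ to $q$; the composition $F := \Phi \circ F_0$, after refining the domain triangulation along the preimage of $\Phi$'s singular set, would then be the sought extension. The elementary building block is a \emph{fold} across a hyperplane $\Sigma \subset \R^m$: the continuous map equal to the identity on one closed half-space determined by $\Sigma$ and to the reflection $\sigma_\Sigma$ on the other. Folds manifestly preserve rectifiable-curve length. A single fold across the perpendicular bisector of $p$ and $q$ would resolve the inductive step, provided $F_0(H_0)$ lies entirely on the $q$-side of $\Sigma$, equivalently, provided $|q - f(x)| \le |p - f(x)|$ for every $x \in H_0$.

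Shortness of both $f$ and $F_0$ guarantees only that $|q - f(x)|$ and $|p - f(x)|$ are individually bounded by $|x_0 - x|$, without the required ordering; hence the heart of the argument is a \emph{reachability lemma}: for any $p, q$ in the admissible region $R := \bigcap_{x \in H_0} \overline{B}(f(x), |x_0 - x|)$, there is a finite composition of folds in $\R^m$, each pointwise fixing $F_0(H_0)$, that sends $p$ to $q$. The region $R$ is convex as an intersection of closed balls and contains both $p$ and $q$, so I would try to interpolate along the segment $[p,q]$ by a sequence of small fold steps. The main obstacle is that every fold whose hyperplane strictly separates $F_0(H_0)$ from the current position of $p$ can only weakly decrease all the distances $|p - f(x)|$; realising an arbitrary displacement within $R$ therefore forces one to allow hyperplanes passing through some of the points of $F_0(H_0)$, and to exploit the directional freedom supplied by the codimension $m - n \ge 0$. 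Handling the borderline case where $p$ or $q$ lies on $\partial R$ (so one of the shortness inequalities is saturated) would be dealt with by a limiting argument. Once the reachability lemma is established, the inductive step --- and hence the theorem --- follows.
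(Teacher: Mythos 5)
The paper states Theorem~\ref{t.brehm} as a citation to Brehm's 1981 article without giving a proof, so there is no internal argument to compare against; I therefore evaluate your proposal on its own terms.

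Your inductive step has a genuine gap: the reachability lemma it hinges on is false, and the obstruction you notice along the way is in fact fatal rather than technical. Any piecewise affine isometric self-map $\Phi$ of $\R^m$ --- fold, composition of folds, or otherwise --- is $1$-Lipschitz, since for the segment $\gamma$ from $a$ to $b$ one has $|\Phi(a)-\Phi(b)|\le\ell(\Phi\circ\gamma)=\ell(\gamma)=|a-b|$. If moreover $\Phi$ fixes $F_0(H_0)$ pointwise, then $|\Phi(p)-f(x)|\le |p-f(x)|$ for every $x\in H_0$. Hence $p$ can be carried to $q$ by post-composition only if $|q-f(x)|\le |p-f(x)|$ for \emph{all} $x\in H_0$. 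Membership of $p$ and $q$ in the convex region $R$ gives nothing of the sort: $R$ is not ordered by ``closer to all centres,'' and the inductive hypothesis gives you no control over where $p=F_0(x_0)$ lies inside $R$.

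A concrete counterexample with $m=n=1$: take $H=\{0,\,0.5\}$, $f(0)=0$, $f(0.5)=0.5$, and $x_0=0.5$, so $H_0=\{0\}$ and $q=0.5$. The inductive hypothesis applied to $f|_{H_0}$ may return $F_0(x)=0.25-|x-0.25|$, a perfectly valid piecewise affine isometry of $\R$ with $F_0(0)=0$, but with $p=F_0(0.5)=0$. No $1$-Lipschitz self-map of $\R$ can fix $0$ and simultaneously send $0$ to $0.5$. Since this already occurs at $m=n$, the ``directional freedom from codimension'' you invoke cannot help; the obstruction is a hard Lipschitz bound, not a shortage of degrees of freedom, and it persists for hyperplanes passing through points of $F_0(H_0)$ as well. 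Repairing the argument requires either building into the inductive hypothesis control over the image of the new point (which a bare existence statement does not give), or a mechanism that can genuinely \emph{increase} $|F(x_0)-f(x)|$ for some $x\in H_0$ relative to $F_0$ --- something post-composition with isometries can never do. This is precisely why Brehm's construction, and Gromov's notion of normally folded maps, build the extension globally from scratch rather than by correcting a previously fixed extension via post-composed folds.
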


Both Gromov's and Brehm's proof rely on the (global) geometric property of being an isometry, in particular special piecewise affine isometries (called normally folded maps in \cite{GromovBook}) are used as the basic building block and it is not clear how to generalize this notion to other differential inclusions. In contrast, our approach is to treat isometries as solutions to a \emph{fine differential inclusion} as in \eqref{e:diff-inclusion}, where the tangential derivative on lower-dimensional objects is prescribed. As in the usual Baire category method, we use an explicit oscillating perturbation to show the perturbation property for the (tangential) gradient of the map $f$. The new key point however is to use a calibration to control the underlying curves. We expect our method to find applicability in a more general class of such fine differential inclusions.

To conclude this introduction we mention that there is yet another, stronger notion of isometry. In \cite[\S 2.4.10]{GromovBook} a map $f:M\to N$ between Riemannian manifolds is called a {\it strong isometry} if for any $x,y\in M$
$$
\dist_M(x,y)=\lim_{\eps\to 0}\inf\Bigl\{\sum_{i=0}^{k-1}\dist_N(f(x_i),f(x_{i+1}))\Bigr\},
$$
where the infimum is taken over all $\eps$-chains between $x$ and $y$, that is, sequence of points $x_0=x,\,x_1,\dots,x_k=y$ with $\dist_M(x_i,x_{i+1})\leq \eps$. The same notion is called an {\it intrinsic isometry} in \cite{Petrunin}.
It is not difficult to see that a strong isometry is an isometry. Moreover, strong isometries preserve the length of any curve (not just rectifiable). Now, using Gromov's theorem (or our Theorem \ref{t.extension} below) it is possible to construct an isometry $f:\R^2\to\R^2$, which maps the Koch curve (or any purely unrectifiable curve) to a single point. Such a map will obviously not be a strong isometry. We note in passing that in \cite[\S 2.4.10]{GromovBook} this construction is described with a curve $C$ with the property that $\dim_H(C\cap C_0)<1$ for all rectifiable curves $C_0$. This property is stronger than being purely unrectifiable, and in fact it turns out that such a curve $C$ does not exist - see \cite{BalkaHarangi}.
Our main results and techniques in this paper, in particular in \S \ref{s:density}, do not extend to strong isometries.

\section*{Acknowledgments}
We would like to express our thanks to Giovanni Alberti, to whom we are indebted for many
fruitful and inspiring discussions concerning this work.

\section{Statement of the main results}

We first consider the problem of extending a map defined on an arbitrary compact set $K\subset\R^n$. This is a generalization of the Dirichlet problem on a bounded domain $\Omega\subset\R^n$, if we take $K = \de\Omega$.

It is clear that an isometric extension need not always exist.
For example, consider the following map: $K = \de [0,1]^2 \subset\R^2$ and $f:K\to\R^2$ given by $f(x,y) = (x,0)$.
Clearly, $f$ is a short map admitting a unique $1$-Lipschitz extension to $[0,1]^2
$ (namely $f(x,y) = (x,0)$), which is not an isometric map because, for instance, vertical line segments are mapped to single points.

In order to deal with this issue, we need to characterize the set $C(f,K)$ where the map $f$ has a unique $1$-Lipschitz extension. It is clear that $f$ extends uniquely as a $1$-Lipschitz map on the set
\[
C(f,K) := \bigcup_{H\in \cS} \textup{conv}(H),
\]
where $\cS := \{ H\subset K : f\vert_H \;\textup{is an affine isometry}\}$. As seen in the example above, if the unique $1$-Lipschitz extension on $C(f,K)$ is not isometric, there is no chance to solve the extension problem. On the other hand, if $C(f,K)=K$, the map $f$ does admit extensions which are locally strictly short outside $K$. This is the content of the 
following proposition:
\begin{proposition}\label{p.LSSE0}
A function $f:K\to\R^n$ admits an $1$-Lipschitz extension $h:\R^n\to\R^n$ such that
\begin{itemize}
\item $h\vert_K=f$;
\item $\Lip(h\vert_A) <1$ for every $A \subset\subset \R^n \setminus K$
\end{itemize}
if and only if 
\begin{equation}\label{e:C=K}
C(f,K)=K.
\end{equation} 
Moreover, \eqref{e:C=K} is a generic property in the sense that a typical $1$-Lipschitz map $f:K\to\R^n$ satisfies it.
\end{proposition}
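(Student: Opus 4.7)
The plan is to prove necessity (``$\Rightarrow$''), sufficiency (``$\Leftarrow$''), and the generic property as three separate steps.

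\textbf{Necessity.} Assume such an $h$ exists. I would show $C(f,K)\subset K$; the reverse inclusion is immediate, since each singleton $\{x\}\subset K$ belongs to $\cS$. Fix $H\in\cS$ and let $\tilde f\colon\R^n\to\R^n$ be the global affine isometry extending $f|_H$. For any finite convex combination $x=\sum_i\lambda_i h_i$ with $h_i\in H$, the variance decomposition
\[
\sum_i\lambda_i|h(x)-f(h_i)|^2=|h(x)-\tilde f(x)|^2+\sum_i\lambda_i|f(h_i)-\tilde f(x)|^2,
\]
the $1$-Lipschitz bound $\sum_i\lambda_i|h(x)-f(h_i)|^2\le\sum_i\lambda_i|x-h_i|^2$, and the identity $\sum_i\lambda_i|f(h_i)-\tilde f(x)|^2=\sum_i\lambda_i|h_i-x|^2$ (since variance depends only on pairwise distances, which $f|_H$ preserves) together force $h(x)=\tilde f(x)$. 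Hence $h|_{\textup{conv}(H)}$ is an affine isometry. If some $x_0\in\textup{conv}(H)\setminus K$, compactness of $K$ provides a compact $A\ni x_0$ with $A\subset\R^n\setminus K$ whose intersection with $\textup{conv}(H)$ contains a nontrivial segment through $x_0$; on this segment $h$ is an affine isometry, so $\Lip(h|_A)=1$, contradicting strict shortness.

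\textbf{Sufficiency.} Assume $C(f,K)=K$. The converse of the variance argument above identifies this hypothesis with the statement that
\[
V(x):=\bigcap_{y\in K}\overline B(f(y),|x-y|),
\]
which by Kirszbraun's theorem coincides with the range of values attainable at $x$ by $1$-Lipschitz extensions of $f$, is a compact convex set of positive diameter for every $x\in\R^n\setminus K$. I would then carry out a Baire category argument in the compact metric space $X:=\{h\in\Lip_1(\overline B_R;\R^n):h|_K=f\}$ equipped with sup distance (for $R$ with $\overline B_R\supset K$), showing that the set of $h\in X$ which are strictly short on every member of a countable exhausting family of compact subsets of $\overline B_R\setminus K$ is non-empty. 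The density step exploits the non-degeneracy of $V(x)$: given $h\in X$ and a target compact $A\subset\overline B_R\setminus K$, produce a nearby $\tilde h\in X$ which is strictly short on $A$ by convex-combining $h$ with a different Kirszbraun extension disagreeing with $h$ at some point of $A$. The principal obstacle is the lower semicontinuity of the Lipschitz-constant functional under sup convergence, which forces one to express the ``strictly short'' sets as unions of closed sublevel sets $\{h:\Lip(h|_A)\le 1-\epsilon\}$ and to prove a density statement for each $\epsilon>0$.

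\textbf{Genericity.} Consider the complete metric space $\Lip_1(K;\R^n)$ under sup distance. The bad set $B:=\{f:C(f,K)\supsetneq K\}$ decomposes as $B=\bigcup_{n\ge 1} B_n$ with
\[
B_n:=\{f:\exists\text{ finite }H\subset K,\ f|_H\text{ is an affine isometry},\ \textup{conv}(H)\cap\{\dist(\cdot,K)\ge 1/n\}\ne\emptyset\}.
\]
Each $B_n$ is closed: a convergent sequence $f_k\to f$ in $B_n$ with witnesses $(H_k,x_k)$ of bounded cardinality $|H_k|\le n+1$ admits, by compactness of $K^{n+1}$ and of the standard simplex, a subsequential limit $(H,x_0)$ still satisfying the closed conditions (isometry of $f|_H$ and $\dist(x_0,K)\ge 1/n$). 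Each $B_n$ is nowhere dense: the strict contraction $f_c:=cf$ for $c\in(0,1)$ satisfies $|f_c(h)-f_c(h')|=c|f(h)-f(h')|<|h-h'|$ for all distinct $h,h'\in K$, so $f_c|_H$ is never affine isometric on $|H|\ge 2$ points, placing $f_c$ outside $B_n$; since $f_c\to f$ uniformly as $c\to 1$, $B_n$ has empty interior. By Baire's theorem $B$ is meagre, so $\{f:C(f,K)=K\}=B^c$ is residual.
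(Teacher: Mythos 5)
Your proof splits into necessity, sufficiency, and genericity. The necessity and genericity steps are essentially correct; the sufficiency step has a genuine gap at exactly the point where the paper invests most of its effort.

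\textbf{Necessity.} Correct. The variance (parallel-axis) identity
\[
\sum_i\lambda_i|h(x)-f(h_i)|^2=|h(x)-\tilde f(x)|^2+\sum_i\lambda_i|f(h_i)-\tilde f(x)|^2,
\]
combined with $\Lip(h)\le 1$ and the fact that $\tilde f$ is an isometry, does pin down $h(x)=\tilde f(x)$ for all $x\in\textup{conv}(H)$, and the rest of your contradiction is sound. The paper dismisses this direction in one word (``clearly''), so your elaboration is welcome.

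\textbf{Sufficiency.} Here lies the gap. You claim that ``the converse of the variance argument above identifies this hypothesis with the statement that $V(x)$ is a compact convex set of positive diameter.'' The variance argument establishes only the implication $x\in C(f,K)\Rightarrow V(x)$ is a singleton; its contrapositive says that $V(x)$ having positive diameter implies $x\notin C(f,K)$. What you actually need is the \emph{converse}: that $x\notin C(f,K)$ forces $V(x)$ to have positive diameter. This is genuinely nontrivial and is the content of the implication (d)$\Rightarrow$(c) in the paper's Proposition~\ref{p.lsse condition}. The paper proves it by contradiction: assuming a unique extension value $p_x$ at some $x\notin C(f,K)$, it forms the contact set $H=\{y\in K:|f(y)-p_x|=|y-x|\}$ and splits into two cases, $p_x\notin\textup{conv}(f(H))$ (settled by a separating hyperplane perturbation) and $p_x\in\textup{conv}(f(H))$ (settled by a minimality/Carath\'eodory argument that manufactures a strictly short extension of $f\vert_{H'}$). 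None of this is an ``inversion'' of the variance identity, and your proof cannot proceed without it. Moreover, the Baire argument you sketch for producing the actual extension is not viable as stated: the sets $\{h:\Lip(h\vert_A)<1\}$ are $F_\sigma$, not open, in sup metric (the Lipschitz functional is lower semicontinuous, so its \emph{non-strict} sublevel sets are closed; the strict ones are neither open nor closed in general), so they are not suitable terms for a residual intersection. Once positive diameter of $V(x)$ is known, the construction is in fact direct (as in the paper): choose $p_x$ with strict inequality in \eqref{e.point ext}, build maps $f_x$ locally constant near $x$, Kirszbraun-extend to $F_x$, and set $h=\sum_i 2^{-i}F_{x_i}$ for a cover of $\R^n\setminus K$ by the balls $B_{\delta_{x_i}}(x_i)$; near any $x_i$ the $i$-th summand contracts to a point while the others are $1$-Lipschitz, so $h$ is locally strictly short.

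\textbf{Genericity.} Correct, and a genuinely different route from the paper's. You work directly with the combinatorial condition $C(f,K)\supsetneq K$, decompose the bad set into closed sets $B_n$ (a witness of bounded cardinality by Carath\'eodory together with $\textup{conv}(H)$ meeting $\{\dist(\cdot,K)\ge 1/n\}$), and use the strict contractions $cf$, $c<1$, to avoid every $B_n$. The paper's Proposition~\ref{p.LSSE} instead constructs open dense sets $\cG_\eps$ defined via existence of a $<1$-Lipschitz map $h$ on $\R^n\setminus K_\eps$ compatible with $f$; this simultaneously produces the LSSE extension and proves genericity, whereas your argument proves genericity of the condition $C(f,K)=K$ alone. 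Your route is cleaner as a standalone Baire argument, at the cost of needing the equivalence of the two properties to be already established, which is precisely where your sufficiency gap sits.
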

The proof of Proposition \ref{p.LSSE0} (restated as Proposition \ref{p.lsse condition} and \ref{p.LSSE}) is contained in Sections \ref{s:LSSE} and \ref{s:restriction}.
As a consequence, we prove that the solutions to the Dirichlet problem which are isometric in $\R^n\setminus C(f,K)$ are in fact residual:

\begin{theorem}[Typical extension]\label{t.extension}
Let $K\subset\R^n$ be a compact set and $f:K\to\R^n$ a short map. Then, the typical $1$-Lipschitz extension of $f$ to the whole $\R^n$ is isometric on $\R^n\setminus C(f,K)$.
\end{theorem}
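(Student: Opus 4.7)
The plan is a Baire category argument in the complete metric space
\[
X := \{ h : \R^n \to \R^n \mid \Lip(h)\le 1,\ h|_K = f \}
\]
endowed with uniform convergence on compact sets; by Kirszbraun $X$ is non-empty, and as a closed subset of $C(\R^n,\R^n)$ it is completely metrizable. Every $h\in X$ is forced to coincide on $C(f,K)$ with the unique $1$-Lipschitz extension of $f$ there, so I may replace $K$ by the compact set $K':=C(f,K)$ and reduce to showing that the typical $h\in X$ is isometric on the open set $V:=\R^n\setminus K'$.

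By \eqref{e:diff-inclusion1}, isometry of $h$ on $V$ means $\ell(h\circ\gamma)=\ell(\gamma)$ for every rectifiable $\gamma\subset V$, with the direction $\le$ automatic from $\Lip(h)\le 1$. Exhausting $V$ by compact sets $V_j$ with length bounds $L_j\to\infty$, I would encode the non-trivial inequality via
\[
\mathcal G_{j,k}:=\bigl\{h\in X\ \big|\ \ell(h\circ\gamma)>(1-\tfrac{1}{k})\,\ell(\gamma)\ \text{for every Lipschitz }\gamma:[0,1]\to V_j,\ \ell(\gamma)\le L_j\bigr\},
\]
so that $\bigcap_{j,k}\mathcal G_{j,k}$ is exactly the set of $h$ isometric on $V$. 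The theorem then reduces, via Baire, to showing each $\mathcal G_{j,k}$ is open and dense. Openness comes from Arzelà–Ascoli applied to the equi-Lipschitz family of admissible curves (parametrized with constant speed on $[0,1]$), together with the lower semicontinuity of length under uniform convergence, applied jointly to $h_n\to h$ and witness curves $\gamma_n\to\gamma$.

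Density is the main substance. Given $h_0\in X$ and $\delta>0$, Proposition~\ref{p.LSSE0} first lets me replace $h_0$ by a $\delta/2$-close extension that is strictly short on $V_j$; this short room supplies the space for a subsequent oscillating, ``folded'' perturbation at a fine scale, engineered so that the tangential derivative $|\nabla_\tau h_1|$ exceeds $1-1/k$ along every admissible curve while the global $1$-Lipschitz bound is preserved. Integrating then yields $\ell(h_1\circ\gamma)>(1-1/k)\,\ell(\gamma)$. This is the approximate-isometry construction developed in \S\ref{s.approx}.

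The principal obstacle is the uniformity of this density step over the \emph{uncountable} family of admissible curves, and it is here that the calibration announced in the introduction plays its role: one chooses a closed $1$-form $\omega$ on $V_j$ with $|\omega|\le 1$ and designs the folding so that $|h_1^{\ast}\omega|$ is close to $1$ almost everywhere on $V_j$; combined with the inequality $\ell(h_1\circ\gamma)\ge\bigl|\int_\gamma h_1^{\ast}\omega\bigr|$ this delivers the tangential-derivative bound for \emph{every} admissible $\gamma$ simultaneously. With openness and density of each $\mathcal G_{j,k}$ secured, Baire's theorem produces the residual $G_\delta$ of isometric extensions.
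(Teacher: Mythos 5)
Your high-level plan — Baire category in the space of $1$-Lipschitz extensions, with a countable decomposition into open dense sets realized via short perturbations, laminations and a calibration — is the right one and is indeed what the paper does. But the specific decomposition you propose, via
\[
\mathcal G_{j,k}=\bigl\{h : \ell(h\circ\gamma)>(1-\tfrac{1}{k})\,\ell(\gamma)\ \text{for all admissible }\gamma\subset V_j,\ \ell(\gamma)\le L_j\bigr\},
\]
has two genuine gaps, and the paper's Section~\ref{s.approx} is precisely engineered to avoid both of them.

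\textbf{Openness fails with your formulation.} You want the complement of $\mathcal G_{j,k}$ to be closed: take $h_n\to h$ uniformly and witness curves $\gamma_n$ with $\ell(h_n\circ\gamma_n)\le(1-1/k)\ell(\gamma_n)$, extract $\gamma_n\to\gamma$ by Arzel\`a--Ascoli, and pass to the limit. Lower semicontinuity of length gives $\ell(h\circ\gamma)\le\liminf_n\ell(h_n\circ\gamma_n)$, but it gives $\ell(\gamma)\le\liminf_n\ell(\gamma_n)$ on the \emph{right-hand side as well}, and these go in opposite directions: you end up with $\ell(h\circ\gamma)\le(1-1/k)\liminf_n\ell(\gamma_n)$, which does not yield $\ell(h\circ\gamma)\le(1-1/k)\ell(\gamma)$ whenever the limit curve loses length (which it generically does). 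The paper's sets $F_\eps(x,y,S)$ are defined by the inequality $\ell(f\circ\gamma)+\eps\,\ell_g(\gamma)>(1-\eps)\,d_S(x,y)$, in which the right-hand side is a \emph{$\gamma$-independent constant} (fixing the two endpoints $x,y$), and lower semicontinuity then affects only the left-hand side; this is exactly what makes Lemma~\ref{l.closed} work. The separability reduction (Lemma~\ref{l.countable}) is then what makes the intersection over pairs of endpoints countable.

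\textbf{Density fails with your formulation for a more structural reason.} Your set $\mathcal G_{j,k}$ asks one single approximating map $h_1$ to satisfy $\ell(h_1\circ\gamma)>(1-1/k)\ell(\gamma)$ for \emph{all} curves in $V_j$ at once, i.e.\ to be almost tangentially isometric in every direction simultaneously. A single lamination (Proposition~\ref{p:lamination}) cannot do this: it recovers length only in one distinguished direction, giving $\ell(v\circ\gamma)\gtrsim\int|\dot\gamma\cdot e_1|$ but destroying nothing about the orthogonal directions. Your calibration idea, $\ell(h_1\circ\gamma)\ge\bigl|\int_\gamma\omega\bigr|$, gives a lower bound in terms of the displacement measured by $\omega$ along $\gamma$, not in terms of $\ell(\gamma)$; this is compatible with the paper's $F_\eps(x,y,S)$ (where only the displacement $d_S(x,y)$ between two fixed endpoints needs to be retained) but not with your $\mathcal G_{j,k}$. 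The paper's Proposition~\ref{p:density} is exactly the density statement that a single lamination plus the calibration of Proposition~\ref{p:calib} can actually deliver. So the decomposition into $F_\eps(x,y,S)$ over (countably many) fixed pairs of endpoints is not a cosmetic choice but the structural heart of the argument; replacing it with a single "uniformly almost-isometric on all curves" condition breaks both halves of the Baire scheme.
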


We then consider the problem of Dirichlet data $f:K \to \R^n$ which extend to a \textit{global} isometric map $F: \R^n \to \R^n$ (not just of $\R^n \setminus C(f, K)$).
We prove that also this is a generic property.

\begin{theorem}[Typical restriction]\label{t.restriction}
Let $K\subset\R^n$ be a compact set. The typical short map $f:K\to\R^n$ is the restriction of an isometric map of the whole $\R^n$.
\end{theorem}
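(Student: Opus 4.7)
My plan is to deduce Theorem \ref{t.restriction} by combining Proposition \ref{p.LSSE0} and Theorem \ref{t.extension} with an additional Baire category argument on a larger space that simultaneously tracks the short map and a $1$-Lipschitz extension of it. A direct combination of the earlier results does not suffice: Theorem \ref{t.extension} only guarantees isometry on $\R^n \setminus K$. As a concrete obstruction, for $K=[0,1]\subset \R$ and $f\equiv 0$ one has $C(f,K)=K$ yet $f$ does not extend to a global isometry.

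Set $X := \{f\colon K \to \R^n,\; \Lip f \leq 1\}$ with the sup norm, and introduce the complete metric space
\[
\mathcal Y := \bigl\{(f, F) : f \in X,\; F\colon \R^n \to \R^n,\; \Lip F \leq 1,\; F|_K = f\bigr\},
\]
carrying the topology of uniform convergence on $K$ for $f$ together with uniform convergence on compact subsets of $\R^n$ for $F$, with a normalization such as fixing $F(x_0) = y_0$ for a reference point $x_0 \in K$ to guarantee completeness. The projection $\pi \colon \mathcal Y \to X$, $(f, F) \mapsto f$, is continuous and, by Kirszbraun's theorem, surjective.

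Paralleling the proof of Theorem \ref{t.extension}, I would show that
\[
\mathcal E := \bigl\{(f, F) \in \mathcal Y : F \text{ is a global isometry of } \R^n \text{ in Gromov's sense}\bigr\}
\]
is residual in $\mathcal Y$. Writing $\mathcal E = \bigcap_{i, m \in \N} U_{i, m}$ with
\[
U_{i, m} := \bigl\{(f, F) \in \mathcal Y : \ell(F \circ \gamma_i) > \ell(\gamma_i) - 1/m\bigr\}
\]
for a countable dense family $\{\gamma_i\}$ of rectifiable curves in $\R^n$, each $U_{i, m}$ is open by the lower semicontinuity of length under uniform convergence of $1$-Lipschitz maps. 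Density of $U_{i, m}$ follows from the density of approximate isometries established in \S\ref{s.approx}, applied with the perturbation of $F$ localized in a neighborhood of $\gamma_i$ so that the induced change in $f = F|_K$ stays within a prescribed tolerance.

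The final step is to transfer the residuality from $\mathcal Y$ to $X$: the image $\pi(\mathcal E)$ is precisely the set of $f \in X$ extending to a global isometry, and one needs it to be residual via a Kuratowski--Ulam-type argument for the fibration $\pi$. This is exactly the main obstacle. A naive Kirszbraun extension of the perturbation $f' - f$ combined with a cutoff to localize near $K$ does not preserve the $1$-Lipschitz bound, so openness of $\pi$ is delicate. I expect a successful argument to either establish openness via a more refined patching (perhaps a scaled Kirszbraun extension together with a convex-combination correction exploiting the convexity of the fibers $\pi^{-1}(f)$), or bypass openness altogether through a direct fibered Baire-category argument on $\mathcal Y$; in either case this step is substantially more involved than the corresponding step in the proof of Theorem \ref{t.extension}.
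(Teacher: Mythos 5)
Your diagnosis of the gap is correct and your counterexample $K=[0,1]$, $f\equiv 0$ accurately shows that Proposition~\ref{p.LSSE0} plus Theorem~\ref{t.extension} do not on their own yield Theorem~\ref{t.restriction}. However, your proposed fiber-product route is not the paper's route, and the step you flag as the main obstacle is genuinely unresolved in your proposal: you end with a conditional ``I expect a successful argument'' rather than a proof. The question of whether $\pi(\mathcal E)$ is residual when $\mathcal E$ is residual in a non-product fibration is exactly the sort of thing that can fail without a Kuratowski--Ulam theorem tailored to the situation, and you supply neither a proof of openness of $\pi$ nor a fibered Baire argument.

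The paper avoids the fiber-product space entirely. It works directly in $\cX(K,\R^n)$, defining $E_\eps(x_i,x_j,\bar B_R)$ to be the image under the restriction map $R\colon \cX(\R^n,\R^n)\to\cX(K,\R^n)$ of the open set $F_\eps(x_i,x_j,\bar B_R)$. Openness of $E_\eps$ then follows from the cited theorem of Kopeck\'a that $R$ is an open map; density follows because $R$ is a continuous surjection and $F_\eps$ is dense. This gives a residual set $\cF=\cL\cap\bigcap E_\eps$ in $\cX(K,\R^n)$ directly, with no need to push down a residual set of pairs. The second and decisive ingredient, which your proposal does not anticipate, is that membership of $f$ in the sets $E_\eps$ constrains \emph{every} $1$-Lipschitz extension of $f$, not just some preferred extension tracked in a product space: given $f\in\cF$ and the extension $F$ from Theorem~\ref{t.extension} which is isometric only on $\R^n\setminus K$, one argues by contradiction that $F\in\cI(\R^n)$. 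If a curve through $K$ loses length under $F$, one produces rational endpoints $x_{j-1},x_j$ and an auxiliary extension $\bar F\in F_\eps(x_{j-1},x_j,\bar B_R)$ of $f$; because $\bar F|_K=F|_K=f$ and $\bar F$ can only contract outside $K$ while $F$ is isometric there, the shortened curve violates the $F_\eps$ condition on $\bar F$. This transfer-to-arbitrary-extension trick is what makes a Baire argument purely on $\cX(K,\R^n)$ sufficient, and is the idea you would need to find or replace to make your route work.
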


Finally, we address the problem of isometric maps from a Riemannian manifold $M^n$ into $\R^n$.
We show that such maps are residual in the space of short maps.

\begin{theorem}[Typical isometries]\label{t.immersion}
Let $M$ be a $n$-dimensional Riemannian manifold with continuous metric. Then, the isometric maps of $M$ into $\R^n$ are residual in the space of short maps.
\end{theorem}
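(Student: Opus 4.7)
The plan is to apply the Baire category framework of Section \ref{s.approx} in the Riemannian setting. Equip the space $\cX(M)$ of short maps $f : M \to \R^n$ with the topology of uniform convergence on compact subsets. Since the Riemannian metric is continuous, the Lipschitz-$1$ property is preserved under uniform limits, so $\cX(M)$ is a complete metric space and Baire's theorem applies.

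\emph{Step 1 ($G_\delta$ representation).} Invoking the approximate isometry framework of Section \ref{s.approx}, I would express the set of isometric maps as a countable intersection $\bigcap_k \cG_k$, where each $\cG_k$ is the open set of short maps satisfying a quantitative approximate isometry condition --- concretely, preserving, within error $1/k$, the lengths of a countable family $\cF$ of test rectifiable curves (say, piecewise minimizing geodesics with endpoints in a countable dense subset of $M$). Openness of $\cG_k$ follows from lower semicontinuity of the length functional under uniform convergence. The equivalence with the condition \eqref{e:isometry} on arbitrary rectifiable curves uses the short property combined with the identity $\ell(f \circ \alpha) = \int_\alpha |\nabla_\tau f| \, d\cH^1$ and approximation of rectifiable curves by elements of $\cF$.

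\emph{Step 2 (Density).} The essential step is to prove each $\cG_k$ is dense in $\cX(M)$. Given $f_0 \in \cX(M)$ and a target neighborhood of size $\delta$, I would cover the relevant compact portion of $M$ by finitely many small geodesic balls $B_1, \ldots, B_N$ on which the Riemannian metric is $(1 + \eta)$-bi-Lipschitz to the Euclidean metric (via normal coordinates), with $\eta$ much smaller than $1/k$. In each chart, $f_0|_{B_j}$ pulls back to a short Euclidean map; after a preliminary small perturbation on $\partial B_j$ placing the boundary data in the generic class of Proposition \ref{p.LSSE0}, Theorem \ref{t.extension} supplies a Euclidean-isometric modification on $B_j$ coinciding with $f_0$ outside $B_j$. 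Taking $\diam(B_j)$ small ensures the modification is automatically $C^0$-close to $f_0$. Gluing the local modifications produces a map $f \in \cX(M)$ that is Euclidean-isometric in each chart, hence $(1 + O(\eta))$-isometric in the Riemannian metric, placing $f$ in $\cG_k$ once $\eta$ is chosen sufficiently small.

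\emph{Main obstacle.} The central difficulty is reconciling the Euclidean density machinery of Theorem \ref{t.extension}, which operates in coordinate charts, with the genuine Riemannian metric on $M$: a Euclidean isometry in a normal coordinate chart is only an approximate Riemannian isometry, with error controlled by the bi-Lipschitz factor $1 + \eta$. Making this error arbitrarily sharp requires finer and finer coverings, while simultaneously keeping the $C^0$-size of each local perturbation under the prescribed threshold $\delta$ --- precisely the flexibility built into Theorem \ref{t.extension}. A more subtle technical point is the equivalence claimed in Step 1 between preservation on the countable test family $\cF$ and preservation on all rectifiable curves, which demands tangential differentiability arguments in the spirit of the fine differential inclusion framework \eqref{e:diff-inclusion1} of the introduction.
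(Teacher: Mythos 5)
Your overall strategy --- a $G_\delta$ representation via approximate isometries and a density step using coordinate charts with bi-Lipschitz control on the metric --- matches the paper's blueprint, and Step~1 is essentially correct. But the density step (your Step~2) contains a genuine gap: the ``gluing of local modifications'' is not addressed, and as stated it cannot work. If the geodesic balls $B_1,\dots,B_N$ are pairwise disjoint they cannot cover a connected compact set, so any curve that crosses the gaps is completely uncontrolled and the approximate isometry condition fails. If instead they overlap, then modifying $f_0$ on $B_1$ by an isometric extension and then separately modifying $f_0$ on $B_2$ produces two incompatible maps on $B_1\cap B_2$; there is no canonical way to reconcile them that preserves the length-preserving property. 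Moreover, your ``preliminary small perturbation on $\partial B_j$'' is internally inconsistent with the requirement that the modification coincide with $f_0$ outside $B_j$: changing boundary data changes the map globally, not just inside the ball.

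The paper resolves exactly this difficulty with an iterative, chromatic construction that you are missing. One first passes (via Lemma~\ref{l:density}) to a locally \emph{strictly} short approximation $f_0$ with $\Lip(f_0|_C)\le 1-\alpha$; this quantitative slack is what makes the whole scheme possible. The chart domain is then covered by $N$ families $\cF_1,\dots,\cF_N$ of \emph{pairwise disjoint} balls (a Besicovitch-type colored cover, with $N=N(n)$ independent of the radius), and one builds a sequence $f_0,f_1,\dots,f_N$ by modifying only the balls of the $k$-th family at stage $k$, applying Corollary~\ref{c:dir} --- not Theorem~\ref{t.extension} directly --- to replace $f_{k-1}$ inside each ball by a map that \emph{strictly stretches} Euclidean lengths by a factor $(1+\beta)^{3k-1}(1-\alpha)$, matching $f_{k-1}$ on the boundary. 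Because disjointness within each family kills the overlap conflict, and because the stretching factor is slightly bigger than the target $(1+\beta)(1-\alpha)$, the inductive estimate
\[
\ell(f_k\circ\gamma)\ \ge\ (1+\beta)(1-\alpha)\,\ell_g(\gamma)
\quad\text{for all rectifiable }\gamma\subset\textstyle\bigcup_{l\le k}\bigcup_i A_{l,i}
\]
propagates to $f_N$, even for curves that wander between balls of different families and through regions already modified. The global Lipschitz bound $\Lip_g(f_N)\le(1+\beta)^{3N}(1-\alpha)\le1$ is preserved by choosing $\beta$ small in terms of $N$ and $\alpha$. This careful bookkeeping --- the $\Lip_{<1,\mathrm{loc}}$ slack, the $N$-colored disjoint cover, the iterative replacement, and the use of a \emph{stretching} (rather than merely isometric) local solver --- is the content of the paper's Steps~1--3, and without it your argument does not close.
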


\section{Approximate isometric maps}\label{s.approx}

In what follows $M$ is a connected $n$-dimensional smooth manifold
with or without boundary.
We assume that $M$ is endowed with a continuous Riemannian metric $g$; we
denote by $d_M$ and $|\cdot|_g$ the induced Riemannian
distance on $M$ and the norm on each tangent space $T_xM$, respectively.
In the case of subsets of $\R^n$, we use the usual notation $|x|$ and
$x\cdot y$ for the norm and the scalar product of vectors,
respectively.

Given a path-connected subset $S\subseteq M$ we introduce
the following notation.
\begin{itemize}
\item[(a)] The space of short maps from $M$ into $\R^n$ is denoted by $\cX(M,\R^n)$, i.~e.
\[
\cX(M,\R^n):=\big\{f:M\to \R^n\,:\,\Lip_g(f)\leq 1\big\},
\]
where
\[
\Lip_g(f):=\sup_{x\neq y\in M}\frac{|f(x)-f(y)|}{d_M(x,y)}.
\]
\item[(b)] $\Gamma_S(x,y)$ is the set of rectifiable curves from $x$
to $y$ contained in $S$:
\[
\Gamma_S(x,y):=\big\{\gamma:[0,1]\to
S\,:\,\gamma\;\text{rectifiable},\;\gamma(0)=x,\gamma(1)=y\big\}.
\]
We denote by $d_S$ the induced metric, i.e.
\[
d_S(x,y) := \inf_{\gamma \in \Gamma_S(x,y)} \ell_g(\gamma).
\]
\item[(c)] We denote by $\cI(S)$ the set of all short maps $f\in \cX(M,\R^n)$
which are isometric in $S$, i.~e.~$\ell(f\circ\gamma)=\ell_g(\gamma)$ for every
rectifiable curve $\gamma:[0,1]\to S$,
where
\[
\ell(f\circ\gamma)=\int_0^1|(f\circ\gamma)^\prime(t)|\,dt
\quad\text{and}\quad
\ell_g(\gamma)=\int_0^1|\gamma^\prime(t)|_{g(\gamma(t))}\,dt.
\]
Equivalently, $f\in \cI(S)$ if for every $\gamma$ as above
\[
|(f\circ\gamma)^\prime(t)|=|\gamma^\prime(t)|_{g(\gamma(t))}\quad\text{for a.e.
} t\in[0,1].
\]
\item[(d)] For every $\eps>0$ and $x,y\in S$, we
denote by $F_{\eps}(x,y,S)\subset \cX(M,\R^n)$ the mappings satisfying
\begin{align*}
F_{\eps}(x,y,S):=\Big\{f\in \cX(M,\R^n):\,\ell&(f\circ \gamma)+\eps\,\ell_g(\gamma)>
(1-\eps)\,d_S(x,y)\\
& \quad\forall\gamma\in\Gamma_S(x,y)\Big\}.
\end{align*}
\end{itemize}
Note that in general the maps in $\cX(M,\R^n)$ are not bounded (except when $M$ itself is
bounded). 
For this reason, we use the following metric on $\cX(M,\R^n)$:
\[
D(f,g):=\sup_{x\in M}\min\big\{1,|f(x)-g(x)|\big\}=
\min\left\{1,\sup_{x\in M}|f(x)-g(x)|\right\}.
\]
It is easy to verify that $(\cX(M,\R^n),D)$ is a complete metric space and that $D$ induces the
uniform convergence, i.e.
\[
\lim_{l\to+\infty}D(f_l,f)=0
\quad\Longleftrightarrow\quad
\lim_{l\to+\infty}\|f_l-f\|_{C^0(M)}=0.
\]

\bigskip

\begin{definition}
Let $S\subset M$ be path-connected. We define the set of $\eps$-\textit{approximate} isometric maps in $S$ by:
\begin{align}\label{e.app iso}
\cI_\eps(S) &:= \bigcap_{x\neq y \in S} F_\eps(x,y,S).
\end{align}
\end{definition}

The name is justified by the following result.

\begin{lemma}\label{l.iso}
Let $S \subset M$ be path-connected. Then
\begin{equation}\label{e.approx-iso}
\bigcap_{\eps>0}\cI_{\eps}(S) = \cI(S).
\end{equation}
\end{lemma}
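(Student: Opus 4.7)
}
The inclusion $\cI(S)\subseteq\bigcap_{\eps>0}\cI_\eps(S)$ is essentially immediate. If $f\in\cI(S)$, then for any $x\neq y\in S$ and any $\gamma\in\Gamma_S(x,y)$ one has $\ell(f\circ\gamma)=\ell_g(\gamma)\geq d_S(x,y)$, so that
\[
\ell(f\circ\gamma)+\eps\,\ell_g(\gamma)\;\geq\;(1+\eps)\,d_S(x,y)\;>\;(1-\eps)\,d_S(x,y),
\]
where strict inequality follows because $d_S(x,y)>0$ for $x\neq y$. Hence $f\in F_\eps(x,y,S)$ for every $\eps>0$ and every admissible pair $(x,y)$.

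For the nontrivial inclusion, fix $f\in\bigcap_{\eps>0}\cI_\eps(S)$ and a rectifiable curve $\gamma:[0,1]\to S$ (which we may reparametrize to be Lipschitz so that the integrals in (c) make sense). Since $f$ is short we already have $\ell(f\circ\gamma)\leq\ell_g(\gamma)$, so it suffices to show the reverse inequality. Given any partition $0=t_0<t_1<\cdots<t_N=1$, write $x_i:=\gamma(t_i)$ and $\gamma_i:=\gamma|_{[t_{i-1},t_i]}$; then $\gamma_i\in\Gamma_S(x_{i-1},x_i)$ whenever $x_{i-1}\neq x_i$. Applying the defining inequality of $F_\eps(x_{i-1},x_i,S)$ to each such $i$ (the inequality holds trivially when $x_{i-1}=x_i$ since $d_S=0$ on the right) and summing in $i$ yields
\[
\ell(f\circ\gamma)+\eps\,\ell_g(\gamma)\;\geq\;(1-\eps)\sum_{i=1}^{N}d_S(x_{i-1},x_i).
\]
Since $\ell_g(\gamma)<\infty$, letting $\eps\downarrow 0$ gives $\ell(f\circ\gamma)\geq\sum_{i=1}^{N}d_S(x_{i-1},x_i)$ for every partition.

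The last step is to take the supremum over partitions and identify the limit with $\ell_g(\gamma)$. The inequality $d_S\geq d_M$ on $S\times S$ is immediate from the definition of $d_S$, so
\[
\sup_P\sum_{i=1}^{N}d_S(\gamma(t_{i-1}),\gamma(t_i))\;\geq\;\sup_P\sum_{i=1}^{N}d_M(\gamma(t_{i-1}),\gamma(t_i))\;=\;\ell_g(\gamma),
\]
the final equality being the standard identification of the Riemannian length of a Lipschitz curve with its total variation with respect to the induced distance $d_M$. Combining, $\ell(f\circ\gamma)\geq\ell_g(\gamma)$, which together with the short inequality gives $f\in\cI(S)$.

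The argument is short, and the only point requiring any care is the last identification $\sup_P\sum d_M(\gamma(t_{i-1}),\gamma(t_i))=\ell_g(\gamma)$, which is classical for Lipschitz curves in Riemannian manifolds with continuous metric; I expect this to be the only real technical point, while the rest is a direct manipulation of the defining inequality of $F_\eps$ together with the trivial bound $d_S\geq d_M$.
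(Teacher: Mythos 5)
Your proof is correct and follows essentially the same route as the paper: the forward inclusion from $\ell(f\circ\gamma)=\ell_g(\gamma)\geq d_S(x,y)$, and the reverse by partitioning $\gamma$, invoking the $F_\eps$ bound on each sub-arc, letting $\eps\downarrow 0$, using $d_S\geq d_M$, and identifying the supremum over partitions of $\sum d_M(\gamma(t_{i-1}),\gamma(t_i))$ with $\ell_g(\gamma)$. The only differences are cosmetic: you pass $\eps\to 0$ after summing rather than segment-by-segment, and you make explicit the $d_S\geq d_M$ step and the length-as-total-variation identification, which the paper leaves implicit.
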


\begin{proof}
Note first that $\cI(S)\subset F_\eps(x,y,S)$
for every $\eps>0$ and $x\neq y\in S$.
Indeed, every $f\in\cI(S)$ satisfies
\[
\ell(f\circ\gamma)+\eps\,\ell_g(\gamma)=(1+\eps)\,\ell_g(\gamma)>(1-\eps)\,d_M(x
,y) \quad\forall\,\gamma\in\Gamma_S(x,y).
\]

In order to prove the converse inclusion, assume $f\in\cI_{\eps}(S)$ for every $\eps>0$ and let $\gamma:[0,1]\to S$ be a rectifiable curve. Then, for every partition $0=t_0<\ldots < t_m=1$, setting $\gamma_j := \gamma\vert_{[t_j,t_{j+1}]}$, we have 
\begin{align*}
\ell(f\circ\gamma) = \sum_{j=0}^{m-1}\ell(f \circ \,\gamma_j)\geq \sum_{j=0}^{m-1}d_M(\gamma(t_j),\gamma(t_{j+1})).
\end{align*}
Since this holds for any partition, $\ell(f\circ\gamma) \geq \ell_g(\gamma)$ and, hence, $f\in \cI(S)$.
\end{proof}

\subsection{Separability}
We show next that it suffices to take a countable intersection in order to obtain a subset of approximate isometric maps.

\begin{lemma}\label{l.countable}
Let $S$ be path-connected and $S_0\subset S$ be a countable dense subset for the induced metric $d_S$. Then,
\[
\bigcap_{x\neq y\in S_0} F_{\eps}(x,y,S) \subset \mathcal{I}_{2\,\eps}(S).
\]
\end{lemma}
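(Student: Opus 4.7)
Fix $f$ in the hypothesized intersection, choose arbitrary $x\neq y\in S$ (not just in $S_0$) and a curve $\gamma\in\Gamma_S(x,y)$; the goal is to verify
\[
\ell(f\circ\gamma)+2\eps\,\ell_g(\gamma)>(1-2\eps)\,d_S(x,y),
\]
so that $f\in F_{2\eps}(x,y,S)$ and hence $f\in\cI_{2\eps}(S)$. The natural idea is to approximate the endpoints by points of $S_0$ and transfer the estimate available there back to $x,y$, then use the extra $\eps$ of slack to upgrade the resulting non-strict inequality to a strict one.

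More concretely, for any $\delta>0$ use density of $S_0$ in $(S,d_S)$ to pick $x_0,y_0\in S_0$ with $d_S(x_0,x),d_S(y_0,y)<\delta$, and by the definition of $d_S$ as an infimum choose curves $\alpha\in\Gamma_S(x_0,x)$ and $\beta\in\Gamma_S(y,y_0)$ of length less than $2\delta$. Their concatenation $\tilde\gamma:=\alpha\ast\gamma\ast\beta$ lies in $\Gamma_S(x_0,y_0)$, so by hypothesis
\[
\ell(f\circ\tilde\gamma)+\eps\,\ell_g(\tilde\gamma)>(1-\eps)\,d_S(x_0,y_0).
\]
Since $f$ is $1$-Lipschitz, $\ell(f\circ\alpha),\ell(f\circ\beta)\le \ell_g(\alpha)+\ell_g(\beta)<4\delta$, and similarly $\ell_g(\tilde\gamma)<\ell_g(\gamma)+4\delta$; on the other hand the triangle inequality for $d_S$ gives $d_S(x_0,y_0)\ge d_S(x,y)-2\delta$. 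Substituting, rearranging, and letting $\delta\to 0$ leaves
\[
\ell(f\circ\gamma)+\eps\,\ell_g(\gamma)\ge(1-\eps)\,d_S(x,y).
\]

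Finally, to convert this into the strict inequality needed for $F_{2\eps}(x,y,S)$, add $\eps\,\ell_g(\gamma)$ to both sides and use $\ell_g(\gamma)\ge d_S(x,y)$ (from the very definition of $d_S$) to get
\[
\ell(f\circ\gamma)+2\eps\,\ell_g(\gamma)\ge d_S(x,y)>(1-2\eps)\,d_S(x,y),
\]
where strictness comes from $d_S(x,y)\ge d_M(x,y)>0$ since $x\ne y$. This completes the proof plan; the only delicate point is precisely the step I just described, namely that passing to the limit in $\delta$ loses strictness, and one needs to exploit the factor $2$ in $2\eps$ together with the trivial bound $\ell_g(\gamma)\ge d_S(x,y)$ to recover it. Everything else is routine manipulation of path lengths under the $1$-Lipschitz assumption.
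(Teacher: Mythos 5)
Your proof is correct and follows the paper's overall strategy: approximate $x,y$ by $x_0,y_0\in S_0$, concatenate short connecting arcs with $\gamma$ to get $\tilde\gamma\in\Gamma_S(x_0,y_0)$, apply the hypothesis at $(x_0,y_0)$ to $\tilde\gamma$, and transfer the resulting estimate back to $\gamma$. The genuine difference is how strictness is recovered. The paper never passes to a limit: it fixes $\eta<\frac{\eps}{4}\,d_S(x,y)$ once and for all and absorbs all $O(\eta)$ losses into the gap between $(1-\eps)$ and $(1-2\eps)$, so the slack term $\eps\,\ell_g(\gamma)$ in the definition of $F_{2\eps}$ is essentially unused (only the trivial $\eps<2\eps$ is invoked at the very end). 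You instead let $\delta\to 0$, which streamlines the bookkeeping but yields only the non-strict inequality $\ell(f\circ\gamma)+\eps\,\ell_g(\gamma)\ge(1-\eps)\,d_S(x,y)$; you then restore strictness from the elementary bound $\ell_g(\gamma)\ge d_S(x,y)>0$ (valid since $d_S\ge d_M$ and $x\neq y$), spending the extra $\eps\,\ell_g(\gamma)$ precisely on this upgrade. Both arguments are correct; yours trades the paper's explicit constant-tracking for a cleaner limit argument plus one extra observation.
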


\begin{proof}
We may assume without loss of generality that $\eps<1/2$, otherwise the statement is trivial.
Let $f\in F_{\eps}(x_0,y_0,S)$ for all $x_0,y_0\in S_0$.
For $x,y\in S$, we choose $\eta>0$ and $x_0,y_0\in S_0$ such that
\[
\eta<\frac{\eps}{4}\,d_S(x,y),
\]
and
\[
d_S(x,x_0)+d_S(y,y_0)<\eta. 
\]
We can find two curves $\gamma_1\in \Gamma_S(x_0,x)$ and $\gamma_2\in\Gamma_S(y,y_0)$ such that
\[
 \ell_g(\gamma_1)+\ell_g(\gamma_2) \leq d_S(x,x_0)+d_S(y,y_0) + \eta.
\]
Observe that
\begin{align*}
(1-2\eps)d_S(x,y)&\leq (1-\eps)d_S(x_0,y_0)-\eps d_S(x,y)+(1-\eps)\eta\\
&\leq (1-\eps) d_S(x_0,y_0)-2\eta(1+\eps),
\end{align*}
since
$$
(3+\eps)\eta\leq \eps d_S(x,y).
$$
Then we consider the concatenation $\tilde\gamma := \gamma_2 \cdot \gamma \cdot \gamma_1$ (i.e., the curve obtained by joining, in the order, the curves $\gamma_1$, $\gamma$ and $\gamma_2$), and note that $\tilde\gamma\in \Gamma_S(x_0,y_0)$. Using that 
\begin{align*}
\ell_g(\tilde\gamma)\leq \ell_g(\gamma)+2\eta
\end{align*}
and that $f\in F_{\eps}(x_0,y_0,S)$, we obtain
\begin{align*}
\ell(f\circ\,\gamma)& \geq \ell(f\circ\,\tilde\gamma) - 2\,\eta\\
& > (1-\eps)\,d_S(x_0,y_0)-\eps(\ell_g(\gamma)+2\eta)-2\,\eta\\
&\geq  (1-2\eps)\,d_S(x,y)-\eps\ell_g(\gamma)\\
& > (1-2\,\eps)\,d_S(x,y)-2\eps\ell_g(\gamma).
\end{align*}
This shows that $f\in F_{2\eps}(x,y,S)$. Since this holds for every $x,y\in S$, we conclude $f\in \cI_{2\eps}(S)$.
\end{proof}

\subsection{Closedness}
The following lemma shows that the sets of approximate isometric maps are $G_\delta$ sets.

\begin{lemma}\label{l.closed}
Let $S\subset M$ be compact. Then, for every $x,y\in S$ and $\eps>0$, $F_\eps(x,y,S)$ is open in $\cX(M,\R^n)$.
\end{lemma}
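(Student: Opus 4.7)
The plan is to show that the complement of $F_\eps(x,y,S)$ in $\cX(M,\R^n)$ is sequentially closed under $D$. Since $D$ induces uniform convergence, suppose $f_n \to f$ uniformly on $M$ and that each $f_n$ lies in the complement: there exists a witness curve $\gamma_n \in \Gamma_S(x,y)$ with
\[
\ell(f_n\circ\gamma_n) + \eps\,\ell_g(\gamma_n) \leq (1-\eps)\,d_S(x,y).
\]
The goal is to extract a limit curve $\gamma \in \Gamma_S(x,y)$ that witnesses $f$ being in the complement as well.

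Since $\ell(f_n\circ\gamma_n)\geq 0$, the assumed inequality yields the a priori bound $\ell_g(\gamma_n) \leq L := \frac{1-\eps}{\eps}\,d_S(x,y)$. Reparametrize each $\gamma_n$ by constant $g$-speed on $[0,1]$; then $\gamma_n$ becomes $L$-Lipschitz as a map from $[0,1]$ into the compact metric space $(S, d_M|_S)$, because $d_M(\gamma_n(s),\gamma_n(t))\leq \ell_g(\gamma_n|_{[s,t]}) \leq L|s-t|$. Arzelà--Ascoli then yields a subsequence $\gamma_{n_k}$ converging uniformly to an $L$-Lipschitz curve $\gamma:[0,1]\to S$ with $\gamma(0)=x$ and $\gamma(1)=y$, so $\gamma \in \Gamma_S(x,y)$. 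A short triangle-inequality argument, using that $f$ is $1$-Lipschitz and $\|f_n-f\|_\infty \to 0$, shows that $f_{n_k}\circ\gamma_{n_k} \to f\circ\gamma$ uniformly on $[0,1]$. Applying the lower semicontinuity of length under uniform convergence --- both for the Euclidean length $\ell$ of the curves $f_{n_k}\circ\gamma_{n_k}$ in $\R^n$ and for the $g$-length $\ell_g$ of the curves $\gamma_{n_k}$ in $(M,g)$ --- yields
\[
\ell(f\circ\gamma) + \eps\,\ell_g(\gamma) \leq \liminf_{k\to\infty}\bigl[\ell(f_{n_k}\circ\gamma_{n_k}) + \eps\,\ell_g(\gamma_{n_k})\bigr] \leq (1-\eps)\,d_S(x,y),
\]
so $f$ lies in the complement.

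The only ingredient that requires minor care is the lower semicontinuity of $\ell_g$ on $(M,g)$ when $g$ is merely continuous, which is classical. Note we only need to control $\ell_g(\gamma_n)$, not $d_S$: since by definition $\ell_g$ dominates the induced $d_M$-length of a curve, parametrizing by constant $g$-speed is enough to set up the Arzelà--Ascoli argument in $(S, d_M|_S)$, and the fact that the limit curve still lies in $S$ uses only the closedness of $S$ in $M$. No finer structure of $M$ is used.
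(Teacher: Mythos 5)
Your proof is correct and follows essentially the same route as the paper: pass to the complement, extract a uniform bound $\ell_g(\gamma_n)\leq\frac{1-\eps}{\eps}d_S(x,y)$, compactify via Arzel\`a--Ascoli, and invoke lower semicontinuity of both $\ell$ and $\ell_g$ under uniform convergence. If anything you are a bit more careful than the paper, which omits the constant-speed reparametrization needed to get the equicontinuity that Arzel\`a--Ascoli requires; making that step explicit is a small but genuine improvement in rigor.
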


\begin{proof}
We show that $\cX(M,\R^n)\setminus F_\eps(x,y,S)$ is
closed under the uniform convergence induced by $D$.
To this aim, assume that $f_k\in \cX(M,\R^n)\setminus F_\eps(x,y,S)$
converges to $f$ uniformly in $M$.
By assumption, there exist $\gamma_k\in\Gamma_S(x,y)$ with
\[
\ell(f_k\circ \gamma_k)+\eps\,\ell_g(\gamma_k)\leq (1-\eps)\,d_S(x,y).
\]
In particular, the lengths $\ell_g(\gamma_k)$ are uniformly bounded.
Therefore, since we are considering curves in the compact set $S$,
we may extract a subsequence such that $\gamma_{k_j}\to\gamma\in\Gamma_S(x,y)$
uniformly.
This implies that also $f_{k_j}\circ\gamma_{k_j}$ converges uniformly to
$f\circ\gamma$.
Now, since the length is lower semicontinuous under uniform convergence,
we deduce that
\[
\ell(f\circ\gamma)+\eps\,\ell_g(\gamma)\leq (1-\eps)\,d_S(x,y).
\]
This implies that $f\in \cX(M,\R^n)\setminus F_\eps(x,y,S)$, hence $\cX(M,\R^n)\setminus F_\eps(x,y,S)$ is closed.
\end{proof}

\subsection{Locality}
The notion of isometric map is local in the following sense.

\begin{lemma}\label{l.local}
Let $\{U_\alpha\}_{\alpha\in\cA}$ be an open covering of $M$ such that every $U_\alpha$ is path-connected. Let $f\in \cX(M,\R^n)$ be such that $f\vert_{U_\alpha} \in \cI(U_\alpha)$. Then $f\in \cI(M)$.
\end{lemma}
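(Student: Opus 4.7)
My plan is to prove the length-preservation property curve-by-curve, using compactness to reduce to the local hypothesis on each $U_\alpha$.

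Fix an arbitrary rectifiable curve $\gamma:[0,1]\to M$. Since $f\in\cX(M,\R^n)$, we already have $\ell(f\circ\gamma)\leq\ell_g(\gamma)$, so it suffices to prove the opposite inequality. The image $\gamma([0,1])$ is a compact subset of $M$ covered by the open family $\{U_\alpha\}_{\alpha\in\cA}$, so by the Lebesgue number lemma applied to the pullback covering $\{\gamma^{-1}(U_\alpha)\}$ of $[0,1]$, there exists $\delta>0$ such that every subinterval of $[0,1]$ of length less than $\delta$ is mapped by $\gamma$ entirely into some $U_\alpha$. Choose a partition $0=t_0<t_1<\dots<t_m=1$ with $t_{j+1}-t_j<\delta$, and pick indices $\alpha_j$ such that $\gamma([t_j,t_{j+1}])\subset U_{\alpha_j}$.

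For each $j$, the restriction $\gamma_j:=\gamma\vert_{[t_j,t_{j+1}]}$ is a rectifiable curve contained in the path-connected set $U_{\alpha_j}$. Since $f\vert_{U_{\alpha_j}}\in\cI(U_{\alpha_j})$ and the Riemannian length is computed with respect to the same metric $g$ whether we view $\gamma_j$ as lying in $U_{\alpha_j}$ or in $M$, the defining property of $\cI(U_{\alpha_j})$ (after an innocuous affine reparameterization from $[t_j,t_{j+1}]$ to $[0,1]$, which preserves both lengths) yields
\[
\ell(f\circ\gamma_j)=\ell_g(\gamma_j).
\]
Summing over $j$ and using additivity of length under concatenation gives
\[
\ell(f\circ\gamma)=\sum_{j=0}^{m-1}\ell(f\circ\gamma_j)=\sum_{j=0}^{m-1}\ell_g(\gamma_j)=\ell_g(\gamma),
\]
so $f\in\cI(M)$.

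There is no real obstacle here: the argument is a standard compactness-plus-partition reduction, and the only thing to verify is that the Riemannian length on $U_\alpha$ coincides with the restriction of $\ell_g$ to curves in $U_\alpha$, which is immediate since each $U_\alpha$ is open and inherits the metric $g$.
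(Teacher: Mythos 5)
Your proof is correct and follows the same strategy as the paper's: use compactness (you via the Lebesgue number lemma, the paper via uniform continuity of $\gamma$ plus a finite subcover) to obtain a partition fine enough that each sub-arc lies in a single $U_\alpha$, then apply the local isometry hypothesis on each piece and sum. The two arguments are essentially identical.
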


\begin{proof}
We need to prove that, for a given curve $\gamma:[0,1]\to M$,
\[
 \ell(f\circ \gamma)= \ell_g(\gamma).
\]
Since $\gamma([0,1])$ is compact, we begin fixing a finite covering of $\gamma([0,1])$ by sets $U_{\alpha_j}$, $j=1,\ldots,m$. Using the uniform continuity of $\gamma$, we infer the existence of $\eta>0$ such that
\[
\forall \; t\in [0,1]\quad \exists \; j \in \{1,\ldots,m\} \quad \text{such that} \quad \gamma([t,t+\eta])\subset U_{\alpha_j}.
\]
We then choose any partition $0=t_0 \leq \ldots \leq t_m=1$ such that $|t_i-t_{i+1}|\leq\eta$. By the choice of $\eta$, for every $i=1,\ldots m-1$ there exists $j(i)$ such that $\gamma([t_i,t_{i+1}]) \subset U_{\alpha_{j(i)}}$. Therefore, from $f\vert_{U_\alpha}\in \cI(U_\alpha)$ we deduce that
\[
\ell(f\circ \gamma\vert_{[t_i,t_{i+1}]})=\ell_g(\gamma\vert_{[t_i,t_{i+1}]})\quad\forall \; i=1,\ldots, m-1,
\]
and therefore
\[
\ell(f\circ\,\gamma)=\sum_{i=0}^{m-1} \ell(f\circ \, \gamma\vert_{[t_i,t_{i+1}]}) = \sum_{i=0}^{m-1} \ell_g(\gamma_{[t_i,t_{i+1}]}) =\ell_g(\gamma).
\qedhere
\]
\end{proof}

\section{Locally strictly short extensions}\label{s:LSSE}

As mentioned in the introduction, given a short map $f:K \subset \R^n\to \R^n$ on a compact set $K$, $f$ will have a unique $1$-Lipschitz extension $\bar{f}$ to a possibly larger set containing $K$, namely
\[
C(f,K) := \bigcup_{H\in \cS} \textup{conv}(H),
\]
where $\cS := \{ H\subset K : f\vert_H \;\textup{is an affine isometry}\}$. Here "$f\vert_H$ affine" is understood in the sense that 
$f\vert_H(x) = A\,x +b$ for some $A \in O(n)$ and $b \in \R^n$. Then $f$ extends uniquely as a $1$-Lipschitz map on $C(f,K)$ and 
in particular $K \subset C(f,K)$.
In the following lemmas we prove two simple properties of $C(f,K)$, namely its compactness and a hull-type property.

\begin{lemma}\label{l.C(f,K)}
For every $K\subset \R^n$ compact and $f:K \to \R^n$ short, $C(f,K)\subset \R^n$ is compact.
\end{lemma}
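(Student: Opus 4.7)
The plan is to handle boundedness and closedness of $C(f,K)$ separately. Boundedness is immediate: every $H\in\cS$ is contained in $K$, so $\textup{conv}(H)\subset\textup{conv}(K)$, and the latter is bounded since $K$ is compact. Hence $C(f,K)\subset \textup{conv}(K)$ is bounded.

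For closedness, the key tool is Carath\'eodory's theorem. Take $x_k\in C(f,K)$ with $x_k\to x$ in $\R^n$, and pick $H_k\in\cS$ with $x_k\in\textup{conv}(H_k)$. By Carath\'eodory I can write
$x_k=\sum_{i=0}^{n}\lambda_{k,i}\,h_{k,i}$
with $h_{k,i}\in H_k$, $\lambda_{k,i}\geq 0$ and $\sum_i\lambda_{k,i}=1$. The crucial observation is that this reduction stays inside $\cS$: if $f|_{H_k}(y)=A_k y+b_k$ with $A_k\in O(n)$, $b_k\in\R^n$, then the same formula holds on the subset $\{h_{k,0},\dots,h_{k,n}\}$, so the latter also belongs to $\cS$. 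I may therefore assume each $H_k$ has at most $n+1$ elements enumerated as above.

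Now I extract convergent subsequences from three compact spaces simultaneously: $h_{k,i}\to h_i\in K$ from compactness of $K$, $\lambda_{k,i}\to\lambda_i$ from compactness of the standard simplex, and $A_k\to A\in O(n)$ from compactness of $O(n)$. Since $f$ is $1$-Lipschitz, hence continuous on $K$, the identity $b_k=f(h_{k,0})-A_k h_{k,0}$ yields $b_k\to b:=f(h_0)-A h_0$. Passing to the limit in $f(h_{k,i})=A_k h_{k,i}+b_k$ gives $f(h_i)=A h_i+b$ for every $i$, so $\{h_0,\dots,h_n\}\in\cS$. Taking limits in the convex combination gives $x=\sum_i\lambda_i h_i\in\textup{conv}\{h_0,\dots,h_n\}\subset C(f,K)$, proving closedness.

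I do not anticipate a real obstacle here; the only point requiring care is to first trim $H_k$ to at most $n+1$ points via Carath\'eodory \emph{before} extracting subsequences, so that all compactness arguments (in $K$, in $O(n)$, in the simplex) can be carried out on a fixed finite index set. After that, the continuity of $f$ and the closedness of $O(n)$ do the rest.
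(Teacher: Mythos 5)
Your proof is correct and takes essentially the same route as the paper's: boundedness is immediate from $C(f,K)\subset\textup{conv}(K)$, and closedness is obtained via Carath\'eodory's theorem combined with compactness of $K$ and of the simplex of barycentric coefficients. The only small difference is the final step of verifying that the limit set lies in $\cS$: the paper checks that pairwise distances pass to the limit and implicitly invokes the fact that a distance-preserving map on a finite set extends to a Euclidean motion, while you additionally track the matrices $A_k$ and use compactness of $O(n)$ to exhibit the limiting affine isometry explicitly --- a marginally more verbose but equivalent argument.
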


\begin{proof}
We notice first that $C(f,K)$ is a bounded set. Therefore, we need only to show that it is closed.
Assume that $z^l \in \textup{conv}(H_l) \to z$.
Using Carath\'eodory's Theorem, we may assume without loss of generality that $H_l = \{y^l_0, \ldots, y^l_{n}\}$ and 
\[
z^l= \sum_{i=0}^{n} \lambda_{i}^{l} y_i^l,\quad\text{with }\,\sum_{i=0}^n\lambda_i^l=1,\;\lambda_i^l\geq0.
\]
By compactness (up to extracting subsequences which are not relabelled) we may infer that there exist $y_i \in \R^n$ and $\lambda_i\in [0,1]$ for $i=0,\ldots, n$ such that
\[
 \lim_{l\to+\infty}y_i^l = y_i\quad\text{and}\quad \lim_{l\to+\infty} \lambda_i^l = \lambda_i.
\]
Then, $z\in \textup{conv}(H)$ for $H:=\{y_0,\ldots, y_n\}$. Moreover, $H\in \cS$ because
\[
 |g(y_i)-g(y_j)| = \lim_{l\to+\infty}|g(y_i^l)-g(y_j^l)|=\lim_{l\to+\infty}|y_i^l-y_j^l|=|y_i-y_j|\quad\forall \;i,j.
\]
This shows that $z\in C(f,K)$, i.e.~$C(f,K)$ is closed.
\end{proof}

\begin{lemma}\label{l.C}
Let $f:K\to \R^n$ be a short map, with $K\subset\R^n$ compact and let $\bar{f}:C(f,K)\to \R^n$ be the unique $1$-Lipschitz extension of $f$ to $C(f,K)$. Then,
\[
 C(\bar{f},C(f,K))=C(f,K).
\]
\end{lemma}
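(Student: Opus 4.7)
The trivial inclusion $C(f,K) \subset C(\bar f, C(f,K))$ holds because every singleton is an $\cS$-set. For the reverse, I take $z \in C(\bar f, C(f,K))$ and apply Carath\'eodory to write $z = \sum_{i=0}^{n} \lambda_i w_i$ with $w_0,\ldots,w_n \in C(f,K)$ and $\bar f|_{\{w_0,\ldots,w_n\}}$ an affine isometry $w \mapsto Aw + b$, $A \in O(n)$. Each $w_i$ lies in some $\textup{conv}(H_i)$ with $H_i = \{y_{i,0},\ldots,y_{i,n_i}\} \subset K$ a finite set on which $f$ is an affine isometry $y \mapsto A_i y + b_i$; writing $w_i = \sum_k \alpha_{ik} y_{ik}$ (with $\alpha_{ik}>0$ after discarding redundant vertices) and putting $H := \bigcup_i H_i \subset K$, one automatically has $z \in \textup{conv}(H)$. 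The entire task reduces to showing $H \in \cS$, and since any distance-preserving map between finite subsets of $\R^n$ extends to a rigid motion, it suffices to prove $|f(y_{ik}) - f(y_{jl})| = |y_{ik} - y_{jl}|$ whenever $i \neq j$.

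Two ingredients will be used. First, $\bar f$ necessarily coincides with the affine map $y \mapsto A_i y + b_i$ on $\textup{conv}(H_i)$: any 1-Lipschitz candidate $\bar f(w_i)$ must satisfy $|\bar f(w_i) - f(y_{ik})|\leq |w_i - y_{ik}|$, so the strictly convex quadratic $p \mapsto \sum_k \alpha_{ik}|p - f(y_{ik})|^2$ is bounded above at $p = \bar f(w_i)$ by the value it already attains at its unique minimizer $\sum_k \alpha_{ik} f(y_{ik}) = A_i w_i + b_i$, forcing equality. In particular $\bar f(w_i) = \sum_k \alpha_{ik} f(y_{ik})$. Second, I will use the following averaging lemma: if $u_k, v_k \in \R^n$ and $\alpha_k \geq 0$ with $\sum_k \alpha_k = 1$ satisfy $|u_k - u_l| = |v_k - v_l|$ for all $k,l$, $|u_k| \leq |v_k|$ for all $k$, and $|\sum_k \alpha_k u_k| = |\sum_k \alpha_k v_k|$, then $|u_k| = |v_k|$ for every $k$ with $\alpha_k > 0$. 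The equidistant hypothesis lets one write $u_k = R v_k + c$ with $R \in O(n)$, $c \in \R^n$; expanding squares yields $\eta_k := 2(Rv_k)\cdot c + |c|^2 = |u_k|^2 - |v_k|^2 \leq 0$, while the third hypothesis unpacks to $\sum_k \alpha_k \eta_k = 0$, forcing $\eta_k = 0$ whenever $\alpha_k > 0$.

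I then apply this averaging lemma twice. First with $v_k = y_{ik} - w_j$, $u_k = f(y_{ik}) - \bar f(w_j)$, $\alpha_k = \alpha_{ik}$: the hypotheses are verified by $f|_{H_i}$ being an isometry, by $\bar f$ being 1-Lipschitz, and by combining $|\bar f(w_i) - \bar f(w_j)| = |w_i - w_j|$ with the affine formula $\bar f(w_i) = \sum_k \alpha_{ik} f(y_{ik})$. The output is $|f(y_{ik}) - \bar f(w_j)| = |y_{ik} - w_j|$ for every $k$ and every $j$. A second application with $v_l = y_{ik} - y_{jl}$, $u_l = f(y_{ik}) - f(y_{jl})$, $\alpha_l = \alpha_{jl}$ (using the identity just obtained together with the affine formula for $\bar f(w_j)$ to verify the third hypothesis) yields $|f(y_{ik}) - f(y_{jl})| = |y_{ik} - y_{jl}|$, completing the proof. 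The main obstacle is setting up these two successive applications of the averaging lemma so that the first lifts the ``macroscopic'' isometry on the pair $\{w_i, w_j\}$ to an isometry between the $K$-vertices of $H_i$ and the barycenter $w_j$, and only then does a second application reach the $K$-vertices of $H_j$.
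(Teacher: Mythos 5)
Your proof is correct, and it is more complete than the one in the paper. Both rest on the same polarization identity: your averaging lemma, obtained via the rigid-motion normalization $u_k = Rv_k + c$, is exactly the paper's computation where $\left|\sum_i \alpha_i x_i\right| = \left|\sum_i \alpha_i f(x_i)\right|$ is squared and the cosine rule gives $f(x_i)\cdot f(x_j)\le x_i\cdot x_j$. The difference is structural. The paper reduces to showing $[x,y]\subset C(f,K)$ whenever $|\bar f(x)-\bar f(y)|=|x-y|$, normalizes $y=\bar f(y)=0$, and after one polarization concludes ``$\{0\}\cup H\in\cS$''. But $\cS$ consists of subsets of $K$, while $y=0$ is only known to lie in $C(f,K)$; when $y\notin K$ one must also identify $y$ with its own $\cS$-set $H_0\subset K$ and verify the cross-distances $|f(x_i)-f(y_j)|=|x_i-y_j|$ for $x_i\in H,\,y_j\in H_0$. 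That is precisely your second application of the averaging lemma, and your ``Ingredient 1'' (uniqueness of the affine extension on $\textup{conv}(H_i)$, which you reprove cleanly via strict convexity) is also what underlies the paper's silent reduction to segments. So you have not taken a shortcut: you have made explicit the double Carath\'eodory decomposition and the two-stage transfer that the paper elides, producing a self-contained proof that closes a small gap in the published argument.
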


\begin{proof}
It is enough to show that, for every $x,y \in C(f,K)$ such that $|\bar{f}(x)-\bar{f}(y)|=|x-y|$, it holds
\[
[x,y]:=\big\{\lambda\,x+(1-\lambda)\,y \,:\, \lambda\in[0,1] \big\}\subset C(f,K).
\]
Without loss of generality, we may assume that
\begin{equation}\label{e.wlog}
y=\bar{f}(y)=0.
\end{equation}
Set $H:=\big\{x_0,\ldots, x_l\big\}\subset K$, $l\leq n$, such that $f\vert_H$ is an affine isometry and $x=\sum_i \alpha_i \,x_i$ for positive $\alpha_i$ with $\sum_i \alpha_i=1$.
Note that in general $l$ may be different from $n$, because we assumed that $\alpha_i>0$ for every $i$.
Since $f\vert_H$ is affine and \eqref{e.wlog} holds, we have
\begin{equation*}
\left\vert \sum_i \alpha_i\,x_i \right\vert =  \left\vert \sum_i \alpha_i\,f(x_i) \right\vert.
\end{equation*}
Squaring we get
\begin{equation}\label{e.square}
 \sum_i \alpha_i^2\,|x_i|^2 + \,\sum_{i\neq j}\alpha_i\,\alpha_j\, x_i \cdot x_j =
\sum_i \alpha_i^2\,|f(x_i)|^2 + \,\sum_{i\neq j}\alpha_i\,\alpha_j\, f(x_i) \cdot f(x_j) .
\end{equation}
From \eqref{e.wlog} and $\Lip(\bar{f})\leq 1$, it follows that $|f(z)|\leq |z|$ for every $z \in K$. Recalling that $|f(x_i)-f(x_j)| = |x_i-x_j|$ for $x_i,x_j\in H$, this implies
\begin{align}\label{e.product}
f(x_i) \cdot f(x_j) &= \frac{1}{2}\Bigl(|f(x_i)|^2 + |f(x_j)|^2 - |f(x_i)-f(x_j)|^2 \Bigr)\notag\\
& \leq\frac{1}{2}\Bigl(|x_i|^2 + |x_j|^2 - |x_i-x_j|^2 \Bigr)\notag\\
& = x_i \cdot x_j.
\end{align}
Using \eqref{e.square} and \eqref{e.product} together (recall that $|f(z)|\leq |z|$ for every $z \in K$), we deduce that $|f(x_i)|=|x_i|$ for every $x_i \in H$. In particular, $\{0\}\cup H \in \cS$ and by definition
\[
[0,x]\subset \textup{conv}\big(\{0\}\cup H\big) \subset C(f,K). \qedhere
\]
\end{proof}

We now turn to the proof of Proposition \ref{p.LSSE0}. We start with a definition.Ê

\begin{definition}[LSSE]\label{d.LSSE}
Let $K\subset\R^n$ be a compact set and $f:K\to\R^n$ a short map. We say that $f$ is \textit{locally strict short extendable}, or briefly $f$ is LSSE, if there exists $h \in \cX(\R^n,\R^n)$ such that $h\vert_K=f$ and $\Lip(h\vert_A) <1$ for every $A \subset\subset \R^n \setminus K$.
\end{definition}

Clearly, if $f:K\to\R^n$ is LSSE, then $C(f,K)=K$.
We show that this is also a sufficient condition for $f$ to be LSSE.

\begin{proposition}\label{p.lsse condition}
For a short function $f:K\to\R^n$ the following are equivalent
\begin{itemize}
 \item[(a)] $f$ is LSSE;
 \item[(b)] for every $x\notin K$ there exists $p_x\in\R^n$ such that
\begin{equation}\label{e.point ext}
|p_x-f(y)|<|x-y|\qquad\forall\;y\in K;
\end{equation}
 \item[(c)] for every $x\notin K$, there exist at least two different $1$-Lipschitz extensions $f_1$, $f_2$ of $f$ to $K\cup\{x\}$. 
\item[(d)]\begin{equation}\label{e.(P)}
 x,y\in K:\; |f(x)-f(y)|=|x-y|\quad\Rightarrow \quad [x,y]\subset K.
\end{equation}
\end{itemize}
In particular, $f$ is LSSE if and only if $C(f,K)=K$.
\end{proposition}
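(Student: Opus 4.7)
My plan is to establish the cycle $(a)\Rightarrow(b)\Rightarrow(c)\Rightarrow(d)\Rightarrow(a)$, using the auxiliary equivalence $(d)\Leftrightarrow C(f,K)=K$ as a bridge. The first three implications are essentially direct. For $(a)\Rightarrow(b)$ I set $p_x:=h(x)$; for any $y\in K$, I split $[x,y]$ at its first hitting time of $K$, obtaining a compact initial sub-segment $[x,x']\subset\R^n\setminus K$ on which $\Lip(h|_{[x,x']})<1$, while the global 1-Lipschitz bound on the remainder yields the strict inequality $|h(x)-f(y)|<|x-y|$. For $(b)\Rightarrow(c)$, continuity on the compact set $K$ promotes the pointwise strict inequality to a uniform slack $\eta>0$, so every $p_x+v$ with $|v|<\eta$ produces an admissible 1-Lipschitz extension. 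For $(c)\Rightarrow(d)$ I argue by contrapositive: if $u,w\in K$ satisfy $|f(u)-f(w)|=|u-w|$ and some $z=\lambda u+(1-\lambda)w$, $\lambda\in(0,1)$, lies outside $K$, then the triangle inequality forces any 1-Lipschitz extension to $K\cup\{z\}$ to take the unique value $\lambda f(u)+(1-\lambda)f(w)$ at $z$, contradicting $(c)$.

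The equivalence $(d)\Leftrightarrow C(f,K)=K$ I would handle by induction on $|H|$, proving the stronger claim that if $H=\{x_0,\ldots,x_k\}\subset K$ and $f|_H$ coincides with an affine isometry $x\mapsto Ax+b$, then $\textup{conv}(H)\subset K$ and $f$ still agrees with $Ax+b$ on $\textup{conv}(H)$. In the inductive step, any $z\in\textup{conv}(H)$ has the form $\lambda x_0+(1-\lambda)w$ with $w\in\textup{conv}(H\setminus\{x_0\})$; by the inductive hypothesis $w\in K$ and $f(w)=Aw+b$, so $|f(x_0)-f(w)|=|x_0-w|$, hence $[x_0,w]\subset K$ by $(d)$, and a further triangle-saturation on $[x_0,w]$ pins down $f=Ax+b$ on the whole segment.

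The remaining step $(d)\Rightarrow(a)$ is the main work, which I split in two. First, $(d)\Rightarrow(b)$: if $(b)$ fails at some $x\notin K$, then $B(x):=\bigcap_{y\in K}\overline{B}(f(y),|x-y|)$ is nonempty (by Kirszbraun) but has empty interior, and a Helly/Carath\'eodory analysis at a suitable $p^{*}\in B(x)$ produces $y_0,\ldots,y_n\in K$ and weights $\mu_i\geq 0$ with $\sum_i\mu_i=1$ such that $|p^{*}-f(y_i)|=|x-y_i|$ and $\sum_i\mu_i(p^{*}-f(y_i))=\sum_i\mu_i(x-y_i)=0$; expanding both vanishing norms together with the 1-Lipschitz bound $|f(y_i)-f(y_j)|\leq|y_i-y_j|$ forces equality $|f(y_i)-f(y_j)|=|y_i-y_j|$ for all $i,j$, whence $\{y_0,\ldots,y_n\}\in\cS$ and $x\in\textup{conv}(\{y_0,\ldots,y_n\})\subset C(f,K)=K$, a contradiction. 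Second, $(b)\Rightarrow(a)$: I iteratively add points of a countable dense sequence $\{x_k\}\subset\R^n\setminus K$; at each step $(d)$, and hence $(b)$, holds for the current finite extension by induction, so one can choose a strictly admissible $p_k$, and the resulting 1-Lipschitz function on $K\cup\{x_k\}_k$ passes to a continuous 1-Lipschitz $h:\R^n\to\R^n$ with $h|_K=f$. The \emph{main obstacle} is to make the selection quantitative enough that the limit $h$ is genuinely LSSE, i.e.\ $\Lip(h|_A)<1$ on every $A\subset\subset\R^n\setminus K$, and not merely strictly short pointwise; this requires choosing each $p_k$ (for instance as a Chebyshev-type center of the admissible body) in combination with an exhaustion $\R^n\setminus K=\bigcup_m A_m$ by compact sets, so as to secure a uniform Lipschitz defect $\delta_m>0$ on each $A_m$.
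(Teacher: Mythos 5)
Most of your cycle is sound and genuinely different from the paper's route. Your inductive proof of $(d)\Leftrightarrow C(f,K)=K$ (peeling off one vertex of $H$ at a time and using triangle saturation on each segment) is a clean alternative to the paper's Lemma~\ref{l.C}, and your variational proof of $(d)\Rightarrow(b)$ via the admissible body $B(x)=\bigcap_{y\in K}\overline{B}(f(y),|x-y|)$ differs from the paper's two-case analysis of the active set $H$. You should, however, justify the existence of a ``suitable'' $p^{*}$: the relevant fact is that $B(x)$ having empty interior forces it to be a singleton (at a relative interior point of a $B(x)$ of positive dimension, any active constraint $|p^{*}-f(y)|=|x-y|$ is violated by moving to \emph{both} sides along the affine hull of $B(x)$; and if no constraint were active, $B(x)$ would contain a full ball). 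Granting this, the optimality condition $0\in\textup{conv}\{(p^{*}-f(y))/|p^{*}-f(y)|:y\ \text{active}\}$ gives $\sum_i\mu_i(p^{*}-f(y_i))=0$, and then the expansion of $|\sum_i\mu_i(p^{*}-f(y_i))|^2$ together with $|f(y_i)-f(y_j)|\leq|y_i-y_j|$ yields, in that order, $\sum_i\mu_i(x-y_i)=0$ and the pairwise rigidity; the two vanishing sums do not come out of Carath\'eodory simultaneously. One small slip in $(a)\Rightarrow(b)$: the first hitting point $x'$ of $[x,y]$ with $K$ lies in $K$, so $[x,x']\not\subset\R^n\setminus K$; you need to stop at some $x''\in(x,x')$ (a trivial fix).

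The genuine gap is in $(b)\Rightarrow(a)$, and you correctly flag it yourself but do not close it. An iteration that adds points $x_1,x_2,\dots$ one at a time does not obviously produce an LSSE map: as constraints from earlier $x_j$'s accumulate, the admissible body at $x_k$ shrinks, the Chebyshev radius can degenerate to zero, and the local Lipschitz constant on a fixed $A_m\subset\subset\R^n\setminus K$ can creep up to $1$; nothing in your sketch controls this. The paper avoids the iteration entirely. For each $x\notin K$ it uses the uniform gap on the compact $K$ to upgrade $|p_x-f(y)|<|x-y|$ to $|p_x-f(y)|<|z-y|$ for all $z\in B_{\delta_x}(x)$, so that the map equal to $f$ on $K$ and identically $p_x$ on $B_{\delta_x}(x)$ is still short. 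It then picks countably many $x_i$ with $\R^n\setminus K=\bigcup_i B_{\delta_{x_i}}(x_i)$, takes Kirszbraun extensions $F_{x_i}$ of these partial maps, and sets $h:=\sum_i 2^{-i}F_{x_i}$. This is a single convex combination rather than an iteration: each $F_{x_i}$ is $1$-Lipschitz, $F_{x_i}$ is constant on $B_{\delta_{x_i}}(x_i)$, so the local Lipschitz constant of $h$ there is at most $1-2^{-i}$, and a finite subcover of any compact $A\subset\subset\R^n\setminus K$ gives $\Lip(h|_A)<1$. To repair your proof you should replace the inductive scheme by this superposition.
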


\begin{proof}
To prove the equivalence between (a) and (b), assume that $h$ is a locally strictly short extension of $f$. Then, it follows from Definition~\ref{d.LSSE} that $p_x:=h(x)$ fulfills \eqref{e.point ext}. Conversely, if \eqref{e.point ext} holds, for every $x\notin K$ there exists $\delta_x>0$ such that
\begin{equation}\label{e.loc constant}
|p_x-f(y)|<|z-y|\qquad\forall\;y\in K\quad\text{and}\quad\forall\;z\in B_{\delta_x}(x)\subset \R^n\setminus K.
\end{equation}
For every $x\notin K$, we define the functions $f_x$ by
\[
f_x(w):=
\begin{cases}
f(w) & \text{if } w\in K,\\
p_x & \text{if } w\in B_{\delta_x}(x),
\end{cases}
\]
and consider $F_x$ an arbitrary $1$-Lipschitz extensions to the whole $\R^n$ given by Kirszbraun's Theorem \cite[2.10.43]{FedBook}.
Since $\R^n\setminus K$ is locally compact, there exist countably many $x_i$ such that
\[
\R^n\setminus K=\bigcup_{i=1}^\infty B_{\delta_{x_i}}(x_i).
\]
Setting $h:=\sum_i 2^{-i} F_{x_i}$, it is immediate to verify from \eqref{e.loc constant} that $h$ is a locally strictly short extension of $g$.

To show the equivalence between (b) and (c), note that, if the maps $x\mapsto q$ and $x\mapsto q'$ are two different extensions to $K\cup\{x\}$, then $p_x := \tfrac{q+q'}{2}$ satisfies \eqref{e.point ext}.
Vice versa, if \eqref{e.point ext} holds, then the continuous function
\[
\Phi(y):=\frac{|p_x-f(y)|}{|x-y|}
\]
satisfies $\max_{K}\Phi=1-\eta$ for some $\eta>0$.
Then, for every $z\in B_\delta(p_x)$ with $\delta\leq\tfrac{\eta}{2}\,\textup{dist}(x,K)$, the extension of $f$ given by $x\mapsto z$ is a $1$-Lipschitz extension of $f$:
\[
\frac{|z-f(y)|}{|x-y|}\leq \frac{|p_x-f(y)|+\delta}{|x-y|}\leq 1-\eta+\frac{\delta}{|x-y|}<1 \quad \forall\; y\in K.
\]
Note that, we have actually proven that (b) fails in a point $x$ if and only if (c) fails in the same point $x$.

\medskip

So far we have proved the equivalence of (a), (b) and (c). 
Next, it is clear that (b) implies (d).

To show the converse, we argue by contradiction and assume that (d) holds but (c) not, i.e. there exists $x\notin K$ such that $f$ admits a unique extension $\bar{f}:K\cup\{x\}\to\R^n$. Let $\bar{f}(x)=p_x$ and set 
\[
H := \{y\in K : |f(y)-p_x|=|y-x| \}.
\]
Note that $H$ is compact and, by the failure of (b) in $x$, $H \neq \emptyset$. Two cases can occur:
\begin{itemize}
 \item[(i)] $p_x \notin \textup{conv}(f(H))$;
 \item[(ii)] $p_x \in \textup{conv}(f(H))$.
\end{itemize}
In case (i), since $\textup{conv}(f(H))$ is compact, there exists $\eps,\tau>0$, and $\nu \in \sS^{n-1}$ such that
\[
p_x\cdot \nu > 2\,\eps+ f(y)\cdot \nu \quad \forall \; y\in H_\tau \cap K,
\]
where $H_\tau$ denotes an open $\tau$-neighborhood of $H$.
Moreover, by compactness of $K\setminus H_\tau$, there exists a $\delta>0$ such that
\[
 |f(y)-p_x|+\delta\leq |x-y|\quad\forall\;y\in K\setminus H_\tau.
\]
An elementary computation shows that $x \mapsto q_x := p_x - \eta\,\nu$ is a new $1$-Lipschitz extension of $f$ to $K \cup \{x\}$ if $\eta$ is chosen accordingly. Indeed, we have
\begin{align*}
 |f(y)-p_x +\eta\,\nu|^2 & = |f(y)-p_x|^2 + \eta^2 + 2\,\eta\, (f(y)-p_x) \cdot \nu\\
& \leq |f(y)-p_x|^2 + \eta^2 - 4\,\eta\,\eps\\
&\leq |y-x|^2+ \eta^2 - 4\,\eta\,\eps
\qquad\qquad\qquad\qquad
\forall\; y\in H_\tau\cap K,
\end{align*}
and
\[
 |f(y)-p_x +\eta\,\nu| \leq |x-y| -\delta + \eta \qquad \forall\; y\notin H_\tau.
\]
Hence, it suffices to choose
\[
 \eta<\max\big\{\delta,4\,\eps\big\}.
\]
This contradicts the assumption that $\bar{f}$ is the only $1$-Lipschitz extension to $K\cup\{x\}$ and gives the desired conclusion in case (i).

In case (ii), let $l\in \N$ be the minimum integer with the following property: there exist $l$ points $\{y_1, \ldots, y_l\} =: H'\subset H$ such that $p_x\subset \textup{conv}(f(H'))$. We claim that
\begin{equation}\label{e.short}
|f(y_i)-f(y_j)|<|y_i-y_j| \quad \forall\; y_i,y_j\in H'.
\end{equation}
Indeed, assume this is not the case, e.g.~$|f(y_1)-f(y_2)| = |y_1-y_2|$.
Then, since $p_x = \sum_i \alpha_i\,f(y_i)$ for positive $\alpha_i$ with $\sum_i \alpha_i=1$ and $f\vert_{[y_1,y_2]}$ is affine, we can set
\[
 z := \frac{\alpha_1\,y_1+\alpha_2\,y_2}{\alpha_1+\alpha_2}.
\]
By \eqref{e.(P)}, $[y_1,y_2] \subset K$, thus implying in particular that $z \in K$. Moreover, by comparing the congruent triangles $\{y_1,y_2,x\}$ and $\{f(y_1),f(y_2),p_x\}$ we deduce that $z\in H$. Since it is moreover easy to see that $p_x \in \textup{conv}(f(\{z,y_3,\ldots, y_l\}))$, we obtain a contradiction with the assumption that $l$ was the least number satisfying the above property.

To conclude we note that \eqref{e.short} implies that there exists a strictly short extension of $f\vert_{H'}$ to $H'\cup \{x\}$, denoted by $F:H'\cup\{x\}\to \R^n$. Clearly, $F(x)\neq p_x$ by the definition of $H$. This leads to a contradiction and concludes the proof. Indeed, set $F(x)=:q_x$ and $\nu := \tfrac{q_x-p_x}{|q_x-p_x|}$. Since $p_x \in \textup{conv}(f(H'))$, there exists $y \in H'$ such that
\[
 p_x \cdot \nu \geq f(y) \cdot \nu,
\]
which in turns implies 
\[
 |f(y)-q_x|\geq|f(y)-p_x|=|y-x|,
\]
against $\Lip(F) < 1$.
\end{proof}

\section{Density}\label{s:density}

In this section we set, referring to the notation of Section \ref{s.approx}, 
$$
M=\R^n,
$$
and define, for every $x,y\in K$ and $\eps>0$, $E_\eps(x,y, K)$ to be the restriction of maps from $F_\eps(x,y,K)$ to $K$, i.e. 
\[
E_\eps(x,y, K) := \{ h \in \cX(K, \R^n) : \exists\; f\in F_\eps(x,y,K) \textrm{ s.t. } f\vert_K =h\}.
\]

Our aim is to prove the following density result.  

\begin{proposition}\label{p:density}
Let $K\subset \R^n$ be a compact set. Then, for every $x,y\in K$ and $\eps>0$, the set $E_\eps(x,y, K)$
is dense in  $\cX(K,\R^n)$.
\end{proposition}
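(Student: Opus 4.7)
Given $h \in \cX(K, \R^n)$ and $\rho > 0$, the task is to produce $\tilde h : K \to \R^n$ that is Euclidean $1$-Lipschitz, within $\rho$ of $h$ in the sup norm, and satisfies $\ell(\tilde h \circ \gamma) + \eps\, \ell(\gamma) > (1-\eps)\, d_K(x, y)$ for every $\gamma \in \Gamma_K(x, y)$. Since $\gamma \subset K$ implies $f \circ \gamma = \tilde h \circ \gamma$ for any $1$-Lipschitz extension $f$ of $\tilde h$ to $\R^n$, the condition defining $E_\eps$ depends only on $\tilde h|_K$; a Kirszbraun extension then witnesses $\tilde h \in E_\eps(x, y, K)$.

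\textbf{Calibration and oscillating perturbation.} The essential device is the intrinsic-distance function $\varphi(z) := d_K(x, z)$, for which the universal telescoping identity
\[
(\varphi \circ \gamma)(1) - (\varphi \circ \gamma)(0) = d_K(x, y) \qquad \forall\, \gamma \in \Gamma_K(x, y)
\]
holds, together with $|(\varphi \circ \gamma)'(t)| \leq |\gamma'(t)|$ for a.e.\ $t$. I would fix a unit vector $e \in \sS^{n-1}$ and a small $\alpha \in (0, 1)$, strictify $h$ to $h_0(z) := (1-\alpha)\, h(z) + \alpha\, h(x)$ (so $\Lip(h_0) \leq 1-\alpha$ and $\|h_0 - h\|_\infty \leq \alpha \, \diam(h(K))$), and add a high-frequency sawtooth $\phi : \R \to \R$ with $|\phi'| = \alpha$ a.e.\ and $\|\phi\|_\infty$ arbitrarily small, setting
\[
\tilde h(z) := h_0(z) + \phi(\varphi(z))\, e.
\]
Where $\varphi$ is locally Euclidean $1$-Lipschitz, the map $\tilde h$ is $1$-Lipschitz by the triangle inequality, and it is close to $h$ by construction.

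\textbf{Length estimate.} For any $\gamma \in \Gamma_K(x, y)$, projecting onto $e$ and exploiting the rapid oscillation of $\phi'$ against $(h_0 \circ \gamma)'$ (a Riemann--Lebesgue-type estimate),
\[
\ell(\tilde h \circ \gamma) \;\geq\; \int_0^1 |e \cdot (\tilde h \circ \gamma)'(t)|\, dt \;\approx\; \alpha \int_0^1 |(\varphi \circ \gamma)'(t)|\, dt \;\geq\; \alpha\, d_K(x, y),
\]
the last step being the calibration inequality from the telescoping identity. Adding $\eps\,\ell(\gamma) \geq \eps\, d_K(x, y)$, the required strict inequality is attained provided $\alpha \geq 1 - 2\eps$.

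\textbf{Main obstacle.} Two intertwined difficulties arise: (i) the function $\varphi = d_K(x, \cdot)$ is only $d_K$-Lipschitz, so when $K$ is non-convex the Euclidean Lipschitz estimate on $\tilde h$ fails globally; and (ii) demanding $\alpha \geq 1 - 2\eps$ forces $\|h_0 - h\|_\infty$ comparable to $(1-2\eps)\,\diam(h(K))$, which is not small for generic $h$. Both are resolved by a localization scheme: partition $K$ into small pieces on each of which $h$ is nearly affine and $d_K$ agrees with the Euclidean distance, apply the oscillating perturbation on each piece with parameters tuned to its local Lipschitz behaviour, and glue. The calibration identity for $\varphi$ is precisely what allows the local length estimates to combine into the uniform global bound valid for all $\gamma$, and is the point at which the method departs from classical Baire-category arguments for differential inclusions.
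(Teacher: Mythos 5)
Your high-level architecture — a $d_K$-Lipschitz calibration $\varphi$ with the telescoping identity $\varphi(y)-\varphi(x)=d_K(x,y)$, coupled with an oscillatory perturbation producing a lower bound on $\ell(\tilde h\circ\gamma)$ — is exactly the strategy of \S\ref{s:density}, and you correctly name both obstacles. But the ``localization scheme'' proposed at the end does not resolve them, and the real gap is in the design of the perturbation, not in the gluing. A scalar sawtooth $\phi(\varphi(z))\,e$ of slope $\pm\alpha$ added to a map $h_0$ contracted by the factor $(1-\alpha)$ yields, at best and only after a nontrivial equidistribution argument over the adversarially chosen $\gamma$, the bound $\ell(\tilde h\circ\gamma)\geq\alpha\,d_K(x,y)$, which forces $\alpha\geq 1-2\eps$ and hence $\|h_0-h\|_\infty\approx(1-2\eps)\,\diam(h(K))$. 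Localizing to small pieces shrinks $\|h_0-h\|$ on each piece, but the factor extractable from the sawtooth on each piece is still only $\alpha$ times the local distance, so after summing against a single global calibration you need $\alpha$ near $1$ on every piece — and then $h_0$ is still a strong contraction of $h$ there. This is a quantitative obstruction, not one that small pieces remove.

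What the paper does differently is twofold, and both ingredients are missing from your sketch. First, the perturbation is not a scalar oscillation in a fixed direction $e$ but a \emph{lamination} whose direction $\xi$ and amplitude $\zeta$ are chosen via Lemma~\ref{l:linalg}: given the local affine gradient $A$ with $A^TA\leq(1-\theta)I$, one picks $\xi$ so that $(A\pm\lambda\,\zeta\otimes\xi)^T(A\pm\lambda\,\zeta\otimes\xi)=A^TA+\xi\otimes\xi$ has eigenvalue $\geq 1-\theta$ along the calibration direction, with $\theta$ \emph{small}; this gives the pointwise segment estimate $|v(a)-v(b)|\geq\sqrt{1-\theta}\,|(a-b)\cdot e_1|$ with a factor near $1$ rather than near $\alpha$. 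Second, the calibration is not $d_K(x,\cdot)$ (which, as you note, is only $d_K$-Lipschitz and not Euclidean $1$-Lipschitz) but a smooth $\ph_k:\R^n\to\R$ built in Proposition~\ref{p:calib} whose $x_1$-derivative vanishes near the faces of the underlying triangulation; this is what makes the cutoff of each local lamination invisible to the telescoping sum in \eqref{e:l(h gamma)}. Without a perturbation that makes the gradient locally near-isometric (not merely of Lipschitz constant $\leq 1$) and a calibration adapted to the pieces' boundaries, the length bound cannot close.
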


\subsection{Single lamination}
In this section we show the basic lamination construction which will be used to increase distances in one direction.
We consider functions of the following form:
\[
w(x)=A \,x+\zeta\,h(x\cdot \xi),
\]
where $A\in \R^{n\times n}$, $\xi, \zeta\in\R^n$ and $h:\R\to\R$
is the $1$-periodic extension of the following piecewise
linear function with slopes $\lambda_1<0<\lambda_2$,
\begin{equation}\label{e:h}
h(t)=
\begin{cases}
\lambda_1\,t &\text{for }\;0\leq t\leq \frac{\lambda_2}{\lambda_2-\lambda_1},\\
\lambda_2\,(t-1) &\text{for }\;\frac{\lambda_2}{\lambda_2-\lambda_1}<t\leq 1.
\end{cases}
\end{equation}
Note that $w$ is Lipschitz and piecewise affine in parallel strips, with
\begin{equation}\label{e:grad lam}
\nabla w(x)=
\begin{cases}
A+\lambda_1\, \zeta\otimes \xi &\text{for }\;k< x\cdot \xi< k+\frac{\lambda_2}{\lambda_2-\lambda_1},\\
A+\lambda_2\, \zeta\otimes \xi &\text{for }\; k+\frac{\lambda_2}{\lambda_2-\lambda_1}< x\cdot \xi<k+1,
\end{cases}
\quad\text{for all }\;k\in\Z.
\end{equation}

In what follows, a simplex is defined to be the closed convex hull of $n+1$ affinely
independent points in $\R^n$,
$T:=\overline{co}\{x_0,\dots,x_n\}$, and its barycenter is the point $\bar{x}:=\frac{1}{n+1}\sum_{j=0}^nx_j$.

\begin{proposition}\label{p:lamination}
Let $T$ be a simplex and $u$ be a strictly short affine map on $T$,
with $\nabla u\equiv A$ and $A^TA\leq (1-\theta_0)\,I$ for some $0<\theta_0<1$.
Then, for every $0<\theta<\theta_0$ and $\eta>0$, there exists
$v\in\Lip(T,\R^n)$ such that:
\begin{itemize}
\item[(i)]  $v=u$ on $\de T$;
\item[(ii)] $\|u-v\|_{C^0(T)}\leq \eta$;
\item[(iii)] $\Lip(v)\leq 1-\frac{\theta}{4}$;
\item[(iv)] $(1-2\theta)\int_0^1 |\dot\gamma(t)\cdot e_1|dt \leq
\ell(v\circ \gamma)$
for every rectifiable $\gamma:[0,1]\to T_\eta$, where $T_\eta$ is the
$(1-\eta)$-rescaled simplex with the same barycenter as $T$.
\end{itemize}
\end{proposition}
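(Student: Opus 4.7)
My plan is to prove the proposition by an explicit one-dimensional lamination in the $e_1$-direction, combined with a Kirszbraun-type interpolation to match $u$ on $\partial T$. Concretely, I would set
\[
w(x) := Ax + b + \delta\,\zeta\, h\!\bigl(\tfrac{x \cdot e_1}{\delta}\bigr)
\]
for small $\delta > 0$, where $\zeta \in \R^n$ and the slopes $\lambda_1 < 0 < \lambda_2$ of $h$ are to be chosen. On each strip, $\nabla w \equiv B_i := A + \lambda_i \zeta \otimes e_1$. The parameters are selected so that (a) the Lipschitz bound $|B_i|_{\textup{op}} \leq 1-\theta/4$ holds, and (b) there exists a \emph{calibration vector} $\nu \in \R^n$ with $|\nu| \leq 1$ satisfying $B_i^T\nu = \mu_i e_1$ with $|\mu_i|\geq 1-2\theta$ for $i=1,2$. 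The identity $B_i^T\nu = A^T\nu + \lambda_i(\zeta\cdot\nu)e_1$ forces both $A^T\nu$ and $\zeta$ to belong to $\R e_1$, hence $\nu$ and $\zeta$ are proportional to $A^{-T}e_1$. Writing $\zeta = \beta\nu$ and $\alpha := (A^T\nu)\cdot e_1$, a direct computation gives $|B_i v|^2 = |Av|^2 + (\mu_i^2-\alpha^2)(v\cdot e_1)^2$, so (a) reduces to $(\mu_i^2-\alpha^2)^+ \leq (1-\theta/4)^2 - (1-\theta_0)$, which can be arranged using the slack between $\theta$ and $\theta_0$.

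To match $u$ on $\partial T$, I would fix a concentric shrunken simplex $T' \supset T_\eta$, set $v := w$ on $T'$, and use Kirszbraun's theorem to extend the combined data ($w|_{\partial T'}$ and $u|_{\partial T}$) to a $(1-\theta/4)$-Lipschitz map on $T \setminus T'$. Compatibility of these data with the Lipschitz constant $1-\theta/4$ follows from $\|w-u\|_{C^0(T)} \leq \delta|\zeta|\,\|h\|_\infty$ being much smaller than the gap $\bigl((1-\theta/4) - \sqrt{1-\theta_0}\bigr)\,\textup{dist}(\partial T', \partial T)$, which is ensured by taking $\delta$ small relative to $\eta$. Items (i)--(iii) then follow by construction, while (iv) is a consequence of the calibration: for any rectifiable $\gamma \subset T_\eta$,
\[
\ell(v\circ\gamma) \geq \textup{TV}(\nu\cdot v\circ\gamma) = \int_0^1 |\mu_{i(t)}|\,|\dot\gamma(t)\cdot e_1|\,dt \geq (1-2\theta)\int_0^1 |\dot\gamma(t)\cdot e_1|\,dt,
\]
using $\nabla(\nu\cdot w) = \mu_{i(t)} e_1$ in each strip and $|\nu|\leq 1$.

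The principal obstacle is the simultaneous verification of (a) and (b). Once $\zeta$ is constrained to be parallel to $A^{-T}e_1$, the scalar $\alpha = 1/|A^{-T}e_1|$ lies in $[\sigma_{\min}(A),\sigma_{\max}(A)]$ and may in general be close to $\sigma_{\min}(A)$, in which case the inequality $\mu_i^2 - \alpha^2 \leq (1-\theta/4)^2 - (1-\theta_0)$ with $|\mu_i|\geq 1-2\theta$ becomes tight or can even fail for a single lamination step. Handling this regime---when $e_1$ is nearly a smallest right-singular direction of $A$---is the delicate quantitative heart of the proposition and likely requires either a sharper estimate exploiting the full matrix structure of $A^TA$ or an auxiliary decomposition of $T$ into sub-simplices on which $A$ is effectively aligned with $e_1$.
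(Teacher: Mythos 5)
Your proposal oscillates in the fixed direction $e_1$ and then tries to produce a calibration vector $\nu$ with $B_i^T\nu=\mu_i e_1$, which, as you correctly observe, forces $\nu$ (hence $\zeta$) to be parallel to $A^{-T}e_1$ and introduces the uncontrolled scalar $\alpha=1/|A^{-T}e_1|\in[\sigma_{\min}(A),\sigma_{\max}(A)]$. You rightly flag this as the heart of the difficulty: when $e_1$ is close to a smallest right-singular direction of $A$, the system $|\mu_i|\ge 1-2\theta$ and $\mu_i^2-\alpha^2\le(1-\theta/4)^2-(1-\theta_0)$ is inconsistent, and the proposal as written does not close. So this is a genuine gap, but one you have located precisely.

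The paper's proof sidesteps the whole issue by choosing the \emph{oscillation direction} $\xi$ itself rather than oscillating in $e_1$. Set $B:=(1-\theta)I-A^TA\ge 0$ and, assuming $B_{11}>0$, take $\xi:=B e_1/\sqrt{B_{11}}$. An elementary rank-one computation (Lemma~\ref{l:linalg}) shows that the matrix $A^TA+\xi\otimes\xi$ has $e_1$ as an eigenvector with eigenvalue $1-\theta$, and $1-\theta$ is its top eigenvalue: equivalently $(1-\theta)|\zeta\cdot e_1|^2\le(A^TA+\xi\otimes\xi)\zeta\cdot\zeta\le(1-\theta)|\zeta|^2$ for all $\zeta$. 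One then laminates along $\xi$: with $\zeta=A^{-T}\xi$ and slopes $\lambda_i$ satisfying $2\lambda_i+\lambda_i^2|\zeta|^2=1$, the affine pieces $B_i=A+\lambda_i\zeta\otimes\xi$ satisfy $B_i^TB_i=A^TA+\xi\otimes\xi$ for \emph{both} $i$, so the induced metric is a single constant matrix on all strips. Items (iii) and (iv) then fall out directly: the upper eigenvalue bound gives $\Lip\le\sqrt{1-\theta}+O(1/\mu)\le 1-\theta/4$, and the lower bound $|B_i(q-p)|\ge\sqrt{1-\theta}\,|(q-p)\cdot e_1|\ge(1-2\theta)|(q-p)\cdot e_1|$ holds segment by segment, with no calibration vector needed (the case $\det A=0$ is handled separately with $\zeta\in\ker A^T$). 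The piece you are missing, then, is not a sharper estimate in your framework but the freedom to pick $\xi\neq e_1$; the linear-algebra lemma is precisely the device that makes the lamination compatible with a clean lower bound in the $e_1$-direction regardless of how $A$ is oriented. Your gluing step via Kirszbraun is also unnecessary in the paper: they simply multiply the oscillation by a cut-off $\psi\in C_c^\infty(T)$ with $\psi\equiv 1$ on $T_\eta$, which automatically gives $v=u$ on $\partial T$.
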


For the proof of the proposition we need the following elementary linear algebra lemma.

\begin{lemma}\label{l:linalg}
Let $A\in \R^{n\times n}$ and $\theta>0$ be such that $A^TA\leq (1-\theta)\,I$.
Then, there exists $\xi\in \R^n$ such that
\begin{equation}\label{e:linalg}
(1-\theta)\,|\zeta\cdot e_1|^2\leq
\big(A^TA+\xi\otimes\xi\big)\zeta\cdot\zeta\leq
(1-\theta)\,|\zeta|^2\quad\forall\,\zeta\in\R^n.
\end{equation}
\end{lemma}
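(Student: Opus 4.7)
The plan is to reformulate the claim as a rank-one perturbation statement and then write down $\xi$ explicitly. Setting $B := A^T A$ and $C := (1-\theta)\,I - B$, the hypothesis gives $C \geq 0$, and the two inequalities in \eqref{e:linalg} amount to the sandwich of symmetric matrices
\[
(1-\theta)\,e_1\otimes e_1 \;\leq\; B + \xi\otimes\xi \;\leq\; (1-\theta)\,I.
\]
The strategy is to choose $\xi$ so that the upper bound holds automatically and the quadratic form at $e_1$ is saturated; this forces $(\xi\cdot e_1)^2 = Ce_1\cdot e_1$. Concretely I would take
\[
\xi \;:=\; \frac{C\,e_1}{\sqrt{Ce_1\cdot e_1}}
\]
when $Ce_1\cdot e_1 > 0$, and $\xi := 0$ in the degenerate case $Ce_1 = 0$ (which is equivalent to $Ce_1\cdot e_1 = 0$ since $C\geq 0$).

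For the upper bound I would write $\xi = C^{1/2} w$ with $w := C^{1/2}e_1/|C^{1/2}e_1|$ a unit vector; then $\xi\otimes\xi = C^{1/2}(ww^T)C^{1/2} \leq C^{1/2}\,I\,C^{1/2} = C$, so $(\xi\cdot\zeta)^2 \leq C\zeta\cdot\zeta$ for every $\zeta$, which is exactly the Cauchy--Schwarz inequality for the semi-inner product $\langle u,v\rangle_C := Cu\cdot v$. This immediately yields $(B + \xi\otimes\xi)\zeta\cdot\zeta \leq (1-\theta)|\zeta|^2$. For the lower bound I would decompose $\zeta = \zeta_1 e_1 + \zeta^\perp$ with $\zeta^\perp\perp e_1$ and use $B = (1-\theta)\,I - C$; after expanding both $C\zeta\cdot\zeta$ and $(\xi\cdot\zeta)^2 = (Ce_1\cdot\zeta)^2/(Ce_1\cdot e_1)$, the cross-terms linear in $\zeta_1$ cancel and the required inequality reduces to
\[
C\,\zeta^\perp\cdot\zeta^\perp - \frac{(Ce_1\cdot\zeta^\perp)^2}{Ce_1\cdot e_1} \;\leq\; (1-\theta)\,|\zeta^\perp|^2,
\]
which is immediate from $C\leq (1-\theta)\,I$ once one drops the nonnegative subtracted term on the left. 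In the degenerate case $Ce_1 = 0$ one has $C\zeta\cdot\zeta = C\zeta^\perp\cdot\zeta^\perp$ outright, so $\xi=0$ works on the nose.

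The main (and essentially only) technical point is the cross-term cancellation in the lower-bound computation: it is precisely what dictates the normalization $\sqrt{Ce_1\cdot e_1}$ in the definition of $\xi$, and once this normalization is in place the rest of the argument is elementary matrix algebra. I do not anticipate a serious obstacle here; the lemma is really a clean finite-dimensional spectral fact, and the only choice that required any thought was the placement of the square root in the denominator of $\xi$.
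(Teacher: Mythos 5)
Your proof is correct and takes essentially the same route as the paper: you arrive at the same vector $\xi = Be_1/\sqrt{B_{11}}$ (with $B=(1-\theta)I-A^TA$, your $C$), and your Cauchy--Schwarz argument for $\xi\otimes\xi\leq C$ is exactly the paper's discriminant argument (nonnegativity of the quadratic $t\mapsto B(w+te_1)\cdot(w+te_1)$) in a different guise. The only presentational difference is in the lower bound: the paper first observes that $(A^TA+\xi\otimes\xi)e_1=(1-\theta)e_1$, so $e_1$ is an eigenvector and the decomposition $\zeta=te_1+w$ splits the quadratic form orthogonally with no cross terms to cancel, whereas you obtain the same cancellation by expanding both quadratics explicitly.
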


\begin{proof}
Let $B=(1-\theta)I-A^TA$, so that, by assumption, $B\geq 0$.
First consider the case $B_{11}>0$ and set
$\xi:=\frac{1}{\sqrt{B_{11}}}\,Be_1$.
We claim that
\begin{align}
(B-\xi\otimes\xi)e_1&=0,\label{e:linalg1}\\
(B-\xi\otimes\xi)w\cdot w&\geq 0\quad\forall\;w\in\R^n.\label{e:linalg2}
\end{align}
Indeed, \eqref{e:linalg1} follows directly from the definition of $\xi$. To see
\eqref{e:linalg2}, notice that $B\geq 0$ implies,
for any $t\in \R$ and any $w\in\R^n$,
\begin{equation}\label{e:linalg3}
B(w+te_1)\cdot(w+te_1)=t^2(Be_1\cdot e_1)+2\,t\,(Be_1\cdot w)+(Bw\cdot w)
\geq 0.
\end{equation}
The fact that the above quadratic expression in $t$ is nonnegative is equivalent
to
\[
(Bw\cdot w)(Be_1\cdot e_1)-(Be_1\cdot w)^2\geq 0.
\]
On the other hand, by direct calculation
\[
(B-\xi\otimes\xi)w\cdot w=B_{11}^{-1}\left((Bw\cdot w)(Be_1\cdot e_1)-(Be_1\cdot
w)^2\right),
\]
thus leading to \eqref{e:linalg2}.
Similarly, if $B_{11}=0$, we set $\xi=0$. Then, \eqref{e:linalg1} and
\eqref{e:linalg2} still hold: indeed, the latter is trivially true by the
assumption on $A$ and the former follows from \eqref{e:linalg3} being $w$ and
$t$ arbitrary.

To conclude the proof of the lemma, note that \eqref{e:linalg1} and
\eqref{e:linalg2}
are equivalent to
\begin{align*}
\big(A^TA+\xi\otimes\xi\big)\,e_1&=(1-\theta) \, e_1,\\
\big(A^TA+\xi\otimes\xi\big)\,w\cdot w&\leq (1-\theta) \,
|w|^2\quad\forall\;w\in\R^n.
\end{align*}
Therefore, for a general $\zeta=t\,e_1+w$ with $w\perp e_1$, \eqref{e:linalg}
follows:
\[
(1-\theta)\,t^2\leq \big(A^TA+\xi\otimes\xi\big)\zeta\cdot\zeta=
(1-\theta)\,t^2+\big(A^TA+\xi\otimes\xi\big)w\cdot w\leq
(1-\theta)\,(t^2+|w|^2).
\]
\end{proof}

\begin{proof}[Proof of Proposition \ref{p:lamination}]
We show that a suitable truncation of a single lamination satisfies the conclusion of the
proposition. Fix $0<\theta<\theta_0$ and $\eta>0$, and note that $A^TA\leq (1-\theta)\,I$.
We split into two cases, depending on whether $\det A=0$ or $\det A\neq 0$.

\medskip

{\bf The case $\det A\neq 0$.} 
Let $\xi$ be the vector given by Lemma~\ref{l:linalg} and
consider $\zeta\in\R^n$ and $\lambda_1<0<\lambda_2$ such that
\begin{gather*}
\zeta=A^{-T}\xi\quad\text{and}\quad 2\,\lambda_i+\lambda_i^2\,|\zeta|^2=1,\quad
\text{if}\quad \det A\neq0,
\end{gather*}
Choose a cut-off function $\psi:T\to[0,1]$, $\psi\in C_c^\infty(T)$, such that
$\psi\equiv1$ on $T_\eta$
and fix a periodic piecewise affine functions $h$ with slopes $\lambda_1$
and $\lambda_2$ as in \eqref{e:h}.
We claim that, for $\mu$ large enough, the map
\[
v(x)=u(x)+\frac{\zeta}{\mu}\,h(\mu\,x\cdot \xi)\,\psi(x)
\]
satisfies the conclusions of the lemma.

Clearly, (i) follows from $\psi\in C_c^\infty(T)$.
Moreover, since $\|u-v\|_{C^0}\leq
\frac{\|h\|_{C^0}|\zeta|}{\mu}$,
choosing $\mu>\frac{\|h\|_{C^0}|\zeta|}{\theta}$,
also (ii) follows.
Next, notice that, by the choice of $\zeta$, for almost every $x\in T$,
\begin{align*}
\nabla v(x)^T\,\nabla v(x)=&{}
A^T A+ \big(h'(\mu\,x\cdot \xi)\,\psi(x)\big)\, A^T \zeta\otimes
\xi+\big(h'(\mu\,x\cdot \xi)\,\psi(x)\big)\,\xi\otimes
A^T\zeta+\\
&+\big(h'(\mu\,x\cdot \xi)\,\psi(x)\big)^2\,|\zeta|^2\,\xi\otimes
\xi+E_\mu(x)\notag\\
=&{}
A^T A+\Big(2\,h'(\mu\,x\cdot \xi)\,\psi(x)
+\big(h'(\mu\,x\cdot \xi)\,\psi(x)\,|\zeta|\big)^2\Big)\,\xi\otimes
\xi+E_\mu(x),
\end{align*}
where $E_\mu(x)$ is an error satisfying $\|E_\mu\|_{C^0}\leq
\frac{C_0}{\mu}$, for some $C_0$ depending on $h,\psi,\zeta$.
Hence, since $h'=\lambda_i$ and $0\leq\psi\leq1$,
\[
2\,h'(\lambda\,x\cdot \xi)\,\psi(x)
+\big(h'(\lambda\,x\cdot \xi)\,\psi(x)\,|\zeta|\big)^2\leq1\quad
\text{for a.e.~}\;x\in T.
\]
Then, for $\frac{C_0}{\mu}< \theta/2$, (iii) follows from the convexity of
$T$, since
\begin{align*}
\Lip(v)^2&=\esssup_{x\in T}\sup_{|\eta|=1}|\nabla v(x)\,\eta|^2=
\esssup_{x\in T}\sup_{|\eta|=1}
\left(\nabla v(x)^T\nabla v(x)\,\eta\cdot\eta\right)\\
&\leq \sup_{|\eta|=1}\big((A^T A+\xi\otimes \xi)\eta\cdot\eta\big)+
\sup_{x\in T}|E_\mu(x)|\leq 1-\theta+\frac{C_0}{\mu}<1-\frac{\theta}{2}\\
&\leq \left(1-\frac{\theta}{4}\right)^2.
\end{align*}

To prove (iv), let $\gamma:[0,1]\to T_\theta$ be a rectifiable curve and let
\[
0=t_0<t_1<\dots<t_N=1 
\]
be any partition of the interval $[0,1]$.
By adding more points if necessary, since $v$ is a single lamination in
$T_\eta$, we may assume that the restriction of $v$
onto each interval $\left[\gamma(t_j),\gamma(t_{j+1})\right]$ is affine.
Moreover, by the explicit formula \eqref{e:grad lam},
\[
v(\gamma(t_{j+1}))-v(\gamma(t_j))=\big(A+\lambda_i\,\zeta\otimes\xi\big)\big(\gamma(t_{j+1})-\gamma(t_j)\big),
\]
where $\lambda_i$ is chosen depending on which strip the segment lies in and,
in case the segment lies on the boundary of a strip,
i.e.~$\xi\cdot\big(\gamma(t_{j+1})-\gamma(t_j)\big)=0$, any value can be taken.
Therefore, in both cases, using \eqref{e:linalg} and
\begin{align*}\label{e:metric}
(A+\lambda_i\, \zeta\otimes \xi)^T(A+\lambda_i \,\zeta\otimes \xi)
&=
A^T A+ \lambda_i\, A^T \zeta\otimes \xi+\lambda_i\,\xi\otimes
A^T\zeta+\lambda_i^2\,|\zeta|^2\,\xi\otimes \xi\notag\\
&=A^T A+\xi\otimes \xi,
\end{align*}
we have
\begin{align*}
\left|v(\gamma(t_{j+1}))-v(\gamma(t_j))\right|&\geq
\sqrt{1-\theta}\left|(\gamma(t_{j+1})-\gamma(t_j))\cdot e_1\right|\\
&\geq (1-2\,\theta)\left|(\gamma(t_{j+1})-\gamma(t_j))\cdot e_1\right|.
\end{align*}
Summing and refining the partition ad infinitum, since the integral in (iv) is
the total variation of the curve $\gamma\cdot e_1$, we conclude the proof in
the case $\det A\neq 0$.

\medskip

{\bf The case $\det A=0$.}
In this case we consider
\[
\zeta\in\rm{Ker}(A^T)\quad\text{and}\quad-\lambda_1=\lambda_2=|\zeta|=1.
\]
Then, for $h$, $\psi$ and $v$ as above, we have for almost every $x\in T$,
\begin{align*}
\nabla v(x)^T\,\nabla v(x)=&{}
A^T A+ \big(h'(\mu\,x\cdot \xi)\,\psi(x)\big)\, A^T \zeta\otimes
\xi+\big(h'(\mu\,x\cdot \xi)\,\psi(x)\big)\,\xi\otimes
A^T\zeta+\\
&+\big(h'(\mu\,x\cdot \xi)\,\psi(x)\big)^2\,|\zeta|^2\,\xi\otimes
\xi+E_\mu(x)\notag\\
&{}=A^T A+\psi(x)^2\,\xi\otimes \xi+E_\mu(x),
\end{align*}
where $E_\mu(x)$ is again an error satisfying $\|E_\mu\|_{C^0}\leq
\frac{C_0}{\mu}$, for some $C_0$ depending on $h,\psi,\zeta$.
Since $0\leq \psi\leq 1$ and $\psi\equiv 1$ in $T_\eta$, the estimates
(i)-(iv) follows in the same way as before.
\end{proof}

\bigskip

\subsection{Triangulation and approximation of short maps}
In this subsection we construct a calibration in order to obtain sufficient control on curves in $\Gamma_K(x,y)$.
We start by proving an elementary result on piecewise affine approximations on triangulations.

Let $T=\overline{co}\{x_0,\ldots, x_n\}$ be a simplex and $\bar x$ its barycenter.
Given $u:T\to\R^n$, the affine interpolation of $u$ in $T$ is the function
\[
\bar u(x)=u(x_0)+A\,(x-x_0),
\]
where $A\in \R^n$ is such that $\bar u(x_i)=u(x_i)$ for every $i$ ($A$
always exists and is unique because the points $x_i$ are affinely independent).
Note that not every affine interpolation of a short map is short.
Consider, for example, the map $u:\R^2\to\R^2$, $u(x)=(|x|,0)$, and the simplex $T$ of vertices
$x_0=0$, $x_1=\frac{(1,\sqrt{3})}{2}$ and $x_2=\frac{(1,-\sqrt{3})}{2}$.
It turns out that the affine interpolation of $u$ in $T$ is given by
\[
\bar u(x)=
\left(
\begin{array}{cc}
2 & 0\\
0 & 0
\end{array}
\right)
x,
\]
so that $\Lip(\bar u)= 2$, although $u$ is short.

The following lemma provides a bound for the Lipschitz constant of $u-\bar u$.

\begin{lemma}\label{l:lip}
Let $T$ be a simplex and $r_1,r_2>0$
be such that $B_{r_1}(\bar x)\subset T\subset B_{r_2}(\bar x)$.
For every $u\in C^2(T,\R^n)$, the affine interpolation $\bar u$ in $T$ satisfies
\begin{equation}\label{e:lip}
\Lip(u-\bar u)\leq \frac{4\,r_2^2}{r_1}\, \|\nabla^2u\|_{C^0(T)},
\end{equation}
where
\[
\|\nabla^2 u\|_{C^0(T)}=\max_{x\in T}\sqrt{\sum_{i,j,l=1}^n\left(\frac{\de^2 u_l}{\de x_i\de x_j}(x)\right)^2}.
\]

\end{lemma}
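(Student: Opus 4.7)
My plan is to analyze $w := u - \bar u$ directly. Since $\bar u$ is affine, $\nabla^2 w \equiv \nabla^2 u$ on $T$, so the Hessian of $w$ is controlled by $M := \|\nabla^2 u\|_{C^0(T)}$ in exactly the (Frobenius-type) sense of the statement; moreover, by construction $w$ vanishes at every vertex $x_0,\dots,x_n$. The three steps are: (i) derive a $C^0$-bound on $w$ from the vanishing at the vertices, (ii) convert this into a pointwise bound on $\nabla w(\bar x)$ by a one-dimensional Landau-type inequality along a diameter of $B_{r_1}(\bar x) \subset T$, and (iii) propagate the bound to all of $T$ via the Lipschitz continuity of $\nabla w$.

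For (i), fix $y \in T$ and write $y = \sum_{i=0}^n \lambda_i x_i$ in barycentric coordinates, with $\lambda_i \geq 0$ and $\sum_i \lambda_i = 1$. Taylor's theorem applied to $w$ at $y$ and evaluated at each vertex yields
\[
0 = w(x_i) = w(y) + \nabla w(y)(x_i - y) + R_i, \qquad |R_i| \leq \tfrac{M}{2}|x_i - y|^2 \leq 2 M r_2^2,
\]
where I have used $T \subset B_{r_2}(\bar x)$, hence $\diam(T) \leq 2 r_2$. Multiplying by $\lambda_i$ and summing, the first-order term vanishes because $\sum_i \lambda_i(x_i - y) = 0$, and one is left with $|w(y)| \leq 2 M r_2^2$.

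For (ii), fix unit vectors $e, \eta \in \R^n$ and set $g(t) := w(\bar x + te) \cdot \eta$ for $t \in [-r_1, r_1]$; the segment is admissible thanks to $B_{r_1}(\bar x) \subset T$. A direct calculation using Cauchy--Schwarz on the definition of $M$ gives $|g''(t)| \leq M$, while step (i) supplies $|g(t)| \leq 2 M r_2^2$. Subtracting the two Taylor expansions of $g$ at $\pm r_1$ around $0$ produces the classical scalar bound
\[
|g'(0)| \leq \frac{\|g\|_\infty}{r_1} + \frac{r_1}{2}\|g''\|_\infty \leq \frac{2 M r_2^2}{r_1} + \frac{M r_1}{2}.
\]
Since $g'(0) = (\nabla w(\bar x)\, e) \cdot \eta$, taking suprema over unit $e$ and $\eta$ bounds the operator norm of $\nabla w(\bar x)$ by the right-hand side.

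Step (iii) is then immediate: the fundamental theorem of calculus applied to $\nabla w$ gives $|\nabla w(x) - \nabla w(\bar x)| \leq M|x - \bar x| \leq M r_2$ for every $x \in T$. Using $r_1 \leq r_2$, both $r_1/2$ and $r_2$ are at most $r_2^2/r_1$, so combining everything yields
\[
|\nabla w(x)| \leq \frac{2 M r_2^2}{r_1} + \frac{M r_1}{2} + M r_2 \leq \frac{4 M r_2^2}{r_1},
\]
which is precisely \eqref{e:lip}. The only delicate point is step (ii): the inclusion $B_{r_1}(\bar x) \subset T$ is used in an essential way to provide a segment of length $2 r_1$ in every direction, so that a scalar interpolation inequality can convert a $C^0$ bound into a gradient bound. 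Everything else is routine Taylor bookkeeping, and the final arithmetic hinges on $r_1 \leq r_2$ to absorb the lower-order term $Mr_1/2$ into the leading $Mr_2^2/r_1$.
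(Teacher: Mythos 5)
Your proof is correct, but it follows a genuinely different route from the paper's. The paper proves a one-shot linear-algebra inequality: for \emph{every} matrix $B$, the operator norm of $B-\nabla \bar u$ is bounded by $\frac{2r_2}{r_1}\Lip(u-L_B)$, where $L_B(x)=Bx$; this is obtained by intersecting the line through the barycenter in the direction realizing $|B-\nabla\bar u|$ with $\de T$, writing the two intersection points in barycentric coordinates, and using that $\bar u$ agrees with $u$ at the vertices. Specializing $B=\nabla u(y)$ at the maximizing $y$ and estimating $\Lip(u-L_B)\le 2r_2\,\|\nabla^2 u\|$ by the mean value theorem on $\nabla u$ then gives the constant $4r_2^2/r_1$ directly, with no $C^0$ estimate at all. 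Your pipeline instead goes through (i) a uniform bound $\|u-\bar u\|_{C^0}\le 2Mr_2^2$ obtained from Taylor expansion at the vertices and barycentric averaging (the first-order term cancels since $\sum_i\lambda_i(x_i-y)=0$), (ii) a one-dimensional Landau-type inequality along a diameter of $B_{r_1}(\bar x)$ to bound $\nabla(u-\bar u)(\bar x)$, and (iii) propagation via the $M$-Lipschitz continuity of $\nabla(u-\bar u)$, with the final arithmetic absorbing $Mr_1/2 + Mr_2$ into $2Mr_2^2/r_1$ using $r_1\le r_2$. Both are correct and land on the same constant; yours is a bit longer but more elementary, and it yields the sup-norm estimate $\|u-\bar u\|_{C^0}\le 2r_2^2\|\nabla^2u\|$ as a useful intermediate by-product, while the paper's is terser and never leaves the level of gradients.
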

\begin{proof}
Let $A=\nabla \bar u$. For every $B\in\R^{n\times n}$, denote by $L_B$ the
linear map given by $L_B(x)=B\,x$ and denote by $|B|=\max_{|\eta|=1}|B\,\eta|$
the operator norm.
We claim that
\begin{equation} \label{e:lip2}
|B-A|\leq \frac{2\,r_2}{r_1}\,\Lip(u-L_B).
\end{equation}
Indeed, let $\eta\in\sS^{n-1}$ be such that $|B-A|=(B-A)\,\eta$ and consider the
line $l_\eta=\{\bar x+t\,\eta:t\in\R\}$.
Clearly, by the convexity of $T$, $l_\eta$ intersects $\de T$ in two points,
\[
p=\sum_i \lambda_i \,x_i\quad\text{and}\quad
q=\sum_i \mu_i\,x_i,
\]
with $\lambda_i,\mu_i\geq0$ and $\sum_i\lambda_i=\sum_i\mu_i=1$.
Then, since $|p-q|\geq2\,r_1$, it follows that
\begin{align*}
|B-A|&=\frac{|(B-A)\,(p-q)|}{|p-q|}\leq\frac{|\sum_i(\lambda_i-\mu_i)
\,(B-A)(x_i-x_0)|}{2\,r_1}\\
&=\frac{|\sum_i(\lambda_i-\mu_i)\,\big\{(B\,x_i-u(x_i))-(B\,x_0-u(x_0)\big\}|}{2\,r_1}\\
&\leq \frac{\Lip(u-L_B)\sum_i(\lambda_i+\mu_i)|x_i-x_0|}{2\,r_1}
\leq\frac{2\,\Lip(u-L_B)\,\diam(T)}{2\,r_1}\\
&\leq \frac{2\,r_2}{r_1}\,\Lip(u-L_B).
\end{align*}
By convexity, for every $f\in C^1(T,\R^n)$,
\begin{equation}\label{e:f lip}
\Lip(f)=\max_{x\in T}|\nabla f(x)|.
\end{equation}
Set $B=\nabla u(y)$ such that
\[
\Lip(u-\bar u)=\max_{x\in T}|\nabla(u-\bar u)|=|B-A|.
\]
From \eqref{e:f lip} and \eqref{e:lip2}, we deduce \eqref{e:lip}:
\begin{align*}
\Lip(u-\bar u)&=|B-A| \stackrel{\eqref{e:lip2}}{\leq}
\frac{2\,r_2}{r_1}\,\Lip(u-L_B)\stackrel{\eqref{e:f lip}}{\leq}
\frac{2\,r_2}{r_1}\,\max_{x\in T}|\nabla u(x)-\nabla u(y)|\\
&\leq\frac{2 \,r_2}{r_1}\,\|\nabla^2 u\|_{C^0(T)}\,\max_{x\in T}|x-y|\leq
\frac{4 \,r_2^2}{r_1}\,\|\nabla^2 u\|_{C^0(T)}.
\end{align*}
\end{proof}

\begin{remark}
Actually, increasing the angle in $x_0$ in the example given above shows that
estimate \eqref{e:lip2} is optimal up to a multiplicative constant.
\end{remark}

In what follows, a triangulation $\mathcal{T}=\{T_i\}_{i\in\N}$ of $\R^n$ is defined
as a family of simplices such that $\cup_i T_i=\R^{n}$ and, for every $i\neq j$,
$T_i\cap T_j$ is a common face when not empty.
We call a triangulation periodic if there exist finitely many simplices
$T_1,\ldots,T_N$ such that $\mathcal{T}=\cup_{i=1}^N\{T_i+v: v\in \Z^{n}\}$.

Given a simplex $T=\overline{co}\{x_0,\ldots,x_n\}$, we consider the
$(n-1)$-dimensional supporting linear subspaces of its faces defined as follows:
for $\alpha=(\alpha_1,\ldots,\alpha_n)$ with $0\leq \alpha_1<\ldots<\alpha_n\leq n$, the corresponding supporting hyperplane is
given by
\[
V^T_{\alpha}=\text{Span}{\{x_{\alpha_2}-x_{\alpha_1},\ldots, x_{\alpha_n}-x_{\alpha_1}\}}.
\]
We denote by $\mathcal{N}_{\mathcal{T}}$ the set of all unit normals
to the supporting hyperplanes of simplices $T$ in $\mathcal{T}$,
\[
\mathcal{N}_{\mathcal{T}}=\big\{\nu \in \sS^{n-1}: \nu\perp V^T_\alpha\;
\text{for some }\alpha \,\text{and}\; T\in\mathcal{T} \big\}.
\]

\begin{proposition}\label{p:calib}
For every $\theta>0$, there exist $0<\delta<1$, a periodic triangulation
$\mathcal{T}$ and a function $\ph\in C^{\infty}(\R^n)$ such that:
\begin{gather}
\ph(l,\bar x)=l\qquad\forall\;\bar
x\in\R^{n-1},\;\forall\;l\in\Z;\label{e:Atri3}\\
0\leq \frac{\de \ph}{\de
x_j}(x)\leq\theta\qquad\forall\;x\in\R^n,\quad j\neq 1;\label{e:Atri2}\\
0\leq\frac{\de \ph}{\de
x_1}(x)\leq\frac{1}{1-\theta}\quad\forall\;x\in\R^n
\quad\text{and}\quad
\frac{\de \ph}{\de x_1}(x)=0\quad\forall\;x\in
\mathcal{F}_{\delta},\label{e:Atri1}
\end{gather}
where $\mathcal{F}=\cup_{T\in\mathcal{T}}\de T$ is the union of the faces of the
simplices of
$\mathcal{T}$ and $\mathcal{F}_{\delta}$ denotes its open
$\delta$-neighborhood.
\end{proposition}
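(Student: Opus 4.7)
The plan is to construct the triangulation $\mathcal{T}$ and the calibration $\varphi$ explicitly, and then to verify each of the three conditions in turn. The strategy is to design $\mathcal{T}$ so that all of its ``diagonal'' (non-horizontal) faces are at uniformly controlled angles relative to the $e_1$-axis, and then to build $\varphi$ as a smoothed staircase function in $x_1$ whose $x_1$-derivative is supported away from $\mathcal{F}_\delta$.

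First I would fix a periodic triangulation $\mathcal{T}$ of $\R^n$ whose period lattice contains $\Z^n$ and which has the following structure: (a) the integer hyperplanes $\{x_1=l\}$ are unions of $(n-1)$-faces of $\mathcal{T}$; (b) every other (``diagonal'') face $F$ has a unit normal $\nu_F$ satisfying $|\nu_F\cdot e_1|\geq c_0$ for some $c_0>0$ and $|\nu_F\cdot e_j|\leq \theta_0\,|\nu_F\cdot e_1|$ for all $j>1$, where $\theta_0$ is a small constant proportional to $\theta$; and (c) the diagonal faces are uniformly separated from the integer hyperplanes by a distance at least $2\delta/c_0$ for the $\delta$ chosen below. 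Such a triangulation can be built by starting from a fixed ``thin'' simplex with the desired normal structure (for instance, in the plane, from the image of the triangle with vertices $(0,0),(1,1),(3,2)$ under the contraction $x_1\mapsto x_1/N$) and tiling by its $\Z^n$-translates, inserting narrow horizontal buffer strips around each $\{x_1=l\}$ to keep the diagonal faces at positive distance from those hyperplanes.

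Next, choose $\delta>0$ small enough that $\mathcal{F}_\delta$ intersects every line parallel to $e_1$ in a set of one-dimensional measure at most $\theta/2$ per unit length, and such that the transition regions $\{|d_F|\leq 2\delta\}$ for different faces $F$ are pairwise disjoint. Fix a smooth cutoff $\chi\colon\R\to[0,1]$ with $\chi\equiv 0$ on $[-1,1]$ and $\chi\equiv 1$ outside $[-2,2]$, and set $\eta(x):=\prod_F \chi(d_F(x)/\delta)$, where $d_F$ is the signed distance to the affine span of $F$. Then $\eta\in C^\infty(\R^n,[0,1])$ vanishes on $\mathcal{F}_\delta$ and equals $1$ outside $\mathcal{F}_{2\delta}$, and the periodicity of $\mathcal{T}$ makes $\Xi(\bar x):=\int_0^1\eta(t,\bar x)\,dt$ independent of $\bar x$ and at least $1-\theta/2$. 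Define
\[
\varphi(x_1,\bar x):=\int_0^{x_1}\frac{\eta(t,\bar x)}{\Xi}\,dt.
\]
Then \eqref{e:Atri3} follows from the $e_1$-periodicity of $\eta$, \eqref{e:Atri1} from $\partial_{x_1}\varphi=\eta/\Xi$ and the lower bound on $\Xi$, and the upper bound in \eqref{e:Atri2} follows from the identity $\partial_{x_j}\varphi=\Xi^{-1}\int_0^{x_1}\partial_{x_j}\eta(t,\bar x)\,dt$ together with a change of variables that shows each face-crossing of the vertical line contributes a quantity of absolute value at most $|\nu_F\cdot e_j|/|\nu_F\cdot e_1|\leq\theta_0$.

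The hard part is establishing the \emph{non-negativity} of $\partial_{x_j}\varphi$ pointwise. Each face-crossing contributes a term whose sign is that of $(\nu_F\cdot e_j)/(\nu_F\cdot e_1)$; fully contained crossings in $[0,x_1]$ cancel, but ``partial'' crossings near the endpoints $t=0$ and $t=x_1$ survive and give signed contributions. To ensure the resulting sum is non-negative at every point, I would require $\mathcal{T}$ to satisfy the sign condition $(\nu_F\cdot e_j)(\nu_F\cdot e_1)\leq 0$ on every diagonal face and every $j>1$, and simultaneously use the buffer strips from (c) above to prevent any diagonal face from producing a partial crossing at the lower endpoint $t=0$. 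The sign condition can be realized in the plane by a suitable shear of the triangle used above, and in higher dimensions by an analogous combinatorial construction. Verifying that all three structural conditions on $\mathcal{T}$ can be met simultaneously in arbitrary dimension is the most delicate step of the proof.
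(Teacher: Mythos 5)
Your proposal shares the paper's skeleton --- a periodic triangulation transversal to $e_1$, a smooth cutoff vanishing on a neighbourhood of the skeleton, and $\varphi$ obtained by integrating the cutoff in the $x_1$--direction and normalising --- but the two arguments diverge on both of the genuinely nontrivial points, and your version has a gap and an error.

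\textbf{Construction of the triangulation.} You posit a periodic triangulation with very strong structural properties (``steep'' normals with $|\nu_F\cdot e_j|\le \theta_0|\nu_F\cdot e_1|$, a sign condition $(\nu_F\cdot e_j)(\nu_F\cdot e_1)\le 0$, and buffer strips around the integer hyperplanes), and you acknowledge that showing all of these can be met simultaneously in every dimension ``is the most delicate step.'' This is precisely where your argument is incomplete. The paper avoids the issue entirely: it starts from \emph{any} periodic triangulation of $[0,\tfrac12]^n$, shears by the piecewise-linear map $x\mapsto x+h(x_1)w$ with $w\perp e_1$, and observes that periodicity makes $\cN_{\mathcal S}$ finite, so for $\cH^{n-1}$-a.e.\ $w$ the sheared triangulation has no face orthogonal to $e_1$. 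Nothing more is required of the triangulation. The smallness of the transverse derivatives is then achieved analytically: having built a bounded $\nabla\psi$, one rescales $\varphi(x):=k^{-1}\psi(kx_1,\bar x)$ and for large $k$ both the transverse derivatives and the neighbourhood size $\delta_k=\gamma/k$ become as small as needed. This decoupling --- transversality is all the triangulation must supply, smallness comes from rescaling --- is the key idea you are missing, and it is what makes the higher-dimensional case painless.

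\textbf{The normalisation.} You assert that periodicity of $\mathcal T$ makes $\Xi(\bar x)=\int_0^1\eta(t,\bar x)\,dt$ independent of $\bar x$. This is false: different lines parallel to $e_1$ meet the $\delta$-tube around $\cF$ in sets of different measure (compare a line through a vertex with one through the interior of a simplex). The paper's $f(1,\bar x)$ is also $\bar x$-dependent; they only bound it below by $1-C\gamma$. Once $\Xi$ depends on $\bar x$, the quotient rule adds a term $\propto \partial_{x_j}\Xi$ to $\partial_{x_j}\varphi$, which your face-crossing estimate does not address. (The paper absorbs all such terms at once by bounding $\|\nabla\psi\|_{C^0}$ and then rescaling.)

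\textbf{The non-negativity of $\partial_{x_j}\varphi$ for $j\neq 1$.} You correctly identify this as the hard part of the literal statement and devote the bulk of your effort to it. But the paper's own proof never establishes it --- equations \eqref{e:c2} and \eqref{e:tri2} are absolute-value bounds --- and, more importantly, it is never used: the estimate \eqref{e:l(h gamma)} in the proof of Proposition~\ref{p:density} uses only $|\partial_{x_j}\varphi_k|\le\theta$. The one-sided inequality in \eqref{e:Atri2} is a harmless overstatement of what is actually proved and needed, so the sign condition and buffer strips you introduce to secure it are an unnecessary (and unverified) complication. Dropping non-negativity for $j\neq 1$ and following the shear-plus-rescale route would give you a complete and much shorter proof.
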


\begin{proof}

\noindent{\bf Step 1: The existence of a transversal triangulation. }
We start showing the existence of a periodic triangulation $\mathcal{S}$ such that
every orthogonal vector $\nu\in \mathcal{N}_{\mathcal{S}}$ satisfies $\nu\cdot
e_1\neq0$, i.e.~such that $e_1$ is transversal to any supporting hyperplane.

To this aim, consider $\{T_1,\ldots,T_M\}$, a triangulation of $[0,1/2]^n$ which
can be extended to the whole $\R^n$ by
periodicity (that such triangulation does exist is a simple exercise), and set
\[
\mathcal{R}=\bigcup_{j=1}^M\{T_j+v/2:v\in\Z^n\}=\{R_i\}_{i\in\N}.
\]
For $w\in\R^n$ with $w\cdot e_1=0$, let $f_w:\R^n\to\R^n$ be the piecewise affine map given by, for
every $x=(x_1,\bar x)\in\R\times \R^{n-1}$,
\[
f_w(x)= x+h(x_1)\,w,
\]
where $h:\R\to\R$ is the 1-periodic extension of
\[
h(t)=
\begin{cases}
t &\text{if }\;0\leq t\leq \frac{1}{2},\\
1-t &\text{if }\;\frac{1}{2}<t<1.
\end{cases}
\]
Note that, $f_{w}\vert_{R_i}$ is linear for every simplex $R_i$ of
$\mathcal{R}$, so that $S_i=f_{w}(R_i)$ are also simplices.
Moreover, since $w\cdot e_1=0$, $f_w:\R^n\to\R^n$ is a periodic homeomorphism: for every integer vector $v\in\Z^n$, $f_w(x+v)=f_w(x)+v$.
Hence, $f_w(R_i+v)=f_w(R_i)+v$ implies that $\mathcal{S}=\{S_i\}_{i\in \N}$ is a
periodic triangulation of $\R^n$ as
well.

We claim that there exists $w \perp e_1$ such that $\mathcal{S}$ is
transversal to $e_1$.
Indeed, for every simplex $R_i$, $\nabla(f_{w}\vert_{R_i})=L_{\pm w}$, where $L_{{\pm} w}v=v{\pm} (v\cdot e_1)\,w$ and the sign is chosen
depending on the sign of $h'(x_1)$ for $x\in R_i$.
By simple linear algebra, using $L_{w}^{-1}=L_{-w}$ as $w\perp e_1$,
we infer that
\begin{equation*}
\cN_{\mathcal{S}}=\left\{L_{\pm w}^T\nu':\nu'\in\cN_{\mathcal{R}}\right\}.
\end{equation*}
Hence, $\nu\in \cN_{\mathcal{S}}$ is orthogonal to $e_1$
if and only if there exists $\nu'\in\cN_{\mathcal{R}}$ such that
\begin{equation}\label{e:generic}
0=\la L_{\pm w}^T\nu', e_1 \ra=\la \nu', L_{\pm w}e_1 \ra=\la \nu', e_1 \ra\pm\la \nu', w \ra.
\end{equation}
Now notice that, for a fixed $\nu'$ either the solutions $w\perp e_1$ satisfying
\eqref{e:generic} are affine $(n-2)$-dimensional subspaces or,
in the case $\nu'=e_1$, there are no solution.
Hence, relying on the fact that $\cN_{\mathcal{R}}$ is finite,
$\mathcal{R}$ being periodic,
one infers that for $\cH^{n-1}$-a.e.~$w\perp e_1$ no $\nu\in\cN_{\mathcal{S}}$
is orthogonal to $e_1$.

\bigskip

\noindent{\bf Step 2: Construction of a calibration.} From now on we fix a periodic transversal triangulation $\mathcal{S}=\{S_i\}_{i\in\N}$.
For every $\gamma>0$, we denote by $\cF_\gamma$ the open $\gamma$-neighborhood
of union of all faces of $\mathcal{S}$.
Consider the $C^\infty$ function $g:\R^n\to[0,1]$,
\[
g=\rho_{\gamma/2}*\chi_{(\R^n\setminus \cF_{3\gamma/2})},
\]
where $\rho\in C^\infty_c(B_1)$ is such that $\rho\geq0$, $\int\rho=1$ and,
as usual $\rho_r=r^{-n}\rho(\frac{x}{r})$.
Note that, since $\mathcal{F}$ is periodic, also $g$ is periodic and
\[
g\equiv 0\quad\text{on}\quad \cF_\gamma
\quad\text{and}\quad
g\equiv1\quad\text{on}\quad \R^n\setminus\cF_{2\gamma}.
\]
Set, for $x=(x_1,\bar x)\in\R\times \R^{n-1}$,
\[
f(x_1,\bar x):=\int_0^{x_1}g(t, \bar x) \,dt.
\]
Clearly $f$ is smooth and, by the periodicity of $g$, for $t\in[0,1)$ and
$l\in\Z$, (below the computation for $l\in\N$,
the other case being analogous), we have
\begin{align}\label{e:f-per}
f(t+l,\bar x)&=\int_0^{t+l}g(s,\bar x)\,ds=
\sum_{i=0}^{l-1}\int_{i}^{i+1}g(s,\bar x)\,ds+
\int_l^{t+l}g(s,\bar x)\,ds\notag\\
&=l\,f(1,\bar x)+f(t,\bar x).
\end{align}
For every $\bar x\in \R^{n-1}$,
setting $l_{\bar x}=\{(t,\bar x): 0\leq t\leq 1\}$, it holds
\begin{equation}\label{e:bound f}
f(1,\bar x)\geq 1- \cH^1(l_{\bar x}\cap \cF_{2\gamma}).
\end{equation}
Since no $\nu\in \cN_S$ is orthogonal to $e_1$,
each $l_{\bar x}$ intersects transversally
a bounded number of faces, so that there exists a constant $C>0$
such that $\cH^1(l_{\bar x}\cap \cF_{2\gamma})\leq C\gamma$ for every
$\gamma>0$.
By \eqref{e:bound f}, for $\gamma$ small enough, the function
\[
\psi(x_1,\bar x):=\frac{f(x_1,\bar x)}{f(1,\bar x)}
\]
is well defined and smooth.
From \eqref{e:f-per} it follows that
\[
\psi(l+t,\bar x)=l+\psi(t,\bar x).
\]
In particular, $\psi(l,\bar x)=l$ and $\nabla\psi$ is $\Z^n$-periodic with
$\frac{\de\psi}{\de x_1}(x)=f(1,\bar x)^{-1} g(x)$.
Therefore, from the choice of $g$, we have
\begin{equation}\label{e:psi}
0\leq \frac{\de\psi}{\de x_1}\leq\frac{1}{1-C\,\gamma}\quad\forall \; x\in\R^n
\quad\text{and}\quad
\frac{\de\psi}{\de x_1}(x)=0\quad\text{for}\;x\in\mathcal{F}_\gamma.
\end{equation}
Now, for every $k\in\N$, consider the horizontal rescaling $\tau_k:\R^n\to\R^n$ given by
$\tau_k(x_1,\bar x)=\left(\frac{x_1}{k},\bar x\right)$.
We claim that, for sufficiently large $k$,
\[
\ph(x):=k^{-1}\psi(k\,x_1,\bar x)
\quad\text{and}\quad \mathcal{T}=\{T_i\},
\;\text{with }\;T_i:=\tau_k(S_i),
\]
satisfy the conclusions of the proposition for a suitable $\delta$.
Indeed, $\mathcal{T}$ is clearly periodic and $\ph(l,\bar x)=k^{-1}\psi(k\,l,\bar x)=l$,
thus proving \eqref{e:Atri1}.
Setting $\cF'=\cup_i\de T_i$, from \eqref{e:psi} we deduce that
$\frac{\de\ph}{\de x_1}(x)=\frac{\de\psi}{\de x_1}(k\,x_1,\bar x)$ satisfy
\begin{equation}\label{e:c1}
0\leq \frac{\de\ph}{\de x_1}\leq\frac{1}{1-C\,\gamma}\quad\forall \; x\in\R^n
\quad\text{and}\quad
\frac{\de\ph}{\de x_1}(x)=0\quad\text{for}\;
x\in\cF'_{\gamma/k}=\tau_k(\cF_\gamma).
\end{equation}
Moreover, using the periodicity of $\nabla \psi$,
\begin{equation}\label{e:c2}
\left|\frac{\de\ph}{\de x_j}\right|\leq k^{-1}\|\nabla\psi\|_{C^0}, \quad
\forall\;j\neq 1.
\end{equation}
Given now $\theta>0$, we can choose $\gamma$, $k$ and $\delta$ in the following way:
\[
\gamma\leq \frac{\theta}{C}, \quad k\geq \frac{\|\nabla\psi\|_{C^0}}{\theta}
\quad\text{and}\quad \delta\leq {\frac{\gamma}{k}},
\]
so that, from \eqref{e:c1} and \eqref{e:c2}, the lemma follows.
\end{proof}

\begin{remark}\label{r:rescaling}
We note here that, given $\mathcal{T}$ and $\ph$ as in Proposition~\ref{p:calib},
for every $k\in \N$, the following functions and triangulations,
\[
\ph_k(x):=k^{-1}\ph(kx)\quad\text{and}\quad
\mathcal{T}^k=\{T^k_i\}_{i\in\N},
\]
where $T^k_i=k^{-1}T_i$, satisfy the same conclusions as in Proposition~\ref{p:calib}
with $\delta_k=\delta/k$:
\begin{gather}
\ph_k(l,\bar x)=l\quad\forall\;\bar x\in\R^{n-1},\;
\forall\;l\in\Z;\label{e:tri3}\\
\left|\frac{\de \ph_k}{\de x_j}\right|\leq\theta\quad\forall\;x\in\R^n,\;j\neq
1;\label{e:tri2}\\
0\leq \frac{\de \ph_k}{\de x_1}(x)\leq\frac{1}{1-\theta}\quad\forall\;x\in\R^n
\quad\text{and}\quad
\frac{\de \ph_k}{\de x_1}(x)=0\quad\forall\;x\in
\mathcal{F}_{\delta_k}.\label{e:tri1}
\end{gather}
\end{remark}

\bigskip

\subsection{Proof of Proposition~\ref{p:density}}

In light of the Kirszbraun extension theorem, it suffices to show that, given a short map $f:\R^n\to\R^n$ and $\eta>0$,
there exists $h\in F_\eps(x,y,K)$ such that $\|f-h\|_{C^0(K)}\leq \eta$.

\medskip

There is no loss of generality in assuming that $x=0$, $y=e_1$
and $K\subset B_R$ for some $R>0$.
We construct $h$ as the result of successive approximations.

\noindent{\bf Step 1: Mollification.} We consider first the map $f_1=(1-2\,\theta)\,\rho_{\theta}* f$, where
$\theta>0$ is a real number to be fixed later.
Clearly,
\begin{equation*}
f_1\in C^\infty(\R^n),\quad \Lip(f_1)\leq 1-2\,\theta,
\end{equation*}
and
\begin{align}\label{e:app1-2}
\| f-f_1\|_{C^0(B_{2R})}&\leq
\| f-\rho_{\theta}* f\|_{C^0(\R^n)}+
2\,\theta\,\|\rho_{\theta}* f\|_{C^0(B_{2R})}
\notag\\
&\leq
\theta\,\left(1+2\,\| f\|_{C^0(B_{2R+\theta})}\right).
\end{align}

\medskip

\noindent{\bf Step 2: Piecewise affine approximation.} 
Next, we approximate $f_1$ uniformly by a piecewise affine map $f_2$.
To this aim, consider the periodic triangulation $\mathcal{T}$ given by Proposition~\ref{p:calib}.
Note that, by periodicity, there exist $\sigma, r>0$ such that, for every $T_i\in\mathcal{T}$,
\begin{equation*}
B_r(\bar x_i)\subset T_i\subset B_{\sigma r}(\bar x_i),\quad\text{with $\bar x_i$ barycenter of }\; T_i.
\end{equation*}
Choose $k\in\N$ such that
\begin{equation}\label{e:k}
\frac{4\,r\,\sigma^2\,\|\nabla^2 f_1\|_{C^0(B_{2R})}}{k}<\theta
\quad\text{and}\quad
\frac{4\,r\,\sigma}{k}\leq \theta,
\end{equation}
and consider $f_2$ the piecewise affine approximation of $f_1$ subordinated to
the rescaled triangulation $\mathcal{T}^k$ in Remark~\ref{r:rescaling}.
From Lemma~\ref{l:lip}, it follows that 
$f_2\vert_{T^k_i}$ is short
for every $T^k_i\subset B_{2R}$ because
\begin{equation*}
\Lip(f_2\vert_{T^k_i})\leq \Lip(f_1)+\Lip((f_2-f_1)\vert_{T^k_i})
\leq 1-2\,\theta+\frac{4\,r\,\sigma^2\,\|\nabla^2 f_1\|_{C^0(B_{2R})}}{k}
\stackrel{\eqref{e:k}}{\leq} 1-\theta.
\end{equation*}
Moreover, always for $T^k_i\subset B_{2R}$,
\begin{equation}\label{e:app2-2}
\|f_2-f_1\|_{C^0(T^k_i)}\leq \big(\Lip(f_2)+\Lip (f_1)\big)\,\diam(T^k_i)\leq
\frac{4\,r\,\sigma}{k} \stackrel{\eqref{e:k}}{\leq} \theta.
\end{equation}

\medskip

\noindent{\bf Step 3: Laminations.} 
Finally, in every $T^k_i\subset B_{2R}$ we replace $f_2$ by the single lamination
construction in Proposition~\ref{p:lamination}.
Since the boundary data for each simplex is the same of $f_2$,
gluing all the constructions together, we obtain a short map $f_3$
defined on the union of the $T^k_i\subset B_{2R}$.
Moreover, we take $\theta$ small enough in order to assure that
the boundary of the rescaled simplices $T^k_{i\theta}$ by a factor $(1- \theta)$ and 
with the same barycenter of $T^k_i$ belongs to the
$\delta_k$-neighborhood of the faces of $\mathcal{T}^k$, i.e.~(notation as in Remark~\ref{r:rescaling})
\[
\de T^k_{i\theta}\subset \mathcal{F}_{\delta_k}.
\]
With this assumption, by Proposition~\ref{p:lamination},
the function $f_3$ satisfies:
\begin{equation}\label{e:app3}
\|f_3-f_2\|_{C^0(B_{R})}\leq\theta,\quad\Lip(f_3)\leq 1-\frac{\theta}{2}
\quad\text{and}\quad
(1-\theta)\int_0^1 |\dot\gamma_1(t)|dt \leq \ell(f_3\circ \gamma),
\end{equation}
for every rectifiable $\gamma=(\gamma_1,\ldots,\gamma_n):[0,1]\to \R^n\setminus
\mathcal{F}_{\delta_k}$.

We set $h:=f_3\vert_{K}$.
Clearly, from \eqref{e:app1-2}, \eqref{e:app2-2} and \eqref{e:app3},
it follows that 
\begin{equation}\label{e:uniform close}
\|\bar h-\bar f\|_{C^0(K)}\leq \theta (3+2\,\|\bar f\|_{C^0(B_{2R+\theta})}).
\end{equation}
So, up to choosing $\theta$ suitably small,
we need only to show that $h\in F_\eps(x,y,K)$.
Let $\gamma\in\Gamma_K(x,y)$.
We start noticing that there exist finitely many pairwise disjoint open intervals
$I_l, J_m\subset[0,1]$ such that
\[
\gamma(I_l)\subset \R^n\setminus \mathcal{F}_{\delta_k}
\quad\text{and}\quad
\gamma(J_m)\subset \mathcal{F}_{\delta_k},
\]
and
\begin{equation}\label{e:error}
\sum_l\ell(\gamma\vert_{I_l})+\sum_m\ell(\gamma\vert_{J_m})\geq \ell(\gamma)-\theta.
\end{equation}
Therefore, we can estimate the length of $h\circ \gamma$ as follows:
letting $\ph_k$ be the function in Remark~\ref{r:rescaling},
\begin{align}\label{e:l(h gamma)}
\ell\big(h\circ \gamma\big)&\geq\sum_{l}\ell\big(h\circ \gamma\vert_{I_l}\big)
\stackrel{\eqref{e:app3}}{\geq} (1-\theta)\sum_l\int_{I_l}|\gamma_1'|
\stackrel{\eqref{e:tri1}}{\geq} (1-\theta)^2\sum_l\int_{I_l}\left|\frac{\de\ph_k}{\de x_1}(\gamma)\,\gamma_1'\right|\notag\\
&\stackrel{\mathclap{\eqref{e:tri1}}}{=}
(1-\theta)^2\sum_l\int_{I_l}\left|\frac{\de\ph_k}{\de x_1}(\gamma)\,\gamma_1'\right|
+(1-\theta)^2\sum_m\int_{J_m}\left|\frac{\de\ph_k}{\de x_1}(\gamma)\,\gamma_1'\right|\notag\\
&\stackrel{\mathclap{{\eqref{e:tri1}+}\eqref{e:error}}}{\geq}\qquad
(1-\theta)^2\int_0^1\left|\frac{\de\ph_k}{\de x_1}(\gamma)\,\gamma_1'\right|-\theta\,(1-\theta)\notag\\
&\geq (1-\theta)^2{\int_0^1 \left[ |(\ph_k\circ \gamma)'| -\sum_{j=2}^n\left|\frac{\de\ph_k}{\de x_j}(\gamma)\,\gamma_j'\right|\right]}-\theta\,(1-\theta)\notag\\
&\stackrel{\mathclap{\eqref{e:tri2}}}{\geq} (1-\theta)^2\big(\ph_k(\gamma(1))-\ph_k(\gamma(0))\big)
-{(n-1)}\,\ell(\gamma)\,\theta\,(1-\theta)^2-\theta\,(1-\theta)\notag\\
&=(1-\theta)^2-\theta\,(1-\theta)\big[{(n-1)}\,\ell(\gamma)\,(1-\theta)+1\big].
\end{align}
Therefore, from \eqref{e:l(h gamma)} we deduce that
there exists $\theta=\theta(\eps)>0$ such that
$\ell(h\circ \gamma)\geq(1-\eps)$ if $\ell(\gamma)\leq \eps^{-1}$.
Since the condition defining $F_\eps(x,y,K)$ is always satisfied if
$\ell(\gamma)> \eps^{-1}$, this implies that $h\in F_\eps(x,y,K)$ and finishes
the proof.


\section{Typical extensions}\label{s.ext}
In this section we prove Theorem~\ref{t.extension} which we restate for convenience.

\begin{theorem}\label{t.extension2}
Let $f:K\to\R^n$ be a short map, with $K\subset\R^n$ compact.
Set
\[
X_{f} := \big\{F\in \cX(\R^n,\R^n) : F\vert_{C(f,K)} = \bar{f}\big\},
\]
where $\bar{f}$ denotes the unique short extension of $f$ to $C(f,K)$.
Then 
$$
X_f \cap \cI(\R^n\setminus C(f,K))\textrm{ is residual in $X_f$.}
$$
\end{theorem}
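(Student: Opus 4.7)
The plan is to treat $(X_f, D)$ as a complete metric space and to exhibit $X_f\cap\cI(\R^n\setminus C(f,K))$ as a countable intersection of open dense subsets of $X_f$. That $X_f$ is closed in $(\cX(\R^n,\R^n),D)$ is immediate, since the defining equality $F|_{C(f,K)}=\bar f$ is preserved by uniform convergence. The open set $\R^n\setminus C(f,K)$ decomposes into countably many path components $\{U_i\}_{i\in\N}$, and by Lemma \ref{l.local} it suffices to arrange $F|_{U_i}\in\cI(U_i)$ for every $i$. Exhausting $U_i$ by path-connected compact sets $K_{i,j}\subset\mathrm{int}(K_{i,j+1})$ with $\bigcup_j K_{i,j}=U_i$ and noting that any rectifiable curve in $U_i$ has compact image, one has $\cI(U_i)=\bigcap_j \cI(K_{i,j})$. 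Combining with Lemmas \ref{l.iso}, \ref{l.countable} (applied to a countable dense set $S_{i,j}\subset K_{i,j}$) and Lemma \ref{l.closed} yields
\[
X_f\cap\cI(\R^n\setminus C(f,K))\;\supset\!\!\!\bigcap_{\substack{i,\,j\\ x,y\in S_{i,j}\\ \eps\in\Q\cap(0,1)}}\!\!\!\Bigl(X_f\cap F_\eps(x,y,K_{i,j})\Bigr),
\]
each factor being open in $X_f$. By Baire, the proof reduces to the density of every $X_f\cap F_\eps(x,y,K_{i,j})$ in $X_f$.

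For the density step, fix $F\in X_f$ and $\delta>0$. Two ingredients drive the construction. Firstly, Lemma \ref{l.C} gives $C(\bar f, C(f,K))=C(f,K)$, so by Proposition \ref{p.lsse condition} there exists an LSSE extension $h^*\in X_f$ of $\bar f$ satisfying $\Lip(h^*|_A)<1$ on every compact $A\subset\subset\R^n\setminus C(f,K)$ and, by continuity, $|h^*(a)-\bar f(b)|\leq|a-b|-\beta(A)$ uniformly for $(a,b)\in A\times C(f,K)$. Replacing $F$ by $F':=(1-t)F+t\,h^*\in X_f$ with $t>0$ small yields a short map that is strictly short on each compact subset of $\R^n\setminus C(f,K)$. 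Secondly, choose a ball $B_R$ containing $C(f,K)\cup K_{i,j}$ in its interior and apply Proposition \ref{p:density} to the compact set $K':=C(f,K)\cup K_{i,j}\cup\partial B_R$ with input $F'|_{K'}$: this produces a short map $h:\R^n\to\R^n$ with $\|h-F'\|_{C^0(K')}\leq\eta$, with $\Lip(h)\leq 1-\theta/2$ on a neighbourhood of $K'$, and with $h\in F_\eps(x,y,K')$. Since $K_{i,j}$, $C(f,K)$ and $\partial B_R$ are pairwise disjoint compact sets, the path component of $K'$ containing $x,y\in K_{i,j}$ is $K_{i,j}$ itself, so in fact $h\in F_\eps(x,y,K_{i,j})$.

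The candidate $G$ is built by patching: declare $G=\bar f$ on $C(f,K)$, $G=h$ on $K_{i,j}$, $G=F$ on $\R^n\setminus B_R$, and extend to all of $\R^n$ as a $1$-Lipschitz map. For $\eta$ small relative to $\theta$, $\dist(K_{i,j},C(f,K))$ and $\dist(K_{i,j},\partial B_R)$, the prescribed data is $1$-Lipschitz thanks to the strict shortness of $h$: for $a\in K_{i,j}$ and $b\in C(f,K)$,
\[
|h(a)-\bar f(b)|\leq|h(a)-F'(a)|+\Lip(h|_{K'})\,|a-b|\leq\eta+\bigl(1-\tfrac{\theta}{2}\bigr)|a-b|\leq|a-b|,
\]
and an analogous estimate through an intermediate point on $\partial B_R$ controls the pairs $(a,b)\in K_{i,j}\times(\R^n\setminus B_R)$; the remaining cases are immediate. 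With the patching done, $G\in X_f$ (since $G=\bar f$ on $C(f,K)$) and $G\in F_\eps(x,y,K_{i,j})$ (since $G|_{K_{i,j}}=h|_{K_{i,j}}$).

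The hardest point will be enforcing the uniform $D$-closeness $\sup_{x\in\R^n}|G(x)-F(x)|\leq\delta$: on the shell $B_R\setminus(C(f,K)\cup K_{i,j})$ an abstract Kirszbraun extension is only controlled by $2\,\dist(x,\,\partial B_R\cup C(f,K)\cup K_{i,j})+\eta$, which can be far larger than $\delta$. To get around this I would replace the anonymous Kirszbraun extension on this shell by a controlled one, built as an explicit convex combination of $F$ and the auxiliary LSSE extension $h^*$ (both of which coincide with $\bar f$ on $C(f,K)$), tuned by cut-offs so as to interpolate between $h$ near $K_{i,j}$ and $F$ near $\partial B_R$. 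Making such an interpolation simultaneously $1$-Lipschitz and uniformly $\delta$-close to $F$ is exactly where the strict-short estimates from Proposition \ref{p.lsse condition} become essential, and it is the most delicate quantitative part of the argument.
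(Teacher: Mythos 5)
Your proposal follows the same overall architecture as the paper's proof---a Baire-category reduction via Lemmas~\ref{l.iso}, \ref{l.countable}, \ref{l.closed}, \ref{l.local}, then a density step that combines the LSSE extension from Proposition~\ref{p.lsse condition} with Proposition~\ref{p:density}---but the density step has two genuine gaps, the second of which you correctly flag yourself.

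First, the convex combination $F' := (1-t)F + t\,h^*$ taken globally is in general \emph{not} $D$-close to $F$. Indeed $D(F',F)=\sup_{x}\min\{1,\,t\,|F(x)-h^*(x)|\}$, and since $F-h^*$ is typically unbounded on $\R^n$, this equals $1$ for \emph{every} $t>0$. The paper forms the convex combination only on the bounded set $B_{2R}$, where $\|F\|_{C^0}$ and $\|h\|_{C^0}$ are finite (Step~1 of Section~\ref{s.ext}), and then deals with the far field separately in Step~2 using the explicit radial contraction $F'(x)=F\bigl(x(1-\tfrac{\tau}{1+|x|})\bigr)$, which is simultaneously uniformly $\tau$-close to $F$ and locally strictly short.

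Second, the anonymous Kirszbraun fill on the potentially thick shell between $K_{i,j}\cup C(f,K)$ and $\partial B_R$ does not give uniform closeness to $F$, as you note. Your proposed repair---a cut-off interpolation between $h$ and $F$---cannot work as stated: a map $\psi\,h+(1-\psi)F$ has derivative containing a term $\nabla\psi\otimes(h-F)$, which is uncontrolled unless $h$ is already uniformly close to $F$ on the entire shell, and Proposition~\ref{p:density} only gives closeness on the compact set to which it is applied. The paper avoids this by a different \emph{ordering}: it first builds a map $F_2\in X_f$ that is globally close to $F$ ($\leq 2\tau+\eta_1$, see \eqref{e:step3 bis}), agrees with $\bar f$ on $C(f,K)$, and is $(1-\theta)$-Lipschitz on the whole ball $B_{3R}$; only \emph{then} does it invoke Proposition~\ref{p:density}, applied to $F_2$, and glues the resulting $F^{iv}$ to $F_2$ on a shell $B'\setminus B$ of width only $\eta_2$. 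Because $F_2$ is already globally defined and close to $F$, the Kirszbraun step there can move points by at most $2\eta_2$ (the estimate leading to \eqref{e:step3}). Constructing the globally close, strictly short $F_2$ \emph{before} applying the density proposition, and restricting Kirszbraun to thin shells whose width is a free parameter, is the key mechanism your proposal is missing.
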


\begin{proof}
Let $\{B_i\}_{i\in\N}$ be a countable family of closed balls $B_i\subset\R^n\setminus C(f,K)$ whose interiors cover $\R^n \setminus C(f,K)$. By Lemmas~\ref{l.iso}, \ref{l.countable} and \ref{l.local}, we have that
\begin{equation*}
\mathcal{I}(\R^n\setminus C(f,K))\cap X_f \supset \bigcap_{k\in\N}\bigcap_{i\in\N} \bigcap_{x,y\in B_i\cap\Q^n}  F_{1/k}(x,y,B_i) \cap X_f.
\end{equation*}
Therefore, in view of Lemma~\ref{l.closed}, it is enough to prove that $X_f \cap F_{1/k}(x,y,B_i)$ is dense in $X_f$.
For simplicity of notation we drop the subscript $i$, $B_i=B$ and 
show that, for every $F\in X_f$, $\eta>0$ and $\eps>0$, there exists a map $G \in X_f \cap F_{\eps}(x,y,B)$ such that
\begin{equation}\label{e.uniform}
\|F-G\|_{C^0(\R^n)}\leq \eta.
\end{equation}
We divide the proof in several steps.

\medskip

\noindent{\bf Step 1: local strictly short approximation.}
By Lemma~\ref{l.C} and Proposition~\ref{p.lsse condition} we can fix a locally strictly short extension $h:\R^n\to\R^n$ of $\bar f$. Let $R>0$ be such that $C(f,K) \cup B \subset B_R$ and $\eta_1>0$ to be fixed later.

If $F\vert_{B_{2R}}\equiv 0$, set $F_1:=F$. Otherwise, assuming
that $F\vert_{B_{2R}}\not\equiv 0$, fix $t>0$ arbitrary such that
\[
t<\frac{\eta_1}{\|h\|_{C^0(B_{2R})}+\|F\|_{C^0(B_{2R})}},
\]
and define the function $F_1:B_{2R}\to \R^n$ given by $F_1=(1-t)F+t\,h$: clearly in either case
\begin{gather}
F_1\vert_{C(f,K)}=\bar{f}, \label{e:step1_0}\\
\|F-F_1\|_{C^0(B_{2R})}\leq \eta_1,\label{e:step1_1}\\
\Lip(F_1\vert_{B})\leq (1-t)\,\Lip(F)+t\,\Lip(h\vert_{B})\leq 1-\alpha,\label{e:step1_2}
\end{gather}
for some  $0<\alpha<1$, because $h$ is strictly short in $B$.

\medskip

\noindent{\bf Step 2: global extension.} Next we extend $F_1$ to the entire $\R^n$ keeping close to $F$. To this aim, consider the function $F':\R^n \setminus B_{2R} \to \R^n$ given by
\[
F'(x) := F \left(x\left(1-\frac{\tau}{1+|x|}\right)\right),
\]
for some $\tau>0$ to be fixed momentarily. It is simple to verify that
\begin{equation}\label{e.uniform'}
\|F-F'\|_{C^0(\R^n\setminus B_{2R})}\leq \tau.
\end{equation}
Moreover, $F'$ is locally strictly short: indeed,
\begin{align*}
|F'(x) - F'(y)| & \leq  \left|x\left(1-\frac{\tau}{1+|x|}\right) - y\left(1-\frac{\tau}{1+|y|}\right)\right|\allowdisplaybreaks\\
& = \left|(x-y)\left(1-\frac{\tau\,\big(1+|x|\big)}{\big(1+|x|\big)\big(1+|y|\big)}\right) + \frac{\tau\,x\,\big(|x|-|y|\big)}{\big(1+|x|\big)\big(1+|y|\big)}\right|\allowdisplaybreaks\\
&\leq |x-y|\left(1-\frac{\tau\,\big(1+|x|\big)}{\big(1+|x|\big)\big(1+|y|\big)}\right) + |x-y|\,\frac{\tau\,|x|}{\big(1+|x|\big)\big(1+|y|\big)}\allowdisplaybreaks\\
& =  |x-y|\left(1-\frac{\tau}{\big(1+|x|\big)\big(1+|y|\big)}\right) < |x-y|.
\end{align*}

Next, consider the map given by
\begin{equation*}
F'':=
\begin{cases}
F_1 & \text{in }\, B_{2R-\frac{2 \tau R}{1+2R}},\\
F' &\text{in }\,\R^n\setminus B_{2R}.
\end{cases}
\end{equation*}
We claim that $F''$ is locally strictly short outside $C(f,K)$. Since $F_1$ and $F'$ are locally strictly short, it is enough to consider $z\in \de B_{2R-\frac{2\tau\,R}{1+2R}}$ and $w\in \de B_{2R}$ and estimate $|F''(z)-F''(w)|$.
To this aim, we set $\tilde w := \frac{w}{|w|} (2\,R-\frac{2\,\tau\,R}{1+2R})$ and note that there exists $\beta(\tau,R)>0$ such that
\begin{equation}\label{e.quot}
\frac{|z-\tilde w|}{|z-w|} \leq 1 - \beta \quad \forall\; z\in \de B_{2R-\frac{2\tau\,R}{1+2R}},\;\forall\; w\in \de B_{2R}.
\end{equation}
Indeed, for every fixed $w \in \de B_{2R}$, one can consider the function $\Phi(z) := \frac{|z-\tilde w|}{|z-w|}$ and notice that $\Phi$ is continuous on $\de B_{2R-\frac{2\tau\,R}{1+2R}}$ and $\Phi(z) <1$ for every $z$. Therefore, by compactness of the sphere, $\Phi$ has a maximum which is strictly less then $1$ and is independent of $w$ because of rotational invariance.
We can, hence, estimate as follows:
\begin{align*}
|F''(z)-F''(w)|&\leq |F_1(z)-F_1(\tilde w)|+|F_1(\tilde w)-F'(w)|\allowdisplaybreaks\\
& = |F_1(z)-F_1(\tilde w)|+|F_1(\tilde w)-F(\tilde w)|\allowdisplaybreaks\\
&\stackrel{\eqref{e:step1_1}}{\leq} |z-\tilde w|+\eta_1 \allowdisplaybreaks\\
&\stackrel{\eqref{e.quot}}{\leq}
\left(1- \frac{\beta}{2}\right) |z-w|,
\end{align*}
provided $\eta_1 \leq \frac{\beta\,\tau\,R}{1+2R}$.
In particular, this implies that there exists $\theta>0$ such that
\[
\Lip\left(F''\vert_{{ (}B_{3R}\setminus B_{2R} { )}\cup B_{2R - \frac{2\tau R}{1+2R}}}\right) \leq 1 - \theta.
\]
Using the Kirszbraun extension theorem, we can hence extend $F''$ to a strictly short map $F'''$ on $B_{3R}$, and finally set
\[
F_2 :=
\begin{cases}
F''' & \text{in } \, B_{3R},\\
F' & \text{in } \, \R^n \setminus B_{3R}.
\end{cases}
\]
Observe that, by construction, 
\begin{equation}\label{e:F_2}
\Lip\left(F_2\vert_{B_{3R}}\right)\leq 1-\theta.
\end{equation}
Moreover, for every $z\in B_{2R}\setminus B_{2 R-\frac{2 \tau R}{1+2R}}$, setting $\tilde z := \frac{z}{|z|} (2\,R-\frac{2\,\tau\,R}{1+2R})$, we have
\[
|F_2(z)-F_1(z)|\leq |F_2(z)-F_2(\tilde z)|+|F_1(\tilde z)-F_1(z)|\leq 2\,|z-\tilde z|\leq \frac{4\,\tau\,R}{1+2\,R}< 2\, \tau.
\]
It follows, then, that
\begin{align}\label{e:step3 bis}
\|F_2-F\|_{C^{0}(\R^n)} &= \max\big\{2\,\tau +\|F_1-F\|_{C^{0}(B_{2R})},\|F'-F\|_{C^{0}(\R^n\setminus B_{2R})}\big\}\notag\\
& \leq 2\,\tau + \eta_1.
\end{align}

\medskip

\noindent{\bf Step 3: almost isometric approximation.}
Using Proposition~\ref{p:density}, we find $F^{iv}\in \cX(B_{2R},\R^n) \cap F_{\eps}(x,y,B)$ such that
\begin{equation}\label{e:step2}
\|F^{iv}-F_2\|_{C^0(B_{2R})}\leq \theta\,\eta_2,
\end{equation}
for some $\eta_2>0$ to be fixed soon.
For now we merely assume that  $\eta_2$ satisfies the following: setting $B= B_{r}(x)$, we require $B'=B_{r+\eta_2}(x)\subset {B_{2R}}\setminus C(f,K)$.

Next, we verify that the map
\begin{equation*}
F^{v} :=
\begin{cases}
F^{iv} & \text{in }\, B,\\
F_2 &\text{in }\,B_{2R}\setminus B',
\end{cases}
\end{equation*}
is Lipschitz continuous with $\Lip(F^v)\leq 1$.
Indeed, arguing as before, it is enough to consider the case of $z\in B$ and
$w\in B_{2R}\setminus  B'$ and estimate as follows:
\begin{align*}
|F^v(z)-F^v(w)|&\leq |F_2(z)-F_2(w)|+|F_2({z})-F^{iv}({z})|\\
& \stackrel{\eqref{e:F_2}+\eqref{e:step2}}{\leq} (1-\theta)\,|w-z|+\theta\,\eta_2\\
&\leq (1-\theta)\,|w-z|+\theta\,|w-z|\leq |w-z|.
\end{align*}
Using Kirszbraun's Theorem, we extend $F^v$ to a short map $F_3$ on the whole $\R^n$.
As before, for every $z\in B'\setminus B$, taking $w\in \partial B'$ with $|w-z|\leq \eta_2$, we get
\[
|F_3(z)-F_2(z)|\leq |F_3(z)-F_3(w)|+|F_2(w)-F_2(z)|\leq 2\,|w-z|\leq 2\,\eta_2.
\]
It follows, then, from \eqref{e:step2} that
\begin{equation}\label{e:step3}
\|F_2-F_3\|_{C^{0}(\R^n)} = \max\big\{\|F^{iv}-F_2\|_{C^{0}(B)}, 2\, \eta_2 \big\} \leq 2\,\eta_2.
\end{equation}

\medskip

We can now conclude that the function $G:=F_3$ is an approximation for our initial function $F$.
Indeed, $G\in X_f$ since by \eqref{e:step1_0} $G\vert_{C(f,K)}=F_1\vert_{C(f,K)}=\bar{f}$ and $\Lip(G) \leq 1$.
Moreover, $G\in F_{\eps}(x,y,B)$ because $G\vert_{B}=F_3\vert_{B}$ and $F_3 \in \Lip(B_{2R},\R^n) \cap F_{\eps}(x,y,B)$. Finally, putting together \eqref{e:step3 bis} and \eqref{e:step3}, we conclude \eqref{e.uniform} by choosing suitably $\tau, \eta_1$ and $\eta_2$ in this order.
\end{proof}

For later use we state the following immediate corollary of Theorem~\ref{t.extension}. 

\begin{corollary}\label{c:dir}
Let $\Omega\subset\R^n$ be an open and bounded set, and let $h:\Omega\to\R^n$ be a
given Lipschitz map with $\Lip(h)\leq L$ for some $L>0$.
Then, for every $\eta>0$ and $M>L$, there exists a map $g:\Omega\to\R^n$ such
that $g\vert_{\de\Omega}=h$, $\|g-h\|_{C^0(\Omega)}\leq \eta$ and
every rectifiable curve $\gamma:[0,1]\to\Omega$ satisfies
$\ell(g\circ \gamma)= M\,\ell(\gamma)$.
\end{corollary}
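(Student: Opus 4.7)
The plan is to reduce Corollary~\ref{c:dir} to Theorem~\ref{t.extension} by a simple rescaling. The key observation is that an $M$-Lipschitz map on $\Omega$ which multiplies every curve length by $M$ becomes, after division by $M$, an isometry in the sense of \eqref{e:isometry}; and since $\Lip(h/M) \leq L/M < 1$, the boundary trace of $h/M$ is strictly short, which is precisely the situation in which $C(f,K) = K$ and Theorem~\ref{t.extension} applies over the whole of $\Omega$.

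Concretely, I would set $K := \partial\Omega$ (compact by boundedness of $\Omega$) and $f := h|_K/M : K \to \R^n$. Strict shortness $\Lip(f) \leq L/M < 1$ forces $|f(x) - f(y)| < |x-y|$ for all distinct $x,y \in K$, so the family $\cS$ in the definition of $C(f,K)$ consists only of singletons; hence $C(f,K) = K$ and $\Omega \subset \R^n \setminus C(f,K)$. Next, I would extend $h/M$ (which extends continuously to $\overline\Omega$ with the same Lipschitz constant) to a $1$-Lipschitz map $H : \R^n \to \R^n$ via Kirszbraun's theorem. By construction $H \in X_f$ and $H|_\Omega = h/M$. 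By Theorem~\ref{t.extension}, $X_f \cap \cI(\R^n \setminus C(f,K))$ is residual and therefore dense in $X_f$, so there is $\tilde g$ in this set with $\|\tilde g - H\|_{C^0(\Omega)} \leq \eta/M$ (density in the truncated metric $D$ transfers to genuine $C^0$-closeness on the bounded set $\Omega$ once the tolerance is less than $1$).

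Setting $g := M\,\tilde g|_\Omega$, the three required properties follow routinely. The boundary condition $g = h$ on $\partial\Omega$ holds since $\tilde g|_K = f$; the uniform estimate $\|g - h\|_{C^0(\Omega)} = M\|\tilde g - H\|_{C^0(\Omega)} \leq \eta$ uses $H|_\Omega = h/M$; and for every rectifiable $\gamma : [0,1] \to \Omega \subset \R^n \setminus \partial\Omega$, the isometric property $\tilde g \in \cI(\R^n\setminus K)$ gives $\ell(\tilde g \circ \gamma) = \ell_g(\gamma)$, whence $\ell(g \circ \gamma) = M\,\ell(\gamma)$. There is no genuine obstacle in this plan: the only substantive step is the identification $C(f,K) = K$, which is an immediate consequence of the strict shortness $M > L$; everything else is Kirszbraun extension, density transfer, and rescaling by $M$.
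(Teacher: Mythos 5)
Your argument is correct and is exactly the paper's approach: the paper's own (one-line) proof applies Theorem~\ref{t.extension} to $K=\partial\Omega$ and $f=h|_{\partial\Omega}/M$, observing that $\Lip(h)\leq L<M$ forces $C(f,K)=K$. You have simply unpacked the details (the singleton observation for $\cS$, the Kirszbraun extension to put $H$ in $X_f$, residual $\Rightarrow$ dense, and the passage from the truncated metric $D$ to $C^0$-closeness), but every step matches what the paper intends.
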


\begin{proof}
The proof follows easily applying Theorem~\ref{t.extension} to $K:=\de \Omega$ and  $f=g/M$ (note that from the condition $\Lip(h) \leq L <M$ it follows that $C(f, K)=K$).
\end{proof}


\section{Generic restrictions}\label{s:restriction}
In this section we prove Theorem~\ref{t.restriction}. We start with the following proposition on the genericity of LSSE maps.

\begin{proposition}\label{p.LSSE}
Let $K\subset R^n$ be a compact set. Then, the typical short map in $\cX(K,\R^n)$ admits an extension to the whole $\R^n$, which is locally strictly short on $\R^n \setminus K$.
\end{proposition}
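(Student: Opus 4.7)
The plan is to exploit the characterization from Proposition~\ref{p.lsse condition}, in particular property (d): a short map $f$ is LSSE precisely when $|f(x)-f(y)|<|x-y|$ for every pair $x,y\in K$ with $[x,y]\not\subset K$. The task therefore reduces to showing that the set of $f\in\cX(K,\R^n)$ satisfying this implication is residual in the complete metric space $(\cX(K,\R^n),D)$, so I would set up a standard Baire category argument.

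First, I would quantify the condition $[x,y]\not\subset K$ by introducing, for each $\delta>0$,
\[
A_\delta:=\big\{(x,y)\in K\times K:\,\exists\,z\in[x,y]\text{ with }\dist(z,K)\geq\delta\big\}.
\]
Any such $z$ satisfies $|x-y|=|x-z|+|z-y|\geq 2\delta$, so $A_\delta$ is bounded away from the diagonal, and the continuity of $\dist(\cdot,K)$ yields that it is compact in $K\times K$. I would then set
\[
V_\delta:=\big\{f\in\cX(K,\R^n):\,|f(x)-f(y)|<|x-y|\text{ for every }(x,y)\in A_\delta\big\}.
\]
Every pair $x\neq y$ in $K$ with $[x,y]\not\subset K$ lies in some $A_{1/k}$, so by Proposition~\ref{p.lsse condition} the LSSE maps coincide with $\bigcap_{k\in\N} V_{1/k}$.

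It remains to verify that each $V_\delta$ is open and dense in $\cX(K,\R^n)$. For openness, the compactness of $A_\delta$ is decisive: given $f\in V_\delta$, the continuous positive function $(x,y)\mapsto|x-y|-|f(x)-f(y)|$ admits a positive minimum $\eta>0$ on $A_\delta$, and any short $g$ with $\|g-f\|_{C^0(K)}<\eta/3$ then satisfies $|g(x)-g(y)|<|x-y|$ on $A_\delta$. For density, I would use the rescaled maps $g:=(1-s)f$ with $s\in(0,1)$ small: these have $\Lip(g)\leq 1-s$, hence are strictly short and therefore lie in every $V_\delta$, while $\|g-f\|_{C^0(K)}=s\,\|f\|_{C^0(K)}$ can be made arbitrarily small because $f$ is bounded on the compact set $K$. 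The Baire category theorem in $(\cX(K,\R^n),D)$ then concludes the proof.

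No step here poses a genuine obstacle; the only structural point worth flagging is the compactness of $A_\delta$ (in particular, its separation from the diagonal), which is what upgrades the strict inequality in the definition of $V_\delta$ to a uniform gap and hence to an open condition on $f$. Density is essentially free, since strict shortness is a much stronger property than (d) and is trivially obtained by the rescaling $f\mapsto(1-s)f$.
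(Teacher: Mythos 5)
Your argument is correct, and it takes a genuinely different route from the paper's. The paper defines the open dense sets $\cG_\eps$ by an exterior compatibility condition: $f\in\cG_\eps$ if there exist $L<1$ and a map $h:\R^n\setminus K_\eps\to\R^n$ with $\Lip(h)\leq L$ and $|h(z)-f(y)|\leq L\,|z-y|$ for all $z\notin K_\eps$, $y\in K$. Openness there comes from the uniform gap $|z-y|\geq\eps$ separating $K$ from $\R^n\setminus K_\eps$, and for $f\in\bigcap_{\eps}\cG_\eps$ the LSSE extension is assembled explicitly as a convex series $\sum_k 2^{-k}H_k$ of Kirszbraun extensions, essentially re-running the construction behind the implication (b)$\Rightarrow$(a) of Proposition~\ref{p.lsse condition}. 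You instead work entirely with pairs of points inside $K$, using the already-established equivalence (a)$\Leftrightarrow$(d) to recast the LSSE property as the intrinsic condition $|f(x)-f(y)|<|x-y|$ whenever $[x,y]\not\subset K$, and your compact sets $A_\delta$ quantify how far the segment leaves $K$ so that the strict inequality becomes a uniform gap on each $A_\delta$; openness of $V_\delta$ is then the standard compactness argument. Density is the same in both proofs (rescaling by $1-s$ gives strictly short maps, which belong to every member of the respective family). The trade-off is that your version is shorter and cleaner because it reuses Proposition~\ref{p.lsse condition} rather than repeating the extension construction, whereas the paper's version is self-contained within the proof and never needs to invoke criterion (d).
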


\begin{proof}
We construct a residual set of LSSE maps in $\cX(K,\R^n)$. For every $\eps>0$, let
$K_\eps$ denote the open $\eps$-neighborhood of $K$.
Let moreover $\cG_{\eps} \subset \cX(K,\R^n)$ be the set of short maps $f:K\to \R^n$ with this property: there exists $L<1$ and there exists $h:\R^n \setminus K_\eps\to\R^n$ such that $\Lip(h) \leq L$ and 
\begin{equation}\label{e.distance}
|h(z)-f(y)|\leq L\,|z-y|\quad \forall\; z\in \R^n \setminus K_\eps, \,\forall \; y\in K.
\end{equation}
Note that $\cG_{\eps}$ is open in $\cX(K,\R^n)$: indeed, if $\|f' -f\|_{C^0(K)}\leq \tfrac{(1-L)\eps}{2}$, then, for $z\notin K_\eps$ and $y\in K$, we have
\begin{align*}
|h(z)-f'(y)| & \leq |h(z)-f(y)| + |f(y)-f'(y)|\\
& \leq L\,|z-y|+ \frac{1-L}{2}\,\eps \leq \frac{1+L}{2} |z-y|,
\end{align*}
thus implying that $f'\in \cG_\eps$ because $\frac{1+L}{2} < 1$. On the other hand, $\cG_\eps$ is also dense. Indeed, as a consequence of Kirszbraun's theorem all strictly short maps from $K$ to $\R^n$ belong to $\cG_\eps$, and the set of strictly short maps on a compact set is dense in the set of short maps (indeed, given $f \in \cX(K, \R^n)$, $\lambda\,f$ with $\lambda <1$ is strictly short and converges uniformly to $f$ as $\lambda$ tends to $1$).

We show that the residual set
\[
\cG := \bigcap_{\Q \ni \eps>0} \cG_\eps
\]
is made of LSSE maps, thus proving the proposition. Indeed, let $g \in \cG$. By definition, for every $\eps_k=2^{-k}$ there exists a function $h_k:\R^n \setminus K_{\eps_k}\to\R^n$ satisfying \eqref{e.distance}. Let $H_k$ be the Kirszbraun extension (i.e.~with optimal Lipschitz constant) of the map
\[
K \cup { \left(\R^n\setminus K_{\eps_k}\right)} \ni
x \mapsto 
 \begin{cases}
  h_k(x) & \text{if }\, x\in { \left(\R^n\setminus K_{\eps_k}\right)},\\
  g(x) & \text{if }\, x\in K.
 \end{cases}
\]
Note that by \eqref{e.distance} the maps $H_k$ are short.
Set
\[
f:=\sum_k 2^{-k}H_k.
\]
The function $f$ is a locally strictly short extension of $g$. Indeed, by construction $\Lip(f)\leq 1$ and $f\vert_K=g$. Moreover, for every open set $B$ with $B\cap K =\emptyset$, $\Lip(f\vert_B) <1$ because $\Lip(h_k)<1$ for every $k$ such that $B \subset { \left(\R^n\setminus K_{\eps_k}\right)}$.
\end{proof}

\begin{proof}[Proof of Theorem~\ref{t.restriction}]
Recall from Section \ref{s:density} that for every $x_i\neq x_j \in \Q^n$ and $\eps, R >0$ the set $E_\eps(x_i, x_j, \bar B_R)$ is defined as 
\[
E_\eps(x_i, x_j, \bar B_R) := \{ h \in \cX(K,\R^n) \; :\; \exists\; f \in F_\eps(x_i,x_j,\bar B_R)\textrm{ s.t. } f\vert_K = h \}.
\]
By Lemma~\ref{l.closed} and the openness of the restriction map (see \cite[Theorem~2.2]{Kopecka}), $E_\eps(x_i, x_j, \bar B_R)$ are open subsets of $\cX(K,\R^n)$.
Moreover, by Proposition~\ref{p:density}, these sets are also dense.

Let $\cL$ be the set of LSSE maps $g:K \to \R^n$ and recall that $\cL$ is residual in $\cX(K,\R^n)$ by Proposition~\ref{p.LSSE}. We claim that every map in the residual set
\begin{equation*}
\cF := \cL \cap \bigcap_{x_i\neq x_j\in \Q^n} \bigcap_{\Q\ni\eps>0} \bigcap_{R\in \N\setminus \{0\}} E_\eps(x_i, x_j, \bar B_R)
\end{equation*}
satisfies the conclusion of the theorem, i.e.~is the restriction of an isometric map of the entire space.

To show this, let $f \in \cF$. In view of Theorem~\ref{t.extension} and Proposition~\ref{p.lsse condition}, there exists an extension $F:\R^n\to\R^n$ of $f$ such that $F\vert_{\R^n\setminus K}\in \cI(\R^n\setminus K)$. We want to prove that actually $F\in \cI(\R^n)$.

Fix any curve $\gamma:[0,1]\to\R^n$. We can assume without loss of generality that $\gamma$ is parametrized by arc-length. 
Set 
$$U:=\gamma^{-1}(\R^n\setminus K)\textrm{ and }V:=\gamma^{-1}(K).
$$
Since $F\vert_{\R^n\setminus K}\in \cI(\R^n\setminus K)$, it follows that
$|(F\circ \gamma)'| =1$ for a.e.~$t\in U$.
We need only to show that $|(F\circ \gamma)'| =1$ for a.e.~$t\in V$.

We argue by contradiction. Assuming the above claim is false: there exists a compact set $W \subset V$ and $0<\eta<1$ such that
\[
\mathcal{L}^1(W)>2\,\eta \quad \text{and} \quad
|(F\circ \gamma)'| = |(f\circ \gamma)'| < 1-2\, \eta
\; \text{ for a.e. } t\in W.
\]
It then follows that
\begin{equation}\label{e:contradiction}
\int_0^1 |(F\circ \gamma)'(t)| \, dt
\leq 1 - 2\, \eta + (1-2\,\eta) \, 2\, \eta = 1 - 4\, \eta^2.
\end{equation}
Consider next a partition $t_0=0 <t_1 < \ldots < t_m =1$ such that
\begin{gather}
\gamma(t_i) \neq \gamma(t_{i-1}) \quad\forall \; i \in \{1, \ldots, m\}\label{e:partition1}\\
\sum_{i=1}^m |\gamma(t_i) - \gamma(t_{i-1})| > 1 - \eta^2.\label{e:partition3}
\end{gather}
Then, by elementary algebra, from \eqref{e:contradiction}, \eqref{e:partition1} and \eqref{e:partition3} it follows that
\begin{align*}
\min_{i \in \{1, \ldots, m\}} \frac{\ell(F\circ \gamma\vert_{[t_{i-1},t_{i}]})}{|\gamma(t_i) - \gamma(t_{i-1})|}
\leq \frac{\sum_{i=1}^m
\ell(F\circ \gamma\vert_{[t_{i-1},t_{i}]})}
{\sum_{i=1}^m|\gamma(t_i) - \gamma(t_{i-1})|}
\leq \frac{1-4\, \eta^2}{1 - \eta^2} < 1 - 3\, \eta^2.
\end{align*}
Let $j \in \{1, \ldots, m\}$ be such that
\begin{equation}\label{e:bo}
\frac{\ell(F\circ \gamma\vert_{[t_j , t_{j-1}]})}{|\gamma(t_j) - \gamma(t_{j-1})|} < 1 - 3\, \eta^2.
\end{equation}
Fix next $\eps>0$ satisfying the following conditions:
\begin{gather}
\eps\leq \eta^2 |\gamma(t_j) - \gamma(t_{j-1})| \label{e:eps1}\\
\eps\left(1+ \frac{\ell(\gamma) + \eps}{|x_j - x_{j-1}|} \right) \leq \eta^2\label{e:eps2}.
\end{gather}
Consider two points $x_j$ and $x_{j-1} \in \Q^n$ such that
\begin{equation}\label{e:x_j}
|\gamma(t_j) - x_j| + |\gamma(t_{j-1})- x_{j-1}| \leq \eps
\end{equation}
and, since $f \in \cF$, a function $\bar F \in F_\eps(x_j,x_{j-1},\bar B_R)$ such
that $\bar F\vert_K = f$.
Then, since $\bar F \vert_K = F \vert_K$ and $|(\bar F \circ \gamma)'\vert_U| \leq 1 = |(F \circ \gamma)'\vert_U|$, we deduce from \eqref{e:bo} that
\begin{equation}\label{e:contraction}
\frac{\ell(\bar F\circ \gamma\vert_{[t_{j-1},t_j]})}{|\gamma(t_j) - \gamma(t_{j-1})|} < 1 - 3\, \eta^2
\end{equation}
Let $\bar\gamma$ be the curve
obtained concatenating the straight segment from $x_{j-1}$ to $\gamma(t_{j-1})$,
$\gamma\vert_{[t_{j-1},t_j]}$ and the straight segment from $\gamma(t_j)$ to $x_j$, i.e.
\[
\bar \gamma := [\gamma(t_{j-1}), x_{j-1}] \cdot \gamma\vert_{[t_{j-1},t_j]} \cdot [x_j, \gamma(t_j)].
\]
Then we calculate:
\begin{align}\label{e:finale}
\frac{\ell(\bar F\circ \bar \gamma)}{|x_j - x_{j-1}|} &
\stackrel{\eqref{e:x_j}}{\leq} \frac{\ell(\bar F\circ {\gamma\vert_{[t_{j-1},t_j]}} + \eps)}{|\gamma(t_j) - \gamma(t_{j-1})| {-} \eps}\notag\\
& \leq \left(\frac{\ell(\bar F\circ \gamma\vert_{[t_{j-1},t_j]})}{|\gamma(t_j) - \gamma_(t_{j-1})|} + 
\frac{\eps}{|\gamma(t_j) - \gamma(t_{j-1})|}\right) \left( \frac{|\gamma(t_j) - \gamma(t_{j-1})|}{|\gamma(t_j) - \gamma(t_{j-1})| - \eps}\right)\notag\\
& \stackrel{\eqref{e:bo} + \eqref{e:eps1}}{\leq} 
\left(1-3\,\eta^2 + \eta^2\right) \frac{1}{1-\eta^2} < 1- \eta^2\notag\allowdisplaybreaks\\
&\stackrel{\eqref{e:eps2}}{\leq} 
1 - \eps\left(1+ \frac{\ell(\gamma\vert_{[t_{j-1},t_j]})+\eps}{|x_j-x_{j-1}|}\right)\notag\\
&\stackrel{\eqref{e:x_j}}{\leq} 
 1 - \eps - \frac{\eps\, \ell(\bar \gamma)}{|x_j-x_{j-1}|}.
\end{align}
On the other hand \eqref{e:finale} implies that $\bar F \notin F_\eps(x_j, x_{j-1}, \bar B_R)$, which is the desired contradiction.
\end{proof}


\section{Isometric embedding of Riemannian manifolds}
Now we proceed with the proof of Theorem~\ref{t.immersion}.
In this section $M$ is a smooth manifold of dimension $n$ (with or without boundary) and $g \in \mathcal{T}^2(M)$ is a continuous Riemannian metric (i.e.~a symmetric and positive definite $2$-tensor field).

\subsection{Locally strictly short maps}
The following general density result is used in the proof of Theorem~\ref{t.immersion}.
Denote by $\Lip_{<1,\textup{loc}}(M,\R^n)$ the space of \textit{locally strictly short maps}:
\[
\Lip_{<1,\textup{loc}}(M,\R^n)=\big\{f\in \cX(M,\R^n)\,:\,\Lip(f|_A)<1\quad\forall\,A\subset\subset
M\big\}.
\]

\begin{lemma}\label{l:density}
The set of locally strictly short maps $\Lip_{<1,\textup{loc}}(M,\R^n)$ is dense in $\cX(M,\R^n)$.
\end{lemma}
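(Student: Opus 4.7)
The plan is to produce, for each $f \in \cX(M,\R^n)$ and each $\eta > 0$, an explicit $h \in \Lip_{<1,\textup{loc}}(M,\R^n)$ with $D(f,h) \leq \eta$ by postcomposing $f$ with a carefully chosen Euclidean map $\Phi_\eta : \R^n \to \R^n$. Specifically, I will construct a smooth $\Phi_\eta$ enjoying three properties simultaneously: $\Lip(\Phi_\eta) \leq 1$ globally on $\R^n$; $\Lip(\Phi_\eta|_{B_R}) < 1$ for every ball $B_R \subset \R^n$; and $\|\Phi_\eta - \textup{id}\|_{C^0(\R^n)} \leq \eta$. Setting $h := \Phi_\eta \circ f$, the first property forces $\Lip_g(h) \leq 1$, so $h \in \cX(M,\R^n)$; for any $A \subset\subset M$ the image $f(\overline{A})$ is compact and hence contained in some ball $B_R$, so the second property yields $\Lip_g(h|_A) \leq \Lip(\Phi_\eta|_{B_R})\,\Lip_g(f|_A) < 1$, giving $h \in \Lip_{<1,\textup{loc}}(M,\R^n)$; and the third property gives $|h(x) - f(x)| \leq \eta$ for every $x \in M$, hence $D(f,h) \leq \eta$.

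The key insight is that neither a uniform rescaling $y \mapsto (1-\eta)y$ (which is strictly short globally but fails to be $C^0$-close to the identity for large $|y|$) nor adding an arbitrary small bounded perturbation to the identity (which typically fails to be $1$-Lipschitz) works on its own. The right construction is to subtract from the identity a bounded smooth vector field whose Jacobian is symmetric positive semi-definite with eigenvalues in $(0,1]$; then $\nabla \Phi_\eta = I - \eta B$ with $B \geq 0$ symmetric has all eigenvalues in $[1-\eta, 1)$. A concrete workable choice is
\[
\Phi_\eta(y) := y - \eta\, \frac{y}{\sqrt{1+|y|^2}}.
\]
A direct computation yields $\nabla \Phi_\eta(y) = I - \eta\, B(y)$ where $B(y)$ is symmetric with eigenvalues $(1+|y|^2)^{-1/2}$ of multiplicity $n-1$ (in directions perpendicular to $y$) and $(1+|y|^2)^{-3/2}$ (in the direction of $y$). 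All eigenvalues of $B(y)$ lie in $(0,1]$ and are uniformly bounded below on $B_R$ by $(1+R^2)^{-3/2} > 0$, while $|\Phi_\eta(y) - y| = \eta\,|y|/\sqrt{1+|y|^2} \leq \eta$ for every $y$.

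The main obstacle is identifying a map $\Phi_\eta$ that realizes all three properties simultaneously; once it is in hand, each property is a straightforward direct calculation and the composition argument for $h := \Phi_\eta \circ f$ is immediate from the compactness of $f(\overline{A})$ for relatively compact $A$.
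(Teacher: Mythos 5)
Your proof is correct, but it takes a route that is genuinely different in its mechanism from the paper's. The paper constructs the approximation directly on the source side: after fixing a base point $p_0\in M$ and normalizing $f(p_0)=0$, it sets $h(p)=f(p)\bigl(1-\tfrac{\eps}{1+d_M(p,p_0)}\bigr)$ and verifies the local strict shortness by a direct Lipschitz estimate on the difference quotient using $|f(p)|\le d_M(p,p_0)$. You instead work purely on the target side: you build a single Euclidean self-map $\Phi_\eta$ (depending only on $\eta$ and $n$, not on $M$, $g$, or $f$) which is $1$-Lipschitz globally, strictly contracting on every ball, and $C^0$-close to the identity, and then set $h=\Phi_\eta\circ f$. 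Your eigenvalue computation for $\nabla\Phi_\eta(y)=I-\eta\,B(y)$ is right: $B(y)$ has eigenvalues $(1+|y|^2)^{-1/2}$ (multiplicity $n-1$) and $(1+|y|^2)^{-3/2}$, so the operator norm of $\nabla\Phi_\eta(y)$ is $1-\eta(1+|y|^2)^{-3/2}$, giving $\Lip(\Phi_\eta|_{B_R})\le 1-\eta(1+R^2)^{-3/2}<1$ by convexity of $B_R$; and $|\Phi_\eta(y)-y|\le\eta$. The composition step then goes through because for $A\subset\subset M$ the compact image $f(\bar A)$ sits inside some $B_R$.

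What each approach buys: your version is more modular and reusable, since $\Phi_\eta$ is a fixed Euclidean map applicable to any short $f$ into $\R^n$, and the manifold structure enters only through $\Lip_g(f)\le 1$; the paper's version avoids the Jacobian/spectral computation and needs only an elementary one-line Lipschitz estimate, at the cost of a base-point choice and a translation normalization. Both arguments are of comparable length and yield the same conclusion; in spirit they are the same ``shrink-by-a-decaying-factor'' trick, but the decay factor is measured in $d_M(\cdot,p_0)$ for the paper and in $|f(\cdot)|$ for you, and the organization of the proof is different enough to be worth noting.

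One small remark for polish: your claim that the eigenvalues of $\nabla\Phi_\eta$ lie in $[1-\eta,1)$ is accurate, but what matters for the Lipschitz estimate is only the upper bound $1-\eta(1+R^2)^{-3/2}$ on $B_R$; you may as well state that directly and skip the lower bound, which plays no role.
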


\begin{proof}
For every short map $f\in \cX(M,\R^n)$ and every $\eps>0$, we show that there
exists $h\in \Lip_{<1,\textup{loc}}(M,\R^n)$ such that $D(f,h)\leq \eps$.
Fix a point $p_0\in M$.
Without loss of generality, we may assume that $f(p_0)=0$.
We claim that the map
\[
h(p)=f(p)\left(1-\frac{\eps}{1+d_M(p,p_0)}\right)
\]
fulfills the requirements.
Observe first that
\[
D(h,f) = \sup_{p\in M}\frac{\eps\,|f(p)|}{1+d_M(p,p_0)}
= \sup_{p\in M^n}\frac{\eps\,|f(p)-f(p_0)|}{1+d_M(p,p_0)}
\leq\frac{\eps\,d_M(p,p_0)}{1+d_M(p,p_0)}\leq\eps.
\]
Therefore, we need only to show that $h\in \Lip_{<1,\textup{loc}}(M,\R^n)$.
To this end, setting for brevity of notation
$d(p)=d_M(p,p_0)$, we notice that for any $p,q\in M$,
\begin{align*}
h(p)-h(q)&=(f(p)-f(q))\left(1-\frac{\eps}{1+d(p)}\right)-
f(q)\left(\frac{\eps}{1+d(p)}-\frac{\eps}{1+d(q)}\right)\\
&=(f(p)-f(q))\left(1-\frac{\eps}{1+d(p)}\right)-
f(q)\,\frac{\eps\,(d(q)-d(p))}{(1+d(p))\,(1+d(q))}.
\end{align*}
Hence, it follows that
\begin{align}\label{e:est lip}
|h(p)-h(q)|&\leq
d_M(p,q)\,\left(1-\frac{\eps}{1+d(p)}+\frac{\eps\,d(q)}{(1+d(p))\,(1+d(q))}
\right)\notag\\
&=d_M(p,q)\,\left(1-\frac{\eps}{(1+d(p))\,(1+d(q))}
\right).
\end{align}
Given any compact set $A\subset\subset M$, there exists $C>0$ such that
$\sup_{p\in A} d_M(p,p_0)\leq C$.
It follows from \eqref{e:est lip} applied to $p,q\in A$ that
\[
\Lip(h\vert_A)\leq 1-\frac{\eps}{(1+C)^2}<1
\]
thus implying $h\in \Lip_{<1,\textup{loc}}(M,\R^n)$.
\end{proof}

\subsection{Local bi-Lipschitz approximations}
For the proof of Theorem~\ref{t.immersion} we need also the following simple technical lemma.

\begin{lemma}\label{l.local covering}
Let $(B, h)$ be a Riemannian manifold with continuous metric tensor $h$, where
$B \subset \R^n$ denotes either the ball $B_2$ centered at the origin or the half ball $B_2 \cap \{x_n \geq 0 \}$.
For every $\beta>0$ there exists $r \in (0,1)$ with this property:
for every $p \in \bar B_1 \cap B$ there exists a diffeomorphism $\Phi:B_r(p) \to U$ for some convex open set $U \subset \R^n$ such that $\Phi^* g_0 = h(p)$ with $g_0$ the standard flat Euclidean metric of $\R^n$ and $\Phi$ is bi-Lipschitz
with
\begin{equation}\label{e:bi-Lip}
\Lip(\Phi) \leq 1+ \beta \quad \text{and} \quad \Lip(\Phi^{-1}) \leq 1+ \beta.
\end{equation}
\end{lemma}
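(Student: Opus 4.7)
The plan is to let $\Phi$ be the affine linear map that pointwise flattens the metric $h$ at the base point $p$, and to use the continuity of $h$ on the compact set $\bar B_1\cap B$ to guarantee that the frozen metric $h(p)$ remains a good approximation of $h$ throughout a neighborhood of uniform radius. Concretely, for each $p$ I would set $A_p:=h(p)^{1/2}$, the symmetric positive-definite square root of the Gram matrix of $h(p)$ in the standard basis, and define
\[
\Phi_p(x):=A_p(x-p),\qquad U:=\Phi_p(B_r(p)\cap B).
\]

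First, I would use uniform continuity of $h$ on the compact set $\bar B_1\cap B$ (together with the fact that the eigenvalues of $h$ are there uniformly bounded above and away from zero) to pick a single $r\in(0,1)$ such that for every $p\in\bar B_1\cap B$ and every $x\in\overline{B_r(p)}\cap B$ the quadratic-form estimate
\[
(1+\beta)^{-2}\,h(p)\leq h(x)\leq (1+\beta)^{2}\,h(p)
\]
holds. Next I would note that $B_r(p)\cap B$ is convex (an intersection of two convex sets, valid whether $B$ is the full ball or the half-ball), hence $U$ is convex as the linear image of a convex set, and $\Phi_p$ is a smooth diffeomorphism onto $U$. Since $d\Phi_p\equiv A_p$ and $A_p^TA_p=h(p)$, the identity $\Phi_p^*g_0=h(p)$ is immediate.

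The bi-Lipschitz estimate would then follow by comparing with Euclidean segments: for $x,y\in B_r(p)\cap B$ the segment from $x$ to $y$ stays in $B_r(p)\cap B$ by convexity and its $h$-length is at most $(1+\beta)$ times its $h(p)$-length, which equals $|\Phi_p(y)-\Phi_p(x)|$; this yields $d_h(x,y)\leq(1+\beta)|\Phi_p(y)-\Phi_p(x)|$. Conversely, any curve in $B_r(p)\cap B$ joining $x$ and $y$ has $h$-length at least $(1+\beta)^{-1}$ times its $h(p)$-length, and the latter dominates the straight-line $h(p)$-distance $|\Phi_p(y)-\Phi_p(x)|$, so $|\Phi_p(y)-\Phi_p(x)|\leq(1+\beta)\,d_h(x,y)$. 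Together these give \eqref{e:bi-Lip}.

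I do not expect any serious obstacle: the argument is essentially routine once the correct $\Phi_p$ is written down. The only point requiring a little care is the boundary case, where $p$ may lie on $\partial B$ when $B$ is the half-ball; then $B_r(p)\cap B$ is a half-lens rather than a full ball, but it is still convex, so both the pullback identification and the straight-line Lipschitz estimate go through unchanged.
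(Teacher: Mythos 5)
Your proof is in outline the same as the paper's: freeze the metric at $p$, take the linear map with derivative the symmetric square root $A_p=h(p)^{1/2}$ (so that $\Phi_p^*g_0=h(p)$ is immediate), use uniform continuity of $h$ on the compact set $\bar B_1\cap B$ to obtain the two-sided quadratic-form comparison $(1+\beta)^{-2}h(p)\leq h\leq(1+\beta)^2 h(p)$ at a uniform scale $r$, and deduce the bi-Lipschitz estimates from this comparison. Your choice $A_p=h(p)^{1/2}$ is cleaner than the spectral-decomposition formula in the paper, and your remark that $B_r(p)\cap B$ is convex handles the half-ball case transparently. The easy direction $d_h(x,y)\leq(1+\beta)\,|\Phi_p(x)-\Phi_p(y)|$, i.e.\ $\Lip(\Phi_p^{-1})\leq 1+\beta$, obtained by testing with the Euclidean segment, is correct as written.

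The step asserting $\Lip(\Phi_p)\leq 1+\beta$, however, has a gap. You need $|\Phi_p(x)-\Phi_p(y)|\leq(1+\beta)\,d_h(x,y)$, and $d_h(x,y)$ is by definition the infimum of $\ell_h(\gamma)$ over \emph{all} rectifiable curves $\gamma$ in $B$ from $x$ to $y$, not only over those contained in $B_r(p)\cap B$. Your lower bound $\ell_h(\gamma)\geq(1+\beta)^{-1}\ell_{h(p)}(\gamma)\geq(1+\beta)^{-1}|\Phi_p(x)-\Phi_p(y)|$ is only valid for curves staying in $B_r(p)\cap B$, because that is where the two-sided comparison is available; in general $B_r(p)\cap B$ is not geodesically convex in $(B,h)$, so a near-minimizer may leave it, and what your argument actually controls is the intrinsic distance of $(B_r(p)\cap B,h)$, which dominates $d_h(x,y)$ but need not equal it. The paper avoids this by formulating the comparison for all pairs in $\bar B_1\cap B$ with $d_h$-distance at most $4r$ and then checking that any curve $\gamma$ with $\ell_h(\gamma)\leq d_h(x,y)+\eta$ satisfies $d_h(p,\gamma(t))\leq 4r$ for all $t$, so the comparison holds at every point along the near-minimizer. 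To repair your Euclidean formulation you should either switch to that $d_h$-based statement, or establish the comparison on a larger ball $B_{\rho r}(p)\cap B$ with $\rho>1$ fixed in terms of the uniform ellipticity constants of $h$ on $\bar B_1\cap B$, chosen so that any competitor leaving $B_{\rho r}(p)$ automatically has $h$-length exceeding $(1+\beta)\sup_p\|A_p\|\cdot 2r\geq(1+\beta)|\Phi_p(x)-\Phi_p(y)|$ and can be discarded from the infimum.
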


\begin{proof}
Let $G: B \to \R^{n \times n}_{\textup{sym},+}$ be the matrix-field corresponding to the metric tensor $h$: namely, $\R^{n \times n}_{\textup{sym},+}$ denotes the positive definite symmetric $n \times n$ matrices such that
\[
h(v,w) = (G\,v) \cdot w
\]
where we recall $\cdot$ is the standard scalar product in $\R^n$.
By the continuity of $G$ and the compactness of $\bar B_1 \cap B$, there exists $r \in (0,1)$ such that 
\begin{equation}\label{e:g cont}
\frac{G(y)}{(1+\beta)^2} \leq G(x) \leq (1+\beta)^2\, G(y) \quad \forall \; x,y \in \bar B_1 \cap B, \; d_h(x,y) \leq 4\,r
\end{equation}
where the above inequalities are meant in the sense of quadratic forms.

Fix now any $p \in \bar B_1 \cap B$. By the spectral theorem we can find $R \in O(n)$ and $D \in \R^{n \times n}$ a positive definite diagonal matrix such that $G(p) = R^T D^2 R$.
We can then define $\Phi$ to be the linear map
$\Phi(x) := L(x - p)$ where $L = R^T D^{-1}$.
Clearly $U := \Phi(B_r(p))$ is convex and it is very simple to verify that $\Phi^* g_0 = h(p)$: indeed for every $v, w \in \R^n$ 
\begin{align*}
h(p)\big(D\Phi(p) v,  D\Phi(p) w \big) & = G(p) R^T D^{-1} v \cdot  R^T D^{-1} w = v \cdot w.
\end{align*}
In order to estimate the Lipschitz constant of $\Phi$, consider two points $x, y \in B_r(p)$, $0<\eta <r$ arbitrary and $\gamma \in \Gamma(x,y,B)$ such that $\ell_h(\gamma) \leq  d_h(x,y) + \eta$.
Then for every $t \in [0,1]$ we have
\begin{align*}
d_h(p, \gamma(t)) & \leq d_h(p, \gamma(0)) + d_h(\gamma(0), \gamma(t))
\leq d_h(p, x) + \ell_h(\gamma)\\
& \leq d_h(p, x) + d_h(x, y) +\eta\leq 4\,r.
\end{align*}
Hence \eqref{e:g cont} is applicable and implies that $h(p) \leq (1+\beta)^2 h(\gamma(t))$ as quadratic forms, or equivalently $g_0 \leq (1+\beta)^2 (\Phi^{-1})^* h(\gamma(t))$.
One can therefore estimate
\begin{align}
|\Phi(x) - \Phi(y)| & \leq \ell_{g_0}(\Phi \circ \gamma) = \int_0^1 |(\Phi\circ \gamma)'(t)| \,dt\notag\\
& \leq (1+\beta)\int_0^1 |\gamma'(t)|_{h(\gamma(t))} \,dt = (1+\beta) \ell_h(\gamma)\notag\\
& \leq (1+\beta) \big(d_h(x,y) + \eta \big).
\end{align}
Since $\eta>0$ arbitrary, we conclude that $\Lip (\Phi) \leq 1+\beta$.
Vice versa we can consider two points $z, w \in U$ and the straight line $\sigma:[0,1] \to U$ connecting $z$ to $w$ (note $\sigma([0,1]) \subset U$).
Arguing as before, from \eqref{e:g cont} we have that $ h(\gamma(t)) \leq (1+\beta)(\Phi)^* g_0$ from which
\begin{align}
d_h(z,w) & \leq \ell_{h}(\Phi^{-1} \circ \sigma) = \int_0^1 |(\Phi^{-1}\circ \sigma)'(t)|_{h(\Phi^{-1}\circ \sigma(t))} \,dt\notag\\
& \leq (1+\beta)\int_0^1 |\sigma'(t)| \,dt = (1+\beta) |z-w|
\end{align}
i.e.~$\Lip(\Phi^{-1}) \leq 1+\beta$.
\end{proof}

\subsection{Proof of Theorem~\ref{t.immersion}}
We fix a smooth atlas $\{(A_i,\ph_i)\}_{i\in\N}$ of $M$
with the following properties:
\begin{itemize}
\item[(a)] $A_i\subset\subset M$;
\item[(b$_1$)] $\ph_i(A_i)= B_2\subset\R^n$ if $A_i\cap\de M=\emptyset$;
\item[(b$_2$)] $\ph_i(A_i)=B_2\cap\{x_n\geq0\}\subset\R^n$ if $A_i\cap\de M \neq \emptyset$;
\item[(c)] $\cup_{i\in\N}\ph_i^{-1}(B_1)=M$.
\end{itemize}
Set $C_i=\ph_i^{-1}(\bar B_1)$ and note that $C_i$ is compact in
$M$.
By Lemmas~\ref{l.iso}, \ref{l.countable},  \ref{l.closed} and \ref{l.local} we have that
\[
\cI_{M}=\bigcap_{i\in\N}\bigcap_{k\in\N}\bigcap_{x\neq y\in
D_i}F_{\frac{1}{k}} (x,y,C_i)
\]
where $D_i=\ph_i^{-1}(\Q^n\cap \bar B_1)$.
It is then enough to show that
$F_{\eps}(x,y,C_i)$ is dense in $\cX(M,\R^n)$ for every $\eps>0$ and every $x,y \in C_i$.
To simplify the notation, since from now on the subindex $i$ is fixed, we drop
it and, moreover, we write $B$ for either $B_2$ or $B_2\cap\{x_n\geq0\}$,
according to the case occurring in (b$_1$) or (b$_2$).

\medskip

We have then fixed the following notation:
\[
A \subset M, \quad \ph:A \to B \quad \text{and}\quad \ph^{-1}(\bar B_1 \cap B) = C.
\]
We have to show that, given $f\in \cX(M,\R^n)$ and $\eta>0$,
there exists $F\in F_{\eps}(x,y,C)$ such that $D(F,f)\leq\eta$.
We divide the proof in different steps.

\medskip

\noindent\textbf{Step 1: locally strictly short approximation.}
Recalling that by Lemma~\ref{l:density} the inclusion
\[
\Lip_{<1,\textup{loc}}(M,\R^n) \subset \cX(M, \R^n)
\]
is dense, we then find $f_0 \in \Lip_{<1,\textup{loc}}(M,\R^n)$ such that
$D(f_0,f) \leq \tfrac{\eta}{2}$.
By the definition of $\Lip_{<1,\textup{loc}}(M,\R^n)$, there exists $\alpha>0$ such that $\Lip(f_0\vert_{C})\leq 1-\alpha$. Clearly, there is no loss of generality in assuming that $\alpha<\eps$.

\smallskip

\noindent\textbf{Step 2: local bi-Lipschitz approximations.}
Let $\beta>0$ be a parameter to be fixed later and $h := (\ph^{-1})^* g$ the pull-back metric.
One can then apply Lemma~\ref{l.local covering} to $(B,h)$ and find $r>0$ which satisfies the conclusion therein.
By a simple volume argument (recall that $B$ is either $B_2$ or the half ball $B_2 \cap \{x_n \geq 0\}$) there exists a constant $N=N(n)$ depending
only on the dimension $n$, in particular not on $r$, such that we can cover
$B$ by $N$ families of pairwise disjoint open balls of radius $r$. More precisely, for $l =1, \ldots, N$ there exists $\cF_l=\{B_{r}(p_{l,i})\}_{i=1}^{m(l)}$ for some $m(l) \in \N$ and $p_{l,i} \in \bar B_1 \cap B$, such that
\[
B_{r}(p_{l,i})\cap B_{r}(p_{l,j})=\emptyset\quad \forall\; i\neq j
\quad\text{and}\quad B\subset\bigcup_{l=1}^N\bigcup_{i=1}^{m(l)} B_{l,i}.
\]
For every pair $(l, i)$ above we let $\Phi_{l,i}: B_r(p_{l,i}) \to U_{l,i} \subset \R^n$ be the bi-Lipschitz diffeomorphism given in Lemma~\ref{l.local covering}, and we set $A_{l,i} := \ph^{-1} (B_r(p_{l,i}))$.

\smallskip

\noindent\textbf{Step 3: iterative procedure.}
We construct the map $F:M \to \R^n$ as the result of an iterative procedure
which leads to a sequence of maps $f_0,f_1,\ldots,f_N:M\to\R^n$ (where $N$ is the number of the families of the covering in the previous step) such that $F = f_N \in F_\eps(x, y, C)$.

We set $\theta=\tfrac{\eta}{2N}$ and $f_0$ given in Step 1,
and construct the functions $f_1,\ldots,f_N$
recursively satisfing the following:
\begin{gather}
\Lip_g(f_k)\leq (1+\beta)^{3k}(1-\alpha)\label{e:step k_1}\\
D(f_k,f_0)\leq k\,\theta\label{e:step k_3/2}\\
\ell(f_k\circ \gamma)\geq
(1+\beta)\,(1-\alpha)\,\ell_g(\gamma)\label{e:step k_2}
\end{gather}
for every $k\geq1$ and every rectifiable curve $\gamma:[0,1]\to\cup_{l\leq
k}\cup_{i}A_{l,i}\subset C$.

Note that \eqref{e:step k_1} and \eqref{e:step k_3/2} are clearly satisfied by $f_0$.
Given $f_{k-1}$ satisfying \eqref{e:step k_1}, \eqref{e:step k_3/2} and \eqref{e:step k_2} (only if $k\geq1$),
we construct $f_{k}$ in the following way.
We consider the balls $B_r(p_{k,i})$ of Step 2 and set
$\psi_{k,i}:U_{k,i}\to\R^n$ given by
\[
\psi_{k,i}=f_{k-1}\circ \Phi_{k,i}^{-1}.
\]
Using the bound on the Lipschitz constant of $\Phi_{k,i}^{-1}$ in \eqref{e:bi-Lip} and \eqref{e:step k_1}, one can verify that
\begin{align*}
\Lip(\psi_{k,i})&\leq (1+\beta)^{3k-2}(1-\alpha).
\end{align*}
Hence we can use Corollary~\ref{c:dir} and construct a map $\chi_{k,i}:U_{k,i}\to\R^n$ such that
\begin{align}
\Lip(\chi_{k,i})&\leq
(1+\beta)\,\Lip(\psi_{k,i})=(1+\beta)^{3k-1}(1-\alpha),\label{e:chi lip}\\
\chi_{k,i}\vert_{\de U_{k,i}}&=\psi_{k,i}\vert_{\de U_{k,i}},\quad
\|\chi_{k,i}-\psi_{k,i}\|_{C^0(U_{k,i})}\leq\theta,\label{e:chi close}
\end{align}
and for every rectifiable curve $\tilde \gamma:[0,1]\to U_{k,i}$
\begin{equation}\label{e:chi iso}
\ell(\chi_{k,i}\circ\tilde\gamma)=(1+\beta)^{3k-1}(1-\alpha)\,\ell(\tilde\gamma).
\end{equation}
Then, we set $f_k:M\to\R^n$,
\[
f_k(x)=
\begin{cases}
f_{k-1}(x)&\text{if }\, x\in M\setminus \cup_{i=1}^{m(k)} A_{k,i},\\
\chi_{k,i}\circ\Phi_{l,i}(x) & \text{if }\, x\in A_{k,i}\;\text{for some }i=1, \ldots, m(k).
\end{cases}
\]
By \eqref{e:chi close} and the fact that the $\{A_{k,i}\}_{i}$ are disjoint open sets, $f_k$ is well-defined and satisfies \eqref{e:step
k_3/2} by triangular inequality.
Moreover \eqref{e:step k_1} follows from \eqref{e:bi-Lip} and \eqref{e:chi lip} straightforwardly.
For what concerns \eqref{e:step k_2} we argue as follows. Consider
$\gamma:[0,1]\to\cup_{l\leq k}\cup_iA_{l,i}$ rectifiable.
Set $I=\gamma^{-1}(\cup_i A_{k,i})$. Since the sets $A_{l,i}$ are open and disjoint,
$I$ is relatively open in $[0,1]$ and we can write $I=\cup_i J_i$ with $J_i$ disjoint relatively open sets such that $\gamma(J_i)\subset
A_{k,i}$ for every $i$.
Setting $\tilde \gamma_i=\Phi_{k,i}\circ \gamma\vert_{J_i}$, it follows
from the definition of $f_k$ that
\begin{align*}
\ell(f_k\circ\gamma\vert_{J_i}) & =\ell(\chi_i\circ \tilde\gamma_i)
\stackrel{\eqref{e:chi iso}}{=}
(1+\beta)^{3k-1}(1-\alpha)\,\ell(\tilde\gamma_i)\\
&\stackrel{\eqref{e:bi-Lip}}{\geq}
(1+\beta)^{3k-2}(1-\alpha)\,\ell_g(\gamma_i).
\end{align*}
On the other hand, let $H\subset [0,1]\setminus I$ denote the set of points $t$
such that $I$ has Lebesgue density $0$ at $t$ and there exist $(f_k\circ\gamma)'(t)$, $(f_{k-1}\circ\gamma)'(t)$ with
\[
|(f_{k-1}\circ\gamma)'(t)|\geq (1+\beta)(1-\alpha)|\gamma'(t)|_g.
\]
Note that $H$ has full measure in $[0,1]\setminus I$ thanks to the assumption
of \eqref{e:step k_2} for $f_{k-1}$.
Since $f_k\circ\gamma\vert_H=f_{k-1}\circ\gamma\vert_H$, it follows
easily that, for every $t\in H$,
\[
|(f_k\circ\gamma)'(t)|=|(f_{k-1}\circ\gamma)'(t)|\geq 
(1+\beta)\,(1-\alpha)\,|\gamma'(t)|_g.
\]
Therefore, \eqref{e:step k_2} for $f_k$ follows from
\begin{align*}
\ell(f_k\circ \gamma)&=\sum_i\ell(f_k\circ \gamma\vert_{J_i})+\int_{[0,1]\setminus
I}|(f_k\circ\gamma)'(t)|\,dt\\
&\geq
(1+\beta)^{3k-2}\,(1-\alpha)\sum_i\ell_g(\gamma_i)+
(1+\beta)\,(1-\alpha)\int_{H}|\gamma'(t)|_g\,dt\\
&\geq
(1+\beta)\,(1-\alpha)\left(\sum_i\ell_g(\gamma_i)+
\int_{H}|\gamma'(t)|_g\,dt\right)=(1+\beta)\,(1-\alpha)\,\ell_g(\gamma).
\end{align*}

Clearly $F=f_N$ concludes the proof for
\[
0< \beta<\sqrt[3N]{\frac{1}{1-\alpha}}-1.
\]
Indeed, Step 1, \eqref{e:step k_3/2} and \eqref{e:step k_1} imply
$D(F,f)\leq \eta$ and $\Lip(F)\leq1$. Moreover Step 2, $\alpha < \eps$ and \eqref{e:step k_2} lead easily to $F\in F_{\eps}(x,y,C)$. \qed

\bibliographystyle{plain}
\bibliography{reference-iso}

\end{document}